\newtheorem{theorem}{Theorem}[section] %[chapter]
\newtheorem{remark}{Remark}[section] %[chapter]
\newtheorem{definition}{Definition}[section] %[chapter]
\newtheorem{proposition}{Proposition}[section] %[chapter]
\newtheorem{corollary}{Corollary}[section] %[chapter]
\newtheorem{lemma}{Lemma}[section] %[chapter]
\newcommand{\cqd}{{{\hfill $\square$}}\vspace{0.3cm}}
\def\href{}
\newcommand{\di}{\,\mathrm{dim}\,}
\newcommand{\inte}{\,\mathrm{int}\,}
\newcommand{\ke}{\,\mathrm{ker}\,}
\newcommand{\mo}{\,\mathrm{mod}\,}
\newcommand{\im}{\,\mathrm{Im}\,}
\newcommand{\R}{\mathbb{R}}
\newcommand{\re}{\mathbb{R}}
\newcommand{\N}{\mathbb{N}}
\newcommand{\Z}{{\mathbb{Z}}}
\newcommand{\tcc}{\mathcal{T}}
\newcommand{\me}{{\mathcal{M}}}
\def\dim{\operatorname{dim}}
\def\ss{\mathbb{S}^{2}\times \mathbb{S}^{1}}
\def\T{T^{2}_{g}}
\def\Te{\mathcal{T}_{g}}
\def\Tu{\mathcal{T}_{\widehat{g}}}
\begin{document}

\setlength{\baselineskip}{0.8\baselineskip}
\setlength{\parskip}{0.04\baselineskip}

\title{Smale Flows on $\ss$ }

\author{
 Ketty A. de Rezende\thanks{Partially supported by CNPq under grant 302592/2010-5 and by FAPESP under grant 2012/18780-0.} 
 \and Guido G. E. Ledesma\thanks{Supported by CNPq under grant 141717/2012-2} 
 \and Oziride Manzoli Neto\thanks{ Supported by FAPESP under grant 2012/24249-5.}
 }

\maketitle 
\begin{abstract}
In this paper, we use abstract Lyapunov graphs as a combinatorial tool to obtain a complete classification of Smale flows on $\ss$. This classification gives necessary and sufficient conditions that must be satisfied by an (abstract) Lyapunov graph in order for it to be associated to a Smale flow on $\ss$.
\end{abstract}
\section{Introduction}

In this article, we investigate the relationship between the topology and dynamics of Smale flows on $\ss$. A strategy that has been very successful in analyzing smooth flows on manifolds is to consider first, the local dynamics described by the chain recurrent behavior on the basic sets and secondly, to examine how these basic sets fit together.

Our approach will be qualitative in nature and make use of the methods introduced by Franks, \cite{franks1985nonsingular} and further developed by de Rezende, \cite{de1987smale}, in their study of Smale flows on $\mathbb{S}^3$ and by  Yu in \cite{yu2012} in his study of non-singular Smale flows on $\ss$. Our main contribution herein is to consider Smale flows on $\ss$ with singularities, which adds considerable difficulty to the embedding problem of the basic blocks containing the basic sets.

A qualitative analysis of a flow on a compact manifold $M$ can be divided into two parts, namely, into chain recurrent pieces and and orbits that exhibit gradient-like behaviour among these pieces. This was very nicely captured in a theorem of Conley's \cite{conley1978isolated}  that proves the existence of continuous  Lyapunov functions for continuous flows $\phi_{t}:M\rightarrow M$. Results of Wilson, \cite{wilson1969}, assert that it is possible to choose a smooth Lyapunov function $f:M\to \re$.

\begin{definition}
If $\phi_{t}:M\rightarrow M$ is a flow and $\epsilon>0,$ we say there is an $\epsilon$-chain from $x$ to $y$ provided that there exist points 
$x_{1}=x,x_{2},\dots,x_{n}=y$ and real numbers $t(i)>1$ such that 
$$d(\phi_{t(i)}(x_{i}),x_{i+1})<\epsilon$$
for all $1\leq i<n.$ A point $x$ is called \textbf{chain-recurrent} if for any $\epsilon>0$ there is an $\epsilon$-chain from $x$ to $x$. The set of chain recurrent points 
$\mathfrak{R}$ is called the \textbf{chain recurrent set}.
\end{definition}

It is easy to see that the chain recurrent set is compact and invariant under the flow.

\begin{definition}
If $\phi_{t}:M\rightarrow M$ is a smooth flow, then a smooth function $f:M\rightarrow \re$ will be called a \textbf{Lyapunov function} provided
\begin{enumerate}
\item $\frac{d}{dt}(f(\phi_{t}x))<0$ if it is not in the chain recurrent set $\mathfrak{R}$.
\item If $x,y\in \mathfrak{R}$ then $f(x)=f(y)$ if and only if for each $\epsilon>0$ there are $\epsilon$-chains from $x$ to $y$ and from $y$ to $x$.
\end{enumerate}
\end{definition}

The qualitative study of flows will first be local in nature, that is, it comprises the analysis of the dynamics of the chain recurrent pieces. In this work, for example,  dynamical-topological invariants associated to the basic sets will characterize the possible  basic blocks that contain them as maximal invariant sets. The second part, constitutes a global characterization of how these pieces fit together to form the manifold and realize a smooth flow  on it. 
The question of how the collection of the chain recurrent pieces fit together to form $\ss$  is, in fact, an embedding problem. In this work, as in \cite{de1987smale} and \cite{yu2012} we make use of a Lyapunov graph as a combinatorial tool which gives a global picture of the disposition of the basic sets within the manifold. 

For a Smale flow $\phi_{t}$ on a compact $3$-manifold $M$ with  Lyapunov function $f:M\rightarrow \re$ define the following equivalence relation on $M$: $x\sim_{f} y$ if and only if $x$ and $y$ belong to the same connected component of a level set of $f$. This determines a graph $M/\sim_{f}$ which we say is a \textbf{\textit{Lyapunov graph associated to the  Lyapunov function $f$ and the flow $\phi_{t}$ }} if  the vertices are labelled with a chain recurrent  flow on a compact set and the edges with the genera of the level surfaces given by $f^{-1}(c)$ where $c$ is a regular value. 

One should point out that this definition differs from the ones in \cite{franks1985nonsingular} and in \cite{yu2012} since the dynamics under consideration therein, being non-singular, forced the genera to be equal to one in the first case and less than or equal to one in the latter case. The presence of singularities, as is the case herein, permits level surfaces of all genera. As a guiding principle, the more complicated the flow or if the phase space is higher dimensional, the more labelling with topological invariants  of the associated Lyapunov graph  may be required in order to obtain meaningful characterization results.

The \textbf{\textit{cycle rank}} of the graph is the maximum number of edges that can be removed without disconnecting the graph and denoted by $\beta(L)$. The \textbf{\textit{indegree}} of a vertex $u$ in $L$ is the number of incoming edges incident to $u$ and the \textbf{\textit{outdegree}} of $u$ is the number of outgoing edges incident to $u$.

\begin{definition}
An \textbf{abstract Lyapunov graph} is a finite, connected, oriented graph $L$ such that:
\begin{enumerate}
\item $L$ has no oriented cycles;
\item each vertex is labelled with a chain recurrent flow on a compact space;
\item each edge is labelled with a non-negative number $g$, which we refer to as the weight on the edge.
\end{enumerate} 
\end{definition}

The following result is the main theorem in this paper which characterizes Smale flows on $\ss$. 

\begin{theorem}\label{MainTheorem}
Let $L$ be an abstract Lyapunov graph. $L$ is associated with a Lyapunov function and a Smale flow  $\phi_{t}$ on $\ss$ if and only if:
\begin{enumerate}
\item The underlying graph $L$ is an oriented graph with exactly one edge attached to each sink or source vertex. Moreover, the sink $(\text{source})$ vertex is labelled with an index $0\,(\text{index}\, 3)$ singularity or an attracting $(\text{repelling})$ periodic orbit. 
\item If a vertex of indegree $e^+$ and outdegree $e^-$ is labelled with a singularity of index $2\,\, ($index $1)$, then $1\leq e^{+}\leq 2$ and $e^{-}=1$ $(1\leq e^{-}\leq 2$ and $e^{+}=1)$.
\item If a vertex $v$ of indegree $e^+>0$ and outdegree $e^->0$ is labelled  with a suspension of a subshift of finite type associated to a non-negative integer matrix  $A_{m\times m}$ with $k=\di\ke\left(\left(I-B\right):F^{m}_{2}\rightarrow F^{m}_{2}\right))$, $F_{2}=\Z/2$, $b_{ij}=a_{ij}\mo 2$ and $g_{i}^{+}(g_{j}^{-})$ are the weights on the incoming $($outgoing$)$ edges incident to the vertex $v$, where $G^-=\sum^{e^-}_{i=1}g^-_i$ and  $G^+=\sum^{e^+}_{i=1}g^+_i$, then there are two cases to consider:
$\beta(L)=0$ and $\beta(L)=1$. 
\begin{enumerate}
\item $\beta(L)=1$ we have:
\begin{equation}\label{MainTheorem-eq1}
k+1-G^-\leq e^+\leq k+1 \quad\text{and}\quad k+1-G^+\leq e^-\leq k+1.
\end{equation}

\item $\beta(L)=0$ we have: 
\begin{enumerate}

\item  There exists at most one vertex labelled with a suspension of a subshift of finite type with
\begin{equation}\label{MainTheorem-eq2}
k-G^{-}= e^{+}\quad\text{and}\quad k-G^{+}= e^{-}
\end{equation}
Any other vertex labelled with a suspension of a subshift of finite type satisfies the inequalities in~(\ref{MainTheorem-eq1}).
\item If no vertex on $L$ satisfies the equality in~(\ref{MainTheorem-eq2}) then there must be an edge in $L$ with non zero weight.
\end{enumerate}

\end{enumerate}
\item All vertices must satisfy the Poincar\'e-Hopf condition, i.e., for a vertex labelled with a singularity of index $r$, the condition is 
\begin{equation}\label{MainTheorem-eq3}
(-1)^r=e^+-e^--\sum g^+_j+\sum g^-_i
\end{equation}
and for a vertex labelled with a suspension of a subshift of finite type or a periodic orbit, the condition is 
\begin{equation}\label{MainTheorem-eq4}
0=e^+-e^--\sum g^+_j+\sum g^-_i
\end{equation}
\end{enumerate}
\end{theorem}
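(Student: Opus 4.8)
### Proof Strategy

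The plan is to prove both directions of the equivalence by reducing the global question about $\ss$ to a gluing problem for \emph{basic blocks} --- round handles carrying the chain recurrent pieces --- organized along the Lyapunov graph $L$. The key structural fact we exploit is that a Lyapunov function $f$ on $M=\ss$ decomposes $M$ into the preimages of small neighborhoods of the critical levels (which are the basic blocks associated to vertices) and the preimages of regular intervals (which are cylinders $F_g\times[0,1]$ over the level surface $f^{-1}(c)$); the graph $L=M/\sim_f$ records exactly this decomposition, with edge weights the genera $g$ of the connecting surfaces. So the necessity direction amounts to extracting constraints that any such decomposition of $\ss$ must satisfy, and the sufficiency direction amounts to assembling, from any $L$ meeting conditions (1)--(4), an honest flow.

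For \textbf{necessity}, I would argue vertex by vertex. Conditions (1) and (2) are local: a sink (source) is realized by an isolating block that is a ball (for an index $0$ or $3$ singularity) or a solid torus (for an attracting or repelling closed orbit), and in either case the boundary is connected, forcing a single incident edge; the index $1$ and $2$ singularity blocks have boundary $S^2 \sqcup S^2$ or a single $S^2$ depending on orientation, giving the stated in/out-degree bounds --- this is essentially the round handle / index-pair analysis of Franks and de Rezende adapted to the present setting. Condition (4) is the Poincar\'e--Hopf / Euler characteristic bookkeeping: summing $\chi$ over the pieces must give $\chi(\ss)=0$, and locally each block contributes $(-1)^r$ for an index-$r$ singularity and $0$ for a periodic orbit or suspension, with the edge terms $\sum g^\pm$ coming from the Euler characteristics of the boundary surfaces; this is a standard additivity argument. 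The substantive constraint is condition (3), bounding the in/outdegree of a suspension vertex in terms of $k=\dim\ker(I-B)$ over $F_2$ and the incident genera. Here the point is that the isolating block $N$ of a suspension of a subshift of finite type on a matrix $A$ is (up to the $F_2$-homological data) determined by $A$; its boundary splits into $\partial^+N$ and $\partial^-N$, and the number of connected components of $\partial^\pm N$, together with their genera, is controlled by the $F_2$-Betti numbers of $N$ --- precisely the quantity $k$. Gluing cylinders of genus $g_i^\pm$ onto these boundary components and demanding that the total be embeddable in $\ss$ (in particular that the resulting closed surfaces bound correctly and that $H_1(\ss;F_2)$ be respected) yields the displayed inequalities; the split into $\beta(L)=0$ and $\beta(L)=1$ reflects that $\ss$ has $\beta_1=1$, so the graph can carry at most one independent cycle and, correspondingly, the homology "slack" in the inequalities is used up either by the graph cycle or by a single distinguished vertex satisfying the equality in~(\ref{MainTheorem-eq2}).

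For \textbf{sufficiency}, given $L$ satisfying (1)--(4), I would build the flow constructively: realize each vertex by its model isolating block (ball, solid torus, or the suspension block $N_A$ dictated by the matrix), realize each edge by a cylinder $F_{g}\times[0,1]$, and glue along boundary surfaces following the orientation of $L$. The Poincar\'e--Hopf conditions guarantee the genera match up so that the gluings are well-defined orientation-compatibly; conditions (1)--(3) guarantee that the resulting closed $3$-manifold $M_L$ has the right local pieces and the right homological profile. The crux is then to verify $M_L \cong \ss$: since $M_L$ is a closed orientable $3$-manifold assembled from handles with prescribed first homology, I would compute $H_*(M_L;\Z)$ from a Mayer--Vietoris / graph-of-spaces argument and check $H_1(M_L)=\Z$, $H_2(M_L)=\Z$, and (using that $\ss$ is the unique such manifold that is prime and not $S^3$, or invoking that it is built without any incompressible higher-genus pieces) conclude $M_L=\ss$; one also checks the flow one writes on $M_L$ has chain recurrent set exactly the prescribed basic sets, which is automatic from the block-by-block construction.

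I expect the \textbf{main obstacle} to be the embedding analysis for the suspension blocks in condition (3) --- specifically, pinning down exactly how the $F_2$-homological invariant $k$ of $N_A$ constrains the number and genera of the boundary components when $N_A$ must sit inside $\ss$ (as opposed to $\mathbb S^3$), and handling the interaction between this local constraint and the single available global cycle, which is what forces the delicate $\beta(L)=0$ versus $\beta(L)=1$ dichotomy and the "at most one exceptional vertex" clause. Getting the inequalities tight in both directions (realizability of every allowed configuration, and non-realizability of every disallowed one) is where the real work lies; the rest is organization and standard $3$-manifold bookkeeping.
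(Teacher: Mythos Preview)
Your outline for necessity is broadly aligned with the paper: conditions (1), (2), (4) are indeed local/Poincar\'e--Hopf facts, and condition (3) does come from the Bowen--Franks computation of $H_*(X,Z;F_2)$ for the block containing a suspension. What you underestimate is how much geometric input is needed to turn $k=\dim\ker(I-B)$ into the sharp inequalities. The paper does not argue purely from the block $N_A$; it analyzes how the complementary pieces sit in $\ss$, distinguishing whether a level surface is separating or not, whether it is $\pi_1$-trivial in $\ss$ or not, and whether the complementary components are of ``handlebody type'' ($H_2=0$, $\dim H_1=g$). These distinctions, not just the homology of $N_A$, are what force the $\beta(L)=0$ versus $\beta(L)=1$ split and the ``at most one exceptional vertex'' clause. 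Your description does not contain a mechanism that would produce Proposition~\ref{le7} (the equality case $e^\pm\le k$) as opposed to Propositions~\ref{pr6}--\ref{pr7} (the $e^\pm\le k+1$ case).

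The sufficiency argument has a genuine gap. You propose to assemble an abstract closed $3$-manifold $M_L$ from model blocks and cylinders, compute its homology, and then identify $M_L\cong\ss$. This fails on two counts. First, the gluing is underdetermined: two genus-$g$ handlebodies glued along their common boundary give manifolds depending on the gluing diffeomorphism, so ``glue along boundary surfaces following the orientation of $L$'' does not specify $M_L$. Second, even granting a particular gluing, having $H_1=\Z$, $H_2=\Z$ does not characterize $\ss$ among closed orientable $3$-manifolds; your parenthetical appeals (primality, absence of incompressible higher-genus pieces) are not consequences of the construction as described. The paper avoids this entirely by never forming an abstract $M_L$: it builds the flow \emph{inside} $\ss$ from the outset. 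For $\beta(L)=1$, and for $\beta(L)=0$ with a nonzero-weight edge, it cuts $L$, completes the pieces to graphs realizable on $\mathbb{S}^3$ via Theorem~\ref{teorem1}, and then performs an explicit cut-and-reglue on the resulting $\mathbb{S}^3$'s to obtain $\ss$. For the remaining case (the unique exceptional vertex satisfying~(\ref{MainTheorem-eq2})), it constructs a ``special round handle'' from a non-separating $2$-sphere in $\ss$ and shows its complement is a disjoint union of handlebodies of the prescribed genera (Corollary~\ref{co3}, Proposition~\ref{pr-suff}); the rest of the graph is then realized on those handlebodies via the $\mathbb{S}^3$ theory. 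In short, the identification $M_L\cong\ss$ is achieved by construction, not by a posteriori recognition, and that is the step your proposal is missing.
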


The proof of the necessity of the conditions in Theorem~\ref{MainTheorem} follow from Propositions~\ref{pr6}, \ref{pr5}, \ref{pr7} and~\ref{le7}. The proof of the sufficiency of the conditions in Theorem~\ref{MainTheorem} follows from Proposition~\ref{pr-suff}.

In Section~\ref{sec2} we introduce  background material. In Section~\ref{sec3} we provide some topological properties of embedded surfaces in $\ss$. We introduce the notion of $3$-manifolds of handlebody type. In Section~\ref{sec4} we study the relationship between the graph and manifolds of handlebody type. In Sections~\ref{sec5} and~\ref{sec-suff} we state and prove propositions which comprise the foundation of the proof of the main result, Theorem~\ref{MainTheorem}. Throughout this paper, we consider homology with $F_2=\Z_2$ coefficients.

%%%%%%%%%%%%%%%%%%%%%%%%%%%%%%%%%%%%%%%%%%%%%%%%%%%%%%%%%%%%%%%%%%%%%%%%%%%%%%%%%%%%%%%%%%%%%%%%%%%%%%%%%%%

\section{Background and Definitions}\label{sec2}
In this section, we provide background material, basic definitions and theorems, essential to the understanding of our work. 

\subsection{Smale Flows}

A compact invariant set $\Lambda$ for a smooth flow $\phi_{t}:M\rightarrow M$ is said to have a \textbf{\textit{hyperbolic structure}} provided that the tangent bundle of $M$ restricted to $\Lambda$ can be written as the Whitney sum of three sub-bundles $E^{u}+E^{s}+E^{c}$, each being invariant under $D\phi_{t}$ for all $t$, in such a way that $E^{c}$ is spanned by the vector field tangent to the flow and there are constants $C$, $\alpha>0$ satisfying
  
$$\left\|D\phi_{t}(v)\right\|\leq Ce^{-\alpha t}\left\|v\right\|\quad\text{for $v\in E^{s}$, $t\geq 0$}$$ 
and 
$$\left\|D\phi_{t}(v)\right\|\geq C^{-1}e^{\alpha t}\left\|v\right\|\quad\text{for $v\in E^{u}$, $t\geq 0$}.$$ 
An important consequence of the hyperbolicity is that $\mathfrak{R}$ is decomposed into finitely many irreducible pieces \cite{smale1967}.
\begin{theorem}
Suppose that the chain recurrent set $\mathfrak{R}$ of a flow on a compact manifold $\phi_{t}:M\rightarrow M$ has a hyperbolic structure. Then $\mathfrak{R}$ is a finite disjoint union of compact invariant sets $\Lambda_1,\dots,\Lambda_n$ and each $\Lambda_i$ contains a point whose forward orbit is dense in $\Lambda_i$. 
\end{theorem}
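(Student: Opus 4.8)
The plan is to follow Smale's spectral-decomposition argument \cite{smale1967}, with the chain recurrent set in place of the non-wandering set as in Conley's framework \cite{conley1978isolated}. Everything rests on three consequences of the hyperbolic splitting $T_{\mathfrak{R}}M=E^{u}\oplus E^{s}\oplus E^{c}$. First, the stable manifold theorem for hyperbolic invariant sets: for small $\epsilon>0$ each $x\in\mathfrak{R}$ has local stable and unstable sets $W^{s}_{\epsilon}(x),W^{u}_{\epsilon}(x)$, varying continuously with $x$, along which forward, respectively backward, orbits converge exponentially, together with expansiveness of $\phi_{t}$ on $\mathfrak{R}$. Second, a local product structure: there is $\delta>0$ so that whenever $x,y\in\mathfrak{R}$ with $d(x,y)<\delta$, the set $W^{s}_{\epsilon}(x)\cap W^{u}_{\epsilon}(y)$ is a single short flow segment $[x,y]$, and $[x,y]\in\mathfrak{R}$ — the membership being where chain recurrence enters, since the forward orbit of $[x,y]$ shadows that of $x$, its backward orbit shadows that of $y$, and splicing short $\epsilon$-chains through the recurrent points $x$ and $y$ produces $\epsilon$-chains from $[x,y]$ to itself. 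Third, the shadowing lemma near $\mathfrak{R}$: for every $\beta>0$ there is $\epsilon>0$ such that every $\epsilon$-chain lying in an $\epsilon$-neighborhood of $\mathfrak{R}$ is $\beta$-shadowed by a genuine orbit, unique up to reparametrization by expansiveness. These are the standard graph-transform and contraction arguments, which I would cite rather than reprove.

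Given these tools, I would define $x\sim y$ on $\mathfrak{R}$ to mean that for every $\epsilon>0$ there exist $\epsilon$-chains from $x$ to $y$ and from $y$ to $x$; concatenation of chains makes $\sim$ an equivalence relation, and its classes are plainly flow-invariant and closed. The crucial point is that each class is also \emph{open} in $\mathfrak{R}$: by the local product structure a neighborhood of $x$ in $\mathfrak{R}$ is covered by points $[z,w]$ with $z\in\mathfrak{R}\cap W^{s}_{\epsilon}(x)$ and $w\in\mathfrak{R}\cap W^{u}_{\epsilon}(x)$; any point forward asymptotic to the orbit of $x$ is $\sim$-equivalent to $x$ (append an arbitrarily small jump and use that $x$ is chain recurrent), and likewise for points backward asymptotic, so the whole neighborhood lies in the class of $x$. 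Since $\mathfrak{R}$ is compact and is partitioned into relatively open-and-closed classes, there are only finitely many, say $\Lambda_{1},\dots,\Lambda_{n}$, and these are pairwise disjoint, compact, and invariant.

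It remains to produce in each $\Lambda_{i}$ a point with dense forward orbit. By construction $\Lambda_{i}$ is chain transitive: for any $x,y\in\Lambda_{i}$ and any $\epsilon>0$ there is an $\epsilon$-chain from $x$ to $y$ inside $\Lambda_{i}$. Shadowing such a chain yields a genuine orbit that starts arbitrarily close to $x$ and later passes arbitrarily close to $y$, so $\phi_{t}|_{\Lambda_{i}}$ is topologically transitive. As $\Lambda_{i}$ is a compact metric space it is a Baire space; fixing a countable base $\{U_{k}\}$ of $\Lambda_{i}$, each set $\{z\in\Lambda_{i}:\phi_{t}(z)\in U_{k}\text{ for some }t\geq 0\}$ is open, being the union $\bigcup_{t\geq0}\phi_{-t}(U_{k})$, and dense by the transitivity just established; their countable intersection is therefore a dense $G_{\delta}$, in particular nonempty, and every point of it has forward orbit dense in $\Lambda_{i}$, which is exactly what the statement asks for.

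The genuinely hard part is the analytic core of the first paragraph: upgrading the bare hyperbolic splitting to the stable manifold theorem, establishing the local product structure of $\mathfrak{R}$ — notably the fact that $[x,y]$ remains in $\mathfrak{R}$ — and proving the shadowing lemma. Once those are in hand, the passage to finitely many chain-transitive pieces and the Baire-category production of a dense orbit on each are comparatively soft.
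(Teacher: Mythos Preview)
The paper does not prove this theorem at all: it is quoted as background, attributed to Smale~\cite{smale1967}, and followed immediately by the definition of basic sets. So there is no ``paper's own proof'' to compare against; what you have written is a self-contained sketch of the spectral decomposition in the Conley chain-recurrent setting, which is exactly the standard route and is correct in outline.

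Two places deserve a little more care. First, in your openness argument you assert that a point $p\in\mathfrak{R}$ forward-asymptotic to the orbit of $x$ is automatically $\sim$-equivalent to $x$; the chain $p\to x$ is clear, but the chain $x\to p$ is not immediate from chain recurrence of $x$ alone. The clean fix is to invoke shadowing on periodic $\epsilon$-chains to get density of periodic orbits in $\mathfrak{R}$, and then argue openness via heteroclinic relation of nearby periodic orbits through the local product structure --- this is how Smale's original proof runs and it sidesteps the asymmetry. Second, when you shadow an $\epsilon$-chain from $x$ to $y$ inside $\Lambda_i$ to conclude transitivity, the shadowing orbit lives in $M$, not a priori in $\Lambda_i$; close the chain into a periodic one (possible since $\Lambda_i$ is chain transitive), so that the shadowing orbit is periodic, hence in $\mathfrak{R}$, hence in $\Lambda_i$ by openness. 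With those two tweaks the argument is complete.
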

The sets $\Lambda_i$ are called \textbf{\textit{basic sets}} of the flow and are precisely the chain transitive pieces of $\mathfrak{R}$, which are not separated by Lyapunov function.\\
If $\Lambda$ is a compact invariant hyperbolic set for a flow then each orbit in $\Lambda$ has a \textbf{\textit{stable}} and \textbf{\textit{unstable manifold}}. These are defined as follows for $x\in\Lambda$,
$$W^{s}(x)=\left\{y\in M\,|\,\text{for some}\,r\in\re,\,d(\phi_{t}(y),\phi_{t+r}(x))\rightarrow 0\,\text{as}\,t\rightarrow \infty\right\}$$
and
$$W^{u}(x)=\left\{y\in M\,|\,\text{for some}\,r\in\re,\,d(\phi_{t}(y),\phi_{t+r}(x))\rightarrow 0\,\text{as}\,t\rightarrow -\infty\right\}$$
\begin{definition}
A flow $\phi_{t}:M\rightarrow M$ with hyperbolic chain recurrent set $\mathfrak{R}$ is said to satisfy the \textbf{transversality condition} provided that for each $x,y\in\mathfrak{R}$, the manifolds $W^{s}(x)$ and $W^{u}(y)$ intersect transversally. 
\end{definition}
\begin{definition} A smooth flow $\phi_{t}:M\rightarrow M$ on a compact manifold is called a \textbf{Smale flow} provided:
\begin{enumerate}
\item its chain recurrent set $\mathfrak{R}$ has a hyperbolic structure and $\dim\mathfrak{R}\leq 1$;
\item it satisfies the transversality condition.
\end{enumerate}
\end{definition} 

Bowen~\cite{bowen1972} gives a complete dynamical description of the basic sets of Smale flows. 
\begin{theorem}
Let $\phi_{t}$ be a flow with hyperbolic chain recurrent set and $\Lambda$ a basic set of dimension 1, then $\phi_{t}$ restricted to $\Lambda$ is topologically conjugate to the suspension of a subshift of finite type associated to an irreducible matrix. 
\end{theorem}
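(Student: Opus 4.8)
The plan is to reduce $\phi_t|_\Lambda$ to a first-return map on a zero-dimensional transverse set and then to code that return map symbolically by means of a Markov partition.

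First I would observe that, because $\dim\Lambda=1$ and $\Lambda$ is hyperbolic, $\Lambda$ contains no rest points: $\Lambda$ is the closure of a forward-dense orbit, hence connected, and the index $\dim(E^s\oplus E^u)$, being locally constant on $\Lambda$ and equal to $\dim M-1$ at its regular points, cannot jump to $\dim M$, so there are no singularities. Thus every point of $\Lambda$ is regular and the flow-box theorem applies near each of them. By compactness one extracts finitely many pairwise disjoint embedded disks $D_1,\dots,D_n$, each transverse to $\phi_t$ and tangent to $E^s\oplus E^u$ along $\Lambda$, such that the forward orbit of every point of $\Lambda$ meets $D:=\bigcup_i D_i$. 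The first-return map $P\colon \Lambda\cap D\to\Lambda\cap D$ is then a well-defined homeomorphism with continuous, positive and bounded return time $\tau$, and $\phi_t|_\Lambda$ is topologically conjugate to the suspension of $P$ with roof $\tau$. Since $E^c$ is the flow direction, $\Lambda\cap D$ is a hyperbolic invariant set of the diffeomorphism $P$ with splitting $E^s\oplus E^u$, and because $\Lambda$ has dimension $1$ while its orbits are one-dimensional, $\Lambda\cap D$ is totally disconnected.

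Next I would invoke Bowen's construction of Markov partitions for hyperbolic sets of diffeomorphisms, applied to $P$ on $\Lambda\cap D$: using the local product structure and the shadowing lemma one covers $\Lambda\cap D$ by finitely many small rectangles with the Markov property. The decisive simplification in our situation is that $\Lambda\cap D$ is zero-dimensional, so the rectangles may be shrunk until they are clopen in $\Lambda\cap D$ and pairwise disjoint. The itinerary map $\pi\colon\Sigma_A\to\Lambda\cap D$ attached to the partition, where $A$ is the transition matrix with entries in $\{0,1\}$ recording which rectangle may follow which, is then not merely a finite-to-one factor map but an actual homeomorphism conjugating the shift $\sigma$ on $\Sigma_A$ to $P$. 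Chain transitivity of $\Lambda$ yields a dense $P$-orbit, which forces $A$ to be irreducible.

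Finally, transporting $\tau$ by $\pi$ produces a positive continuous roof $r=\tau\circ\pi$ on $\Sigma_A$, and $\phi_t|_\Lambda$ is topologically conjugate to the suspension of the irreducible subshift of finite type $(\Sigma_A,\sigma)$ with roof $r$, which is the assertion. I expect the Markov partition step to be the main obstacle: carrying out Bowen's construction and, above all, verifying that in the totally disconnected transverse setting the rectangles can be taken clopen, so that the symbolic coding is upgraded from a semiconjugacy to a genuine conjugacy; the flow-box reduction, the hyperbolicity of $P$, and the irreducibility of $A$ are by comparison routine.
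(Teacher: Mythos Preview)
The paper does not prove this theorem; it is quoted from Bowen \cite{bowen1972} as background and no argument is given in the text. Your sketch is a faithful outline of Bowen's original approach: pass to a transverse section, obtain a zero-dimensional hyperbolic first-return map, build a Markov partition whose rectangles are clopen because the section is totally disconnected, and read off an irreducible subshift from the transition matrix. So there is nothing in the paper to compare against beyond the citation.

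Two small comments on the sketch itself. Your argument excluding singularities can be streamlined: the continuity of the Whitney splitting forces $\dim E^c$ to be constant on $\Lambda$, and since $\dim\Lambda=1$ the set is not a single point, so $\dim E^c=1$ everywhere and there are no rest points; the connectedness of $\Lambda$ (closure of a dense orbit) is not really needed for this. The genuinely delicate step, as you correctly flag, is the Markov partition construction and the passage from semiconjugacy to conjugacy; in the zero-dimensional setting this is indeed easier than in the general case because one can refine the partition until the rectangles are clopen and disjoint, making the itinerary map injective. That is exactly Bowen's point, and your proposal identifies it accurately.
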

Hence, the chain recurrent set of a Smale flow is made up of singularities, periodic orbits and suspensions of subshifts of finite type. 
\begin{definition}
If $A$ and $B$ are non-negative integer matrices they are \textbf{flow equivalent} provided the suspension of the subshifts of finite type $\sigma(A)$ and $\sigma(B)$ are topologically equivalent. 
\end{definition}

%%%%%%%%%%%%%%%%%%%%%%%%%%%%%%%%%%%%%%%%%%%%%%%%%%%%%%%%%%%%%%%%%%%%%%%%%%%%%%%%%%%%%%%%%%%%%%%%%%%%%%%%%%%%%%

Also, we will need to define the genus of a manifold and use the following result which characterizes it.

\begin{definition}
Let $M$ be a smooth, compact, connected $n$-manifold with boundary. The \textbf{genus of manifold} $M$, $g(M)$ is the maximal number of mutually disjoint, smooth, compact, connected, two-sided codimension one submanifolds that do not disconnect $M$.    
\end{definition}
The following result gives the relation between the cycle rank of $L$ and the genus of the fundamental group of $M$.
\begin{proposition}\label{co}
Let $M$ be a connected, closed smooth $n$-manifold. Let 
$\phi_{t}$ be a smooth flow on $M$ with associated Lyapunov function $f$. Let $L$ be a Lyapunov graph associated to $f$. Then
$$\beta(L)\leq g(M)$$
\end{proposition}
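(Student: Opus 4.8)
The plan is to prove the inequality $\beta(L) \le g(M)$ by extracting, from any collection of edges whose removal keeps $L$ connected, a corresponding collection of disjoint, non-separating codimension-one submanifolds of $M$. First I would recall that the edges of $L$ correspond to components of regular level sets $f^{-1}(c)$ of the Lyapunov function, and each such component is a smooth, compact, connected, two-sided codimension-one submanifold of $M$ (two-sided because it is a level set of a real-valued function with nonzero gradient). So to each edge $e$ of $L$ we may associate a surface $\Sigma_e \subset M$, and by choosing distinct regular values on distinct edges (or slightly perturbing), the surfaces $\Sigma_{e_1},\dots,\Sigma_{e_k}$ associated to distinct edges $e_1,\dots,e_k$ are pairwise disjoint.

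Next I would set up the combinatorial heart of the argument. By definition $\beta(L)$ is the maximum number of edges $e_1,\dots,e_k$ that can be deleted from $L$ without disconnecting it; equivalently, $\beta(L) = |E(L)| - |V(L)| + 1$ is the first Betti number of the graph, and one can choose such a family to be the complement of a spanning tree. The key claim is: \emph{if $L \setminus \{e_1,\dots,e_k\}$ is connected, then $M \setminus (\Sigma_{e_1} \cup \dots \cup \Sigma_{e_k})$ is connected.} Granting this claim, the $k$ surfaces $\Sigma_{e_i}$ are mutually disjoint, smooth, compact, connected, two-sided and codimension one, and they do not disconnect $M$, so by the definition of $g(M)$ we get $k \le g(M)$. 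Taking $k = \beta(L)$ finishes the proof.

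To prove the claim I would use the structure of the quotient map $\pi \colon M \to M/\sim_f = L$. Cutting $M$ along the level surface $\Sigma_{e}$ corresponds, on the graph side, to cutting the edge $e$ at an interior point. More precisely, components of $M \setminus (\Sigma_{e_1}\cup\dots\cup\Sigma_{e_k})$ map onto components of $L$ with the interior points of $e_1,\dots,e_k$ removed, and conversely: since $f$ restricted to any connected component $W$ of $M$ cut along the surfaces has connected level sets (the level sets of $f$ in $M$ are unions of fibers of $\pi$, and within each piece one is looking at a connected sub-multigraph), two points of $M\setminus \bigcup \Sigma_{e_i}$ lying over the same point of the cut graph, or over points in the same component of the cut graph, can be joined by a path: one moves along fibers of $\pi$ (which are connected level-set components, hence path-connected) and along paths in $M$ projecting to edge-paths in the cut graph. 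Since $L \setminus\{e_1,\dots,e_k\}$ is connected, all of $M \setminus \bigcup \Sigma_{e_i}$ lies in one component.

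The main obstacle I expect is making the last step fully rigorous: one must argue carefully that connectedness downstairs in the (cut) graph lifts to connectedness upstairs in (cut) $M$. This requires knowing that $\pi$ is, in a suitable sense, a quotient map whose point-preimages (level-set components) are connected and that over the open edges it looks like a product $F \times (a,b)$ with $F$ a connected surface — which holds on the complement of the critical fibers, and the basic sets sit over vertices, so traversing an edge of the cut graph corresponds to flowing through a product region. One should also confirm that distinct edges can be assigned disjoint surfaces; this is immediate since distinct edges of $M/\sim_f$ project to disjoint open arcs of regular values, so picking one regular value per edge yields disjoint level-set components. Once these topological bookkeeping points are nailed down, the inequality follows directly from the definition of the genus of a manifold.
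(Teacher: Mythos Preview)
Your argument is correct and is the natural one. Note, however, that the paper does not actually prove this proposition: it simply refers the reader to \cite{cruz1998cycle} for the proof. Your sketch is precisely the standard argument one finds there --- associate to a maximal set of $\beta(L)$ non-disconnecting edges the corresponding level-set components, observe they are disjoint two-sided codimension-one submanifolds, and check that cutting $M$ along them corresponds to cutting $L$ along the edges, hence does not disconnect $M$. The only point worth tightening is the one you already flagged: that the quotient map $\pi:M\to L$ has connected fibers and is a trivial $F\times(a,b)$-bundle over each open edge, so that a path in the cut graph lifts to a path in the cut manifold; once this is said carefully the proof is complete.
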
  
See \cite{cruz1998cycle} for the proof.

Let $\phi_t$ be a smooth flow on $\ss$ with $L$ its associated Lyapunov graph. Since, $g(\ss)=1$ by Proposition~\ref{co}, one has
$$
\beta(L)\leq 1
$$ 
Thus, Lyapunov graphs associated  with smooth flows, in particular Smale flows, on $\ss$ have at most one cycle.

%%% SECTION %%%%%%%%%%%%%%%%%%%%%%%%%%%%%%%%%%%%%%%%%%%%%%%%%%%%%%%%%%%%%%%%%%%%%%%

\subsection{Filtration and Homology}

The existence of a smooth Lyapunov function for a flow $\phi_t$ implies the existence of a filtration associated to it. 

\begin{definition} If $\phi_{t}:M\rightarrow M$ is a flow with hyperbolic chain recurrent set with basic sets $\left\{\Lambda_{i}\right\}\,(i=1,\dots,n)$, a \textbf{filtration} associated with $\phi_{t}$ is a collection of submanifolds
$M_{0}\subset M_{1}\subset\dots\subset M_{n}=M$ such that
\begin{enumerate}
\item $\phi_{t}(M_{i})\subset \inte M_i$, for each $t>0$;
\item $\Lambda_{i}=\bigcap^{\infty}_{t=-\infty}\phi_{t}(M_{i}-M_{i-1}).$
\end{enumerate}
\end{definition}

The following is a result due to Bowen and Franks~\cite{bowen1977homology}.

\begin{theorem}\label{teor1}
Let $\phi_{t}$ be a Smale flow and let $M_{i},\,i=1,\dots,n$ be a filtration associated to $\phi_{t}$. Suppose that $\Lambda_{i}=\bigcap^{\infty}_{t=-\infty}\phi_{t}(M_{i}-M_{i-1})$ is a basic set of index $k$ labelled with a structure matrix $A_{m\times m}$. Then
$$H_{s}(M_{i},M_{i-1}; F_{2})=0,\,\text{se}\quad s\neq k,k+1;$$
$$H_{k}(M_{i},M_{i-1}; F_{2})\cong F_{2}^{m}/(I-B)F_{2}^{m};$$
$$H_{k+1}(M_{i},M_{i-1}; F_{2})\cong \ke((I-B)\,\text{on}\,\,F_{2}^{m}),$$
where $B=A\mo 2$.
\end{theorem}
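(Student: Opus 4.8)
The statement is the theorem of Bowen and Franks, so the plan is to reproduce their argument with $F_2$-coefficients; working mod $2$ from the outset is exactly what lets one replace the non-negative integer matrix $A$ by $B=A\bmod 2$ and dispense with all orientation bookkeeping. First I would reduce the computation to that of the homological Conley index of $\Lambda_i$. Conditions (1) and (2) in the definition of a filtration say precisely that $N_i:=\cl(M_i-M_{i-1})$ is an isolating neighbourhood whose maximal invariant set is $\Lambda_i$ and that $(M_i,M_{i-1})$ is an index pair for it; since the homology of an index pair depends only on the isolated invariant set, $H_*(M_i,M_{i-1};F_2)$ may be computed from any index pair $(N,L)$ with maximal invariant set $\Lambda_i$ that is convenient for us.

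Second, I would construct such a convenient $(N,L)$ from the symbolic description of $\Lambda_i$. By Bowen's theorem $\phi_t|_{\Lambda_i}$ is topologically conjugate to the suspension of the subshift of finite type $\sigma(A)$ — the case of an attracting or repelling periodic orbit being $m=1$, $A=(1)$ — so $\Lambda_i$ carries a Markov partition into $m$ rectangles $R_1,\dots,R_m$. I would take $N$ to be a handlebody-type block obtained by gluing $m$ flow boxes $B_j\cong D^{k}\times D^{n-1-k}\times[0,1]$, one per symbol: local unstable disk $\times$ local stable disk $\times$ flow time, where $n=\dim M$ and $k$ is the index of $\Lambda_i$, with the exit end of $B_j$ attached to the entry side of $B_i$ exactly when $a_{ij}\neq 0$. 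The exit set $L\subset\partial N$ then consists of the stable-boundary faces of the boxes together with the unattached exit ends. With this model $(N,L)$ is homotopy equivalent to a finite CW pair with cells only in dimensions $k$ and $k+1$: one $k$-cell $e_j^{k}$ per entry cross-section (the stable factor being collapsed into $L$) and one $(k+1)$-cell $e_j^{k+1}$ per flow box, namely the core unstable disk swept by the flow.

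Third, I would read off the cellular boundary $\partial\colon C_{k+1}=F_2^{m}\to C_k=F_2^{m}$. The boundary of $e_j^{k+1}$ has a contribution $e_j^{k}$ coming from the entry end of $B_j$ and a contribution $\sum_i a_{ij}e_i^{k}$, taken mod $2$ (hence weighted by $b_{ij}$), coming from its exit end after the gluings dictated by $A$; thus $\partial$ is the map $I-B$, up to a transposition that changes neither the isomorphism type of the kernel nor that of the cokernel over $F_2$. Since the complex is concentrated in degrees $k$ and $k+1$, this immediately yields
\begin{gather*}
H_{s}(M_{i},M_{i-1};F_{2})=0\qquad(s\neq k,\,k+1),\\
H_{k}(M_{i},M_{i-1};F_{2})\cong F_{2}^{m}/(I-B)F_{2}^{m},\qquad H_{k+1}(M_{i},M_{i-1};F_{2})\cong \ke\bigl((I-B)\ \text{on}\ F_{2}^{m}\bigr),
\end{gather*}
which is the assertion; here the $I$ records the self-attachment of each box to its own cross-section and the $B$ records the allowed transitions.

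The step I expect to be the main obstacle is the second one: producing the handlebody-type block $N$ from the Markov partition so that its attaching data is faithfully the matrix $A$, and verifying that the resulting CW pair genuinely has cells only in the two adjacent dimensions $k$ and $k+1$ with cellular boundary exactly $I-B$. This is the technical core of the Bowen–Franks construction in \cite{bowen1977homology}, and it is where the presence of the flow (the $[0,1]$ factor in each box) matters: it is this extra direction that produces the second nontrivial group, in degree $k+1$, rather than a single group in degree $k$ as in the diffeomorphism case.
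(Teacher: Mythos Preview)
The paper does not actually prove this theorem: immediately after the statement it simply writes ``See \cite{franks1982} for the proof'' and moves on, treating the result as a cited black box due to Bowen and Franks. So there is no argument in the paper against which to compare your proposal.

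Your outline is a reasonable sketch of the Bowen--Franks argument as presented in \cite{bowen1977homology} and \cite{franks1982}: reduce to the Conley index via the index-pair property of $(M_i,M_{i-1})$, build a model isolating block out of $m$ flow boxes glued according to the transition matrix, and read off a two-term cellular chain complex $F_2^m\xrightarrow{I-B}F_2^m$. The place you flag as the main difficulty --- constructing the block so that its CW structure really has cells only in dimensions $k$ and $k+1$ with boundary exactly $I-B$ --- is indeed where the work lies in the original reference, and your identification of the entry/exit contributions to $\partial e_j^{k+1}$ is correct. Since the paper itself defers entirely to the literature here, your sketch already goes further than the paper does.
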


See \cite{franks1982} for the proof.

The following theorem in \cite{de1987smale} for Smale flows on $S^{3}$, completely classifies Lyapunov graphs for flows on $S^{3}$.

\begin{theorem}\label{teorem1}
Let $L$ be an abstract Lyapunov graph. $L$ is associated with a Smale flow $\phi_t$ on $\mathbb{S}^3$ if only if the following conditions hold:
\begin{enumerate}
\item The underlying graph $L$ is a tree with exactly one edge attached to each sink or source vertex. Moreover, the sink (source) vertex is labelled with an index $0$ (index $3$) singularity or an attracting (repelling) periodic orbit. 
\item If a vertex is labelled with a singularity of index $2$ (index $1$) then $1\leq e^+\leq 2$ and $e^-=1$ ($e^+=1$ and $1\leq e^-\leq 2$).
If a vertex is labelled with a suspension of a subshift of finite type and $A_{m\times m}$ is the non-negative integer matrix representing this subshift, then   
\begin{center}
$e^{+},\,e^{-}>0,$\\
$k+1-G^-\leq e^{+}\leq k+1,\,\text{with}\quad G^-=\sum^{e^-}_{i=1}g^-_i\quad \text{and}$\\
$k+1-G^+\leq e^{-}\leq k+1\,\text{with}\quad G^+=\sum^{e^+}_{j=1}g^+_j$
\end{center}
where $k=\di\ke((I-B):F^m_2\rightarrow F^m_2)$ and $b_{ij}=a_{ij}\mo 2$, $e^+(e^-)$ is the number of incoming (outgoing) edges incident to the vertex and $g^+_j(g^-_i)$ is the weight on an incoming (outgoing) edge incident to the vertex.
\item All vertices satisfy the Poincar\'e-Hopf condition (\ref{MainTheorem-eq3}) and (\ref{MainTheorem-eq4}).
\end{enumerate}    
\end{theorem}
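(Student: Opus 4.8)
The plan is to prove both implications by reducing the global statement to a collection of conditions that are local at each vertex (depending only on the isolating block of the corresponding chain recurrent piece) together with one genuinely global condition coming from the topology of $S^3$. For necessity I start from a given Smale flow $\phi_t$ on $S^3$ with Lyapunov function $f$ and read off conditions (1)--(3); for sufficiency I take an abstract graph satisfying (1)--(3), construct a basic block for each vertex, and glue the blocks along the edges to recover a flow on $S^3$. Throughout I use the filtration $M_0\subset\dots\subset M_n=S^3$ supplied by $f$, noting that $f$ decreases along orbits so that each $M_i$ is a sublevel set and each pair $(M_i,M_{i-1})$ is an isolating block for the basic set $\Lambda_i$.

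\textbf{Necessity.} The single global condition is the tree condition in (1): by Proposition~\ref{co}, $\beta(L)\le g(S^3)=0$, so $\beta(L)=0$ and the connected graph $L$ is a tree. The remaining assertions in (1)--(3) are local at a vertex $v$, extracted from the entrance/exit structure of the block $(M_i,M_{i-1})$, where the entrance set (flow entering from higher $f$) accounts for the incoming edges $e^+$ and the exit set for the outgoing edges $e^-$. For an index $0$ (resp. index $3$) singularity or an attracting (resp. repelling) periodic orbit the block is a ball or solid torus with a single boundary surface, forcing exactly one incident edge and fixing the sink/source labels. For an index $1$ singularity the local model $\dot x_1=x_1,\ \dot x_2=-x_2,\ \dot x_3=-x_3$ has connected entrance set (a lateral annulus) and exit set consisting of two disks, giving $e^+=1$ and $1\le e^-\le 2$ according to whether the two exit disks lie on one or two level surfaces; index $2$ is symmetric under time reversal. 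For a suspension of a subshift of finite type with matrix $A_{m\times m}$ of index $k$ I feed the Bowen--Franks computation of Theorem~\ref{teor1}, namely $H_{k+1}(M_i,M_{i-1};F_2)\cong\ke(I-B)$ of dimension $k$ and $H_k(M_i,M_{i-1};F_2)\cong F_2^m/(I-B)F_2^m$, into the long exact sequence of the pair; bounding the ranks of the connecting homomorphisms by the first Betti numbers of the entrance and exit surfaces, which are controlled by $G^+$, $G^-$ and the component counts $e^+$, $e^-$, yields $k+1-G^-\le e^+\le k+1$ and $k+1-G^+\le e^-\le k+1$. Finally, condition (3) is an Euler characteristic count: using $\chi(M)=\tfrac12\chi(\partial M)$ for the $3$-dimensional blocks and $\chi(\Sigma)=2-2g$ for a closed genus $g$ surface, the change in Euler characteristic across the block equals $\tfrac12(\chi^+-\chi^-)=e^+-e^--\sum g^+_j+\sum g^-_i$, which Poincar\'e--Hopf identifies with $(-1)^r$ for an index $r$ singularity and with $0$ for a periodic orbit or a subshift suspension (whose invariant set has zero Euler characteristic), giving (\ref{MainTheorem-eq3}) and (\ref{MainTheorem-eq4}).

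\textbf{Sufficiency.} Conversely, given $L$ satisfying (1)--(3), I realize each vertex by a compact oriented $3$-manifold $N_v$ carrying a flow whose maximal invariant set is the prescribed chain recurrent piece and whose boundary splits into entrance and exit surfaces of the prescribed genera $g^+_j$ and $g^-_i$. For singularities and periodic orbits these are the standard handle and round-handle blocks; for a subshift suspension the inequalities in (2) are precisely the condition under which a block with the prescribed boundary exists, and the relation in (3) guarantees that the boundary genera are consistent. I then glue the blocks along the edges of $L$ by orientation-reversing diffeomorphisms between matching-genus surfaces. Because $L$ is a tree with a single sphere-bounding cap at each sink and source, the assembled closed oriented $3$-manifold $M$ inherits a handle decomposition that can be organised from the leaves inward; the homology bounds in (2) together with the Poincar\'e--Hopf balance (3) force $H_*(M;F_2)\cong H_*(S^3;F_2)$ and, through the tree structure, simple connectivity, so that $M$ is the genus zero Heegaard manifold $S^3$ and $\phi_t$ is the desired Smale flow.

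The main obstacle I anticipate is twofold, and both parts concern the subshift vertices. On the necessity side, the delicate step is turning the Bowen--Franks relative homology into the \emph{sharp} inequalities: one must locate exactly how $\ke(I-B)$ and its image under the connecting maps sit inside the first homology of the boundary surfaces so that the dimension count produces the bound $k+1$ rather than something weaker. On the sufficiency side, the real work is the realization and embedding problem, namely building a subshift block with prescribed boundary genera whenever (2) holds and then certifying that the tree-assembled manifold is genuinely $S^3$ rather than merely a homology sphere. It is exactly here that the tree condition $\beta(L)=0$ is used decisively, and this accounts for why, in contrast with the $\ss$ case of Theorem~\ref{MainTheorem}, no exceptional equality case arises.
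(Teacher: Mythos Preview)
This theorem is not proved in the present paper; it is quoted from \cite{de1987smale} as background, so there is no in-paper proof to compare against directly. However, the paper's treatment of the analogous $\ss$ result (Propositions~\ref{pr5}--\ref{pr-suff} and the proof of Theorem~\ref{MainTheorem}) makes the intended strategy for $S^3$ visible, and your proposal diverges from it in a way that leaves a genuine gap in the sufficiency direction.

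Your plan is to build each block $N_v$ abstractly, glue the $N_v$'s along the tree $L$, and then argue that the resulting closed $3$-manifold $M$ must be $S^3$ because conditions (2) and (3) force $H_\ast(M;F_2)\cong H_\ast(S^3;F_2)$ and ``through the tree structure, simple connectivity''. This last step does not work. A tree of blocks glued along closed orientable surfaces of positive genus can produce any closed orientable $3$-manifold: already a two-vertex tree with one edge of weight $g$ is a genus-$g$ Heegaard splitting, and by varying the gluing diffeomorphism one obtains every lens space, Seifert manifold, etc. Neither the $F_2$-homology computation nor the tree structure of $L$ constrains the gluing maps enough to force $\pi_1(M)=0$; and even $H_\ast(M;F_2)\cong H_\ast(S^3;F_2)$ together with simple connectivity would be invoking the Poincar\'e conjecture, which the original 1987 argument certainly did not use.

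The approach actually taken in \cite{de1987smale}, and mirrored here for $\ss$ in Proposition~\ref{pr-suff} and the sufficiency proof of Theorem~\ref{MainTheorem}, is the opposite: one constructs each subshift block $X$ \emph{already embedded in $S^3$} so that $S^3\setminus X$ is a disjoint union of genuine handlebodies $H^{g^\pm_i}$ of the prescribed genera. The inequalities in (2) are exactly what make such an embedded block exist (this is the content of the round-handle and nilpotent-handle constructions). One then fills the complementary handlebodies with standard Smale flows realizing the remaining subtrees of $L$. No a posteriori identification of $M$ with $S^3$ is needed, because one never leaves $S^3$. Your necessity sketch is broadly on the right track, though the derivation of the sharp bounds $e^\pm\le k+1$ also uses that regular level complements in $S^3$ are of handlebody type (compare the use of Lemma~\ref{le4} and the Mayer--Vietoris arguments in Propositions~\ref{pr5}--\ref{pr7}), not merely rank estimates in the long exact sequence of the pair.
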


%%%%%%%%%%%%%%%%%%%%%%%%%%%%%%%%%%%%%%%%%%%%%%%%%%%%%%%%%%%%%%%%%%%%%%%%%%%%%%%%%%%%%%%%%%%%%%%%%%%%%%%%%%%%%%%%%%%%%%
%%%%%%%%%%%%%%%%%%%%%%%%%%%%%%%%%%%%%%%%%%%%%%%%%%%%%%%%%%%%%%%%%%%%%%%%%%%%%%%%%%%%%%%%%%%%%%%%%%%%%%%%%%%%%%%%%%%
\section{Surfaces Embedded in $\ss$}\label{sec3}

\subsection{Some topological facts in $\ss$}
In our work it is important to have a characterization of the generators of the two dimensional homology of $\ss$.
\begin{lemma}\label{le16}
Let $S$ be an orientable closed connected surface in $\ss$ such that $S$ is non separating and $i:S\rightarrow\ss$ is the inclusion map. Then the induced homomorphism  
\begin{center}
$i_\ast: H_2(S)\rightarrow H_2(\ss)$
\end{center} 
is an isomorphism.
\end{lemma}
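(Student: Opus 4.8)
The plan is to identify $i_{\ast}[S]$ with the class in $H_{2}(\ss)$ carried by the embedded surface $S$ and to detect that this class is nonzero by intersecting it with a suitable loop. Since $S$ is closed, connected and orientable, $H_{2}(S)\cong F_{2}$ is generated by the fundamental class $[S]$, and $i_{\ast}[S]$ is precisely the image fundamental class; also $H_{2}(\ss)\cong F_{2}$ (K\"unneth, or the universal coefficient theorem applied to $H_{\ast}(\ss;\Z)$). Hence it suffices to show $i_{\ast}[S]\neq 0$, for then a homomorphism $F_{2}\to F_{2}$ sending a generator to the nonzero element is an isomorphism. The tool for detecting nontriviality is mod-$2$ Poincar\'e duality: since $\ss$ is closed, the intersection pairing $H_{1}(\ss)\times H_{2}(\ss)\to H_{0}(\ss)=F_{2}$ is nondegenerate, and on a transverse pair it is computed by the mod-$2$ count of intersection points. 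Note also that since both $S$ and $\ss$ are orientable, $S$ is two-sided and has a product tubular neighborhood $N\cong S\times(-1,1)$ with $S=S\times\{0\}$; this two-sidedness is what makes the construction below possible, and is the reason orientability is hypothesized.

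The key step is to build a loop meeting $S$ transversely in exactly one point. Fix $s_{0}\in S$ and set $x_{\pm}=(s_{0},\pm\tfrac12)\in N\setminus S$, one point on each side of $S$. The vertical arc $\tau(t)=(s_{0},t)$, $t\in[-\tfrac12,\tfrac12]$, runs from $x_{-}$ to $x_{+}$ and crosses $S$ transversely in the single point $(s_{0},0)$. Because $S$ is non-separating, $\ss\setminus S$ is connected, hence path-connected, so there is an arc $\rho$ in $\ss\setminus S$ from $x_{+}$ back to $x_{-}$. Concatenating gives a loop $\ell=\tau\ast\rho$; after a small perturbation supported away from the crossing point it is a smooth loop transverse to $S$ with $\#(\ell\cap S)=1$.

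It follows that $[\ell]\cdot i_{\ast}[S]=1\neq 0$ in $F_{2}$, so $i_{\ast}[S]\neq 0$ in $H_{2}(\ss)\cong F_{2}$; since $[S]$ generates $H_{2}(S)\cong F_{2}$ and its image is the nonzero element, $i_{\ast}$ is an isomorphism. (The same argument gives the statement over $\Z$ as well: the geometric intersection number equals the algebraic one up to sign, and unimodularity of the pairing $H_{1}(\ss;\Z)\times H_{2}(\ss;\Z)\to\Z$ with both groups $\cong\Z$ forces $i_{\ast}[S]$ to be a generator. Equivalently, one may phrase everything cohomologically: $\mathrm{PD}(i_{\ast}[S])\in H^{1}(\ss;F_{2})=\Hom(\pi_{1}\ss,F_{2})$ is the homomorphism assigning to a loop its mod-$2$ intersection number with $S$, and $\ell$ exhibits it as nonzero.) There is no serious obstacle here; the only points requiring a little care are genuinely invoking the "non-separating" hypothesis to route $\rho$ through the connected complement, and the routine transversality bookkeeping, everything else being Poincar\'e duality.
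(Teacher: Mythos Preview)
Your proof is correct and follows essentially the same approach as the paper's: both construct a loop meeting $S$ transversely in a single point by exploiting the non-separating hypothesis to close up a normal arc through the tubular neighborhood, and then use the intersection pairing to conclude $i_\ast[S]\neq 0$ in $H_2(\ss)\cong F_2$. Your write-up is somewhat more detailed in justifying the two-sidedness and the construction of the loop, but the argument is the same.
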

\begin{proof}
Since $S$ is non separating, there are two points $A$ and $B$ on a tubular neighborhood of $S$ which can be connected by a path that intersects $S$ in one single point $C$. This path can be extended to a loop which intersects $S$ in $C$. It follows that the one dimensional homology class $[\gamma]$ and the two dimensional homology class $[S]$ have a non-zero intersection number. Therefore, both $[\gamma]$ and $[S]$ are non-trivial elements in the homology of $\ss$. Since, $H_2(\ss)\cong F_2$, one has that $i_\ast$ is an isomorphism. 
\end{proof}

\begin{definition} Let $S$ be a closed surface in $\ss$ and let $i:S\rightarrow \ss$ be the inclusion embedding. Let
\begin{center}
$i_\ast: \pi_1(S)\rightarrow \pi_1(\ss)$
\end{center}
 be the homomorphism induced by $i$. We say that $S$ is $\pi_1$-\textbf{trivial} in $\ss$ if $i_\ast$ is trivial and $\pi_1$-\textbf{non-trivial} otherwise.  
\end{definition}

The study of embedded surfaces in $\ss$ is essential to our work. Some of these embeddings have been previously studied, such as, the embedding of the sphere $\mathbb{S}^{2}$ in \cite{bonatti2000knots} and the embedding of the torus in  \cite{yu2012}. In this section, we present other embeddings that are relevant for our work. The following result of the embedding of $\mathbb{S}^{2}$ in $\ss$ is a well known result with a nice proof  in \cite{beguin2002flots}. 
 
\begin{proposition}\label{pr1}
Let $S$ be a differential embedding of the $2$-sphere in $\ss$, then
\begin{enumerate}
\item either $S$ bounds a $3$-ball,
\item or $S$ is homotopic to a fiber $S^{2}\times \left\{t\right\}$, with $t\in S^{1}$.
\end{enumerate}
\end{proposition}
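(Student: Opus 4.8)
\textbf{Proof plan for Proposition~\ref{pr1}.}

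The plan is to reduce the statement to a dichotomy about whether the embedded sphere $S$ separates $\ss$ or not, and then to analyze each case. First I would observe that since $H_2(\ss;F_2)\cong F_2$ and the mod-$2$ fundamental class of a $2$-sphere vanishes in $H_1$ considerations but its normal data is what matters, either $S$ is null-homologous, in which case it separates $\ss$ into two pieces $P_1$ and $P_2$ with $\partial P_1=\partial P_2=S$, or $S$ is non-separating and hence a generator of $H_2(\ss;F_2)$ by Lemma~\ref{le16} (after noting that the orientability hypothesis there is automatic for $S^2$). These are the two cases that will produce the two conclusions of the proposition.

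In the separating case, I would appeal to the fact that $\ss$ is an irreducible-after-puncture manifold in the appropriate sense: by Alexander's theorem a smoothly embedded $2$-sphere in $\mathbb{R}^3$ (or $\mathbb{S}^3$) bounds a ball, and $\ss$ has universal cover $\mathbb{S}^2\times\R$ in which every embedded $2$-sphere bounds; pushing this down, one of the two complementary pieces, say $P_1$, must be simply connected. A simply connected compact $3$-manifold with $2$-sphere boundary is a $3$-ball (this is where one invokes the Poincar\'e conjecture / the structure of irreducible $3$-manifolds, or more elementarily the fact that $\ss$ contains no fake balls), giving conclusion (1). In the non-separating case, $S$ meets a fiber $S^2\times\{t\}$ algebraically once mod $2$; cutting $\ss$ along $S$ yields a compact $3$-manifold $W$ with two boundary spheres, and $W$ has the homology of a product $S^2\times[0,1]$ since we have removed the generator of $H_2$. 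One then argues $W$ is simply connected (the loop $\gamma$ from the proof of Lemma~\ref{le16} generates $\pi_1(\ss)\cong\Z$ and is killed by cutting along $S$), hence $W\cong S^2\times[0,1]$ by the characterization of the product cobordism, and regluing shows $S$ is isotopic, hence homotopic, to a fiber, giving conclusion~(2).

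The main obstacle I anticipate is the non-separating case: establishing that the cut-open manifold $W$ is genuinely $S^2\times[0,1]$ rather than some twisted or exotic $S^2$-bundle piece requires knowing that $W$ is both simply connected and irreducible with product homology, and then quoting the classification of such cobordisms between spheres. This is precisely the kind of $3$-manifold rigidity input (irreducibility of $\ss$, triviality of $h$-cobordisms between $2$-spheres) where one wants to cite a clean reference rather than reprove it; the excerpt already points to \cite{beguin2002flots} for exactly this, so in practice I would follow that source, and present the homological bookkeeping with $F_2$ coefficients (as the paper does throughout) to identify which alternative holds. The separating case, by contrast, is routine given Alexander's theorem and the absence of fake $3$-balls.
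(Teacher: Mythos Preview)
The paper does not give its own proof of this proposition: it is stated as a well-known fact and the reader is referred to \cite{beguin2002flots} for a ``nice proof.'' Your outline---splitting into the separating versus non-separating cases and invoking Alexander's theorem together with the product-cobordism characterization---is a correct sketch of the standard argument, and you yourself already recognize that the non-separating case is exactly where one wants to cite \cite{beguin2002flots} rather than reprove the underlying $3$-manifold rigidity; so in substance you are doing precisely what the paper does.
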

Now let $\phi_{t}$ be a Smale flow on $\ss$ with Lyapunov function $f:\ss\rightarrow \R$ and let $L$ be the Lyapunov graph associated to $f$. Let $c\in\re$ be a regular value, then $f^{-1}(c)$ is the disjoint collection of orientable, closed connected surfaces. Let $\Te\subset f^{-1}(c)$ be a connected, closed surface of genus $g$. Then by the topology of $\ss$, $\Te$ can be  a non separating surface in $\ss$.

If $\Te$ is non separating, there exists another connected component of the regular level set $\Tu$ non-parallel to $\Te$, such that $\ss-(\Te\sqcup\Tu)$ has two components. We denote their closures by $M_1$ and $M_2$. Therefore, $M_1$ and $M_2$ satisfy 
\begin{center}
$M_1\cup M_2=\ss\quad\text{and}\quad M_1\cap M_2=\Te\sqcup\Tu$
\end{center}
Also, $\partial M_1=\partial M_2=\Te\sqcup\Tu$. The following lemma gives us homological information on $M_1$ and $M_2$.
\begin{lemma}\label{le14}
$M_1$ and $M_2$ as defined above satisfy: 
\begin{enumerate}
\item $H_2(M_i)\cong F_2$,
\item $\di H_1(M_i)=\widehat{g}+g$.
\end{enumerate}
\end{lemma}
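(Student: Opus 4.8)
The goal is to compute the homology of $M_1$ and $M_2$, where $\ss = M_1 \cup M_2$, $M_1 \cap M_2 = \partial M_1 = \partial M_2 = \Te \sqcup \Tu$, with $\Te$ of genus $g$ and $\Tu$ of genus $\widehat g$. Everything is with $F_2$-coefficients. The natural tool is the Mayer--Vietoris sequence of the pair $(M_1, M_2)$ covering $\ss$, combined with what we already know: $H_*(\ss; F_2)$ has Betti numbers $1,1,1,1$ in degrees $0,1,2,3$, and by Lemma~\ref{le16} the inclusion of the non-separating surface $\Te$ (and likewise $\Tu$, being homologous to $\Te$) induces an isomorphism $H_2(\Te) \to H_2(\ss)$.

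**Main steps.** First I would write down the Mayer--Vietoris sequence
\[
\cdots \to H_k(\Te \sqcup \Tu) \to H_k(M_1) \oplus H_k(M_2) \to H_k(\ss) \to H_{k-1}(\Te \sqcup \Tu) \to \cdots
\]
and feed in $H_*(\Te \sqcup \Tu)$: in degree $2$ it is $F_2^2$ (one class per component), in degree $1$ it is $F_2^{2g} \oplus F_2^{2\widehat g} = F_2^{2(g+\widehat g)}$, in degree $0$ it is $F_2^2$. For claim (1), I would examine the top of the sequence: $H_3(\ss) = F_2 \to H_2(\Te\sqcup\Tu) = F_2^2 \to H_2(M_1)\oplus H_2(M_2) \to H_2(\ss) = F_2 \to H_1(\Te\sqcup\Tu)$. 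The connecting map $H_3(\ss) \to H_2(\Te \sqcup \Tu)$ sends the fundamental class to $[\Te] + [\Tu]$ (the boundary, mod 2, of the fundamental classes of the two pieces), which is nonzero, so it is injective with image the diagonal $F_2$. The map $H_2(\ss) \to H_1(\Te\sqcup\Tu)$ is zero because, by Lemma~\ref{le16}, the generator of $H_2(\ss)$ is already hit by $[\Te] \in H_2(\Te) \subset H_2(\Te \sqcup \Tu)$, hence lies in the image of $H_2(M_1) \oplus H_2(M_2)$. Thus $H_2(M_1) \oplus H_2(M_2) \cong H_2(\Te \sqcup \Tu)/(\text{diagonal}) \oplus H_2(\ss) \cong F_2 \oplus F_2$, and a symmetry/orientation argument (each $M_i$ is a compact $3$-manifold with nonempty boundary, so $H_3(M_i) = 0$ and $H_2(M_i; F_2)$ is detected by the boundary surface it contains, which carries a nonzero class) forces one copy of $F_2$ into each summand, giving $H_2(M_i) \cong F_2$.

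**Computing $H_1$.** For claim (2), I continue down the sequence:
\[
0 = H_2(\ss)\!\to\! H_1(\Te\sqcup\Tu) \xrightarrow{\iota} H_1(M_1)\oplus H_1(M_2) \!\to\! H_1(\ss)=F_2 \xrightarrow{\partial} H_0(\Te\sqcup\Tu)=F_2^2 \!\to\! H_0(M_1)\oplus H_0(M_2)=F_2^2.
\]
The map $H_0(\Te \sqcup \Tu) \to H_0(M_1) \oplus H_0(M_2)$ sends each of the two surface-components' classes to $([\,pt\,], [\,pt\,])$ in both $M_1$ and $M_2$ (each $\Te, \Tu$ lies in both and $M_1, M_2$ are connected), so its kernel is the diagonal $F_2$; hence $\partial: F_2 \to F_2^2$ is injective onto that diagonal, so the map $H_1(M_1)\oplus H_1(M_2) \to H_1(\ss)$ is zero, and $\iota$ is surjective. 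Also $\iota$ is injective since the preceding term $H_2(\ss)$ maps to zero there. Therefore $H_1(M_1) \oplus H_1(M_2) \cong H_1(\Te \sqcup \Tu) = F_2^{2(g+\widehat g)}$, so $\dim H_1(M_1) + \dim H_1(M_2) = 2(g + \widehat g)$. To pin down each summand as $g + \widehat g$, I would invoke the ``half lives, half dies'' principle (Poincaré--Lefschetz duality for the compact $3$-manifold $M_i$ with boundary $\partial M_i = \Te \sqcup \Tu$): the image of $H_1(\partial M_i; F_2) \to H_1(M_i; F_2)$ has dimension exactly $\tfrac12 \dim H_1(\partial M_i; F_2) = \tfrac12 \cdot 2(g+\widehat g) = g + \widehat g$. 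Combined with the fact that $\iota$ restricted to the $\Te \sqcup \Tu$ boundary of $M_i$ surjects onto $H_1(M_i)$ (which follows from the surjectivity of $\iota$ above, since each component of $\partial M_i$ maps into $M_i$), this gives $\dim H_1(M_i) = g + \widehat g$ for each $i$.

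**Anticipated obstacle.** The routine part is the Mayer--Vietoris bookkeeping; the delicate point is justifying that each of the two $F_2$ summands in $H_2$ and each of the two $(g+\widehat g)$-dimensional pieces in $H_1$ is distributed \emph{symmetrically}, one to $M_1$ and one to $M_2$, rather than both landing in one factor. The cleanest fix is the duality argument sketched above applied separately to each $M_i$ — i.e. using that $M_i$ is itself a compact $3$-manifold with boundary $\Te \sqcup \Tu$ and applying ``half lives half dies'' to control $\dim \operatorname{im}(H_1(\partial M_i) \to H_1(M_i))$, then checking this image is everything. I would present the Mayer--Vietoris computation to get the sum, then the Lefschetz-duality half-dimension count to split it.
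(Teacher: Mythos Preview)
Your argument is correct, but it is organized differently from the paper's. The paper uses Mayer--Vietoris only to obtain the inequality $\dim H_2(M_1)+\dim H_2(M_2)\le 2$, and then works with each $M_i$ separately via the long exact sequence of the pair $(M_i,\partial M_i)$: that sequence plus Poincar\'e--Lefschetz duality (in the form $H_j(M_i,\partial M_i)\cong H_{3-j}(M_i)^\ast$, Proposition~\ref{pr2}) gives $\dim H_2(M_i)\ge 1$ and then the equation $\dim H_1(M_i)=g+\widehat g$ directly, without ever needing to split a sum. You instead push Mayer--Vietoris harder, using Lemma~\ref{le16} to pin down the connecting maps and obtain the \emph{exact} sums $\dim H_2(M_1)+\dim H_2(M_2)=2$ and $\dim H_1(M_1)+\dim H_1(M_2)=2(g+\widehat g)$; you then split the first sum by factoring the isomorphism of Lemma~\ref{le16} through each $H_2(M_i)$, and the second by combining the surjectivity of $H_1(\partial M_i)\to H_1(M_i)$ (extracted from the Mayer--Vietoris isomorphism) with half-lives--half-dies. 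Both routes ultimately rest on Poincar\'e--Lefschetz duality for the pieces; the paper's is a bit more economical since it never has to argue about how a total splits between $M_1$ and $M_2$, while yours exploits Lemma~\ref{le16} more fully and makes the role of the non-separating surfaces explicit.
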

\begin{proof}
 We consider the Mayer-Vietoris exact sequence of the pair 
$(M_1,M_2)$,
\begin{equation*}\label{eq33}
0\rightarrow H_{3}(M_{1})\oplus H_{3}(M_{2})\rightarrow H_{3}(\ss)\rightarrow H_{2}(M_{1}\cap M_{2})\rightarrow H_{2}(M_{1})\oplus H_{2}(M_{2})\stackrel{\alpha}{\rightarrow} H_{2}(\ss)\rightarrow \ldots
\end{equation*}
 Since, $M_1$ and $M_2$ are compact manifolds with boundary, then $H_{3}(M_1)=0=H_{3}(M_2)$. On the other hand, $H_{3}(\ss)\cong F_2\cong H_{2}(\ss)$ and as $M_1\cap M_2$ has two components, then $H_2(M_1\cap M_2)\cong F_2^2$. Therefore, one has
\begin{equation}\label{eq14}
\di H_2(M_1)+\di H_2(M_2)\leq 2.
\end{equation}
On the other hand, we consider the exact sequence of the pair $(M_i,\partial M_i)$,
\begin{equation}\label{eq34}
0\rightarrow H_3(M_i,\partial M_i)\rightarrow H_2(\partial M_i)\rightarrow H_2(M_i)\rightarrow H_2(M_{i},\partial M_i).
\end{equation}
Since, $M_i$ is a $3$-manifold with boundary, then $H_{3}(M_i,\partial M_i)\cong F_2$. Also $\partial M_i$ has two components, and hence $H_2(\partial M_i)\cong F_2^2$. Therefore, by exactness
\begin{equation}\label{eq15}
\dim H_2(M_i)\geq 1.
\end{equation}
By (\ref{eq14}) and (\ref{eq15}), one concludes that
\begin{center}
$H_{2}(M_i)\cong F_2.$
\end{center}
So, we obtain the following exact sequence from (\ref{eq34})
\begin{equation*}
0\rightarrow H_2(M_i,\partial M_i) \rightarrow H_1(\partial M_i)\rightarrow H_1(M_i)\rightarrow H_1(M_i,\partial M_i)\rightarrow \widetilde{H}_{0}(\partial M_i)\rightarrow 0.
\end{equation*}
Therefore, by exactness
\begin{equation}\label{eq16}
\di H_1(M_i)=\di H_1(\partial M_i)-\di H_2(M_i,\partial M_i)+\di H_1(M_i,\partial M_i)-\di \widetilde{H}_0(\partial M_i).
\end{equation}
At this point, one needs to consider the following proposition, which follows from the universal coefficient theorem.

\begin{proposition}\label{pr2}
Let $(X,A)$ be a pair of topological spaces, and let $F$ be a field. There is a natural isomorphism
$$\beta:H^n(X,A;F)\rightarrow Hom_{F}(H_n(X,A;F),F)\cong H_n(X,A;F)^\ast$$
\end{proposition}

Since, $\partial M_i\cong\Te\sqcup\Tu$, then $\dim H_1(\partial M_i)=2g+2\hat{g}$ and $\widetilde{H}_0(\partial M_i)\cong F_2$. On the other hand, by Proposition~\ref{pr2} and by the Poincar\'e duality, one has that
\begin{center}
$H_1(M_i,\partial M_i)\cong (H_2(M_i))^\ast\quad\text{and}\quad H_2(M_i,\partial M_i)\cong\left(H_{1}(M_i)\right)^\ast$.
\end{center}
In $(\ref{eq16})$ one has
\begin{equation}
\di H_1(M_i)=g+\hat{g}.
\end{equation} 
\end{proof}
If $\Te$ is separating, this embedding splits 
$\ss$ in two submanifolds $M_{1}$ and $M_{2}$ such that:
\begin{center} 
$\ss = M_{1}\cup M_{2}\quad\text{and}\quad M_{1}\cap M_{2}=\Te. $
\end{center}
\begin{lemma}\label{le15}
Let $M_1$ and $M_2$ be defined as above. Then
\begin{enumerate}
\item $\di H_2(M_1)+\di H_2(M_2)\leq 1$,
\item If $H_2(M_1)=0$, then $\di H_1(M)=g$,
\item If $H_2(M_1)\cong F_2$, then $\di H_1(M)=g+1$.
\end{enumerate}
Analogously for $M_2$.
\end{lemma}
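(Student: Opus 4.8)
The plan is to mimic the Mayer–Vietoris and long-exact-sequence computation already carried out in Lemma~\ref{le14}, but adapted to the separating case where $\partial M_i = \Te$ is a single connected surface rather than $\Te \sqcup \Tu$. First I would set up the Mayer–Vietoris sequence of the pair $(M_1, M_2)$,
\begin{equation*}
0\rightarrow H_{3}(M_{1})\oplus H_{3}(M_{2})\rightarrow H_{3}(\ss)\rightarrow H_{2}(\Te)\rightarrow H_{2}(M_{1})\oplus H_{2}(M_{2})\rightarrow H_{2}(\ss)\rightarrow\ldots
\end{equation*}
Since $M_1,M_2$ are compact $3$-manifolds with nonempty boundary, $H_3(M_i)=0$; also $H_3(\ss)\cong F_2\cong H_2(\ss)$ and $H_2(\Te)\cong F_2$ because $\Te$ is a single connected closed orientable surface. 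Exactness at $H_2(M_1)\oplus H_2(M_2)$ then forces $\di H_2(M_1)+\di H_2(M_2)\leq \di H_2(\Te) + \di(\text{image from }H_3(\ss)) \leq 1+\, \text{(at most }0\text{, since the map }H_3(\ss)\to H_2(\Te)\text{ is injective once }H_3(M_i)=0)$, giving item (1). One should double-check the bookkeeping here: the map $H_3(\ss)\to H_2(\Te)$ is injective (its kernel is the image of $H_3(M_1)\oplus H_3(M_2)=0$), so its image has dimension $1$, leaving a quotient of $H_2(\Te)$ of dimension $0$ mapping into $H_2(M_1)\oplus H_2(M_2)$; combined with the surjection onto $\ker(\alpha)\subseteq H_2(\ss)\cong F_2$, one concludes $\di H_2(M_1)+\di H_2(M_2)\leq 1$.

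Next, for the homology of $M=M_1$ (with $M_2$ analogous), I would use the long exact sequence of the pair $(M_i,\partial M_i)$ exactly as in~(\ref{eq34}), now with $\partial M_i = \Te$ connected, so $\di H_1(\partial M_i) = 2g$ and $\widetilde H_0(\partial M_i)=0$. Then Proposition~\ref{pr2} together with Poincar\'e–Lefschetz duality gives $H_1(M_i,\partial M_i)\cong (H_2(M_i))^\ast$ and $H_2(M_i,\partial M_i)\cong (H_1(M_i))^\ast$, so from the exact sequence
\begin{equation*}
0\rightarrow H_2(M_i,\partial M_i)\rightarrow H_1(\partial M_i)\rightarrow H_1(M_i)\rightarrow H_1(M_i,\partial M_i)\rightarrow 0
\end{equation*}
(the last term $\widetilde H_0(\partial M_i)$ vanishes) one gets the Euler-characteristic identity $\di H_1(M_i) = \di H_1(\partial M_i) - \di H_2(M_i,\partial M_i) + \di H_1(M_i,\partial M_i) = 2g - \di H_1(M_i) + \di H_2(M_i)$, hence $2\di H_1(M_i) = 2g + \di H_2(M_i)$, i.e. $\di H_1(M_i) = g + \tfrac{1}{2}\di H_2(M_i)$. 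Since $\di H_2(M_i)\in\{0,1\}$ by item (1) and $\di H_1(M_i)$ is an integer, the case $H_2(M_1)=0$ yields $\di H_1(M_1)=g$ (item (2)), and the case $H_2(M_1)\cong F_2$ yields $\di H_1(M_1)=g+1$ (item (3)).

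The main obstacle I anticipate is not the algebra — which is essentially the same manipulation as in Lemma~\ref{le14} — but making sure the duality input is applied correctly for a manifold with connected boundary, and in particular verifying that $H_2(M_i,\partial M_i)\cong H^1(M_i)\cong H_1(M_i)^\ast$ and $H_1(M_i,\partial M_i)\cong H^2(M_i)\cong H_2(M_i)^\ast$ over the field $F_2$ (Poincar\'e–Lefschetz duality for the $3$-manifold $M_i$ with boundary, combined with Proposition~\ref{pr2}). A secondary subtlety is confirming that the map $H_1(M_i)\to H_1(M_i,\partial M_i)$ is indeed surjective — equivalently that $\widetilde H_0(\partial M_i)=0$, which holds precisely because $\Te$ is connected — so that the four-term exact sequence above is exact on the right. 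Once these two points are pinned down, the parity argument forcing $\di H_2(M_i)$ to be even-or-odd matched to the integrality of $\di H_1(M_i)$ closes the proof cleanly.
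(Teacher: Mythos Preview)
Your Mayer--Vietoris argument for item~(1) is fine and matches the paper's. The error is in your treatment of the long exact sequence of the pair $(M_i,\partial M_i)$: the four-term sequence you wrote,
\[
0\rightarrow H_2(M_i,\partial M_i)\rightarrow H_1(\partial M_i)\rightarrow H_1(M_i)\rightarrow H_1(M_i,\partial M_i)\rightarrow 0,
\]
is not exact at the left end in general. The connecting map $H_2(M_i,\partial M_i)\to H_1(\partial M_i)$ has kernel equal to the image of $H_2(M_i)\to H_2(M_i,\partial M_i)$, and that map is \emph{injective} (since $H_3(M_i,\partial M_i)\cong F_2\cong H_2(\partial M_i)$, so $H_2(\partial M_i)\to H_2(M_i)$ is zero), not zero. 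Hence when $H_2(M_i)\cong F_2$ you have dropped a nontrivial term. The correct five-term sequence is
\[
0\rightarrow H_2(M_i)\rightarrow H_2(M_i,\partial M_i)\rightarrow H_1(\partial M_i)\rightarrow H_1(M_i)\rightarrow H_1(M_i,\partial M_i)\rightarrow 0,
\]
and the alternating sum together with the duality identifications gives $g=\dim H_1(M_i)-\dim H_2(M_i)$, i.e.\ $\dim H_1(M_i)=g+\dim H_2(M_i)$, which is exactly items~(2) and~(3).

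Your own formula $2\dim H_1(M_i)=2g+\dim H_2(M_i)$ should have been a red flag: if $\dim H_2(M_i)=1$ it forces $\dim H_1(M_i)=g+\tfrac{1}{2}$, so rather than yielding $g+1$ by a ``parity argument'' it would actually \emph{exclude} the case $H_2(M_i)\cong F_2$ altogether, contradicting the statement you are trying to prove. Once you restore the missing $H_2(M_i)$ term, the parity issue disappears and no case analysis beyond reading off $\dim H_2(M_i)\in\{0,1\}$ is needed.
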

\begin{proof}
Consider the Mayer-Vietoris exact sequence of $M_1$ and $M_2$. 
\begin{equation}\label{seq9}
0\rightarrow H_{3}(M_{1})\oplus H_{3}(M_{2})\rightarrow H_{3}(\ss)\rightarrow H_{2}(\T)\rightarrow H_{2}(M_{1})\oplus H_{2}(M_{2})\rightarrow H_{2}(\ss)\rightarrow 
\end{equation}
 Since, $M_1$ and $M_2$ are compact manifolds with boundary, then $H_3(M_1)=0= H_3(M_2)$. Also, $M_1\cap M_2$ has one component, hence $H_2(M_1\cap M_2)\cong F_2$. Therefore, by the exactness of sequence $(\ref{seq9})$ one has
\begin{equation}\label{eq4}
\di H_{2}(M_{1})+\di H_{2}(M_{2})\leq 1.
\end{equation}
Hence, there are the following possibilities: 
\begin{enumerate}
\item[(a)] either $H_{2}(M_{1})=0$ and $H_{2}(M_{2})=0$ 
\item[(b)] or $H_{2}(M_{1})\cong F_2$ and $H_{2}(M_{2})=0$
\item[(c)] or $H_{2}(M_{1})=0$ and $H_{2}(M_{2})\cong F_2$.
\end{enumerate}
Now consider the exact sequence of the pair $(M_i,\partial M_i)$, 
\begin{equation}\label{seq10}
0\rightarrow H_3(M_i,\partial M_i)\rightarrow H_2(\partial M_i)\rightarrow H_2(M_i)\rightarrow H_2(M_i,\partial M_i)\rightarrow \ldots
\end{equation}
Since, $M_i$ is a $3$-manifold with boundary $H_3(M_i,\partial M_i)\cong F_2$. Also, $\partial M_i$ has one component, hence $H_2(\partial M_i)\cong F_2$. Therefore,
$H_3(M_i,\partial M_i)$ is isomorphic to $H_2(\partial M_i)$. 
Thus, we can write the exact sequence $(\ref{seq10})$ as
\begin{equation*}
0\rightarrow H_2(M_i)\rightarrow H_2(M_i,\partial M_i) \rightarrow H_1(\partial M_i)\rightarrow H_1(M_i)\rightarrow H_1(M_i,\partial M_i)\rightarrow  0
\end{equation*}
which implies that 
\begin{equation}\label{eq1}
2g=\di H_{2}(M_i,\partial M_i)-\di H_{2}(M_{i})+\di H_1(M_i)-\di H_1(M_i,\partial M_i).
\end{equation}
By Proposition~\ref{pr2} and by Poincar\'e duality, one has that
\begin{center}
$H_1(M_i,\partial M_i)\cong \left(H_{2}(M_i)\right)^\ast \cong \left(H_{2}(M_i)\right)\quad\text{and}\quad H_2(M_i,\partial M_i)\cong\left(H_{1}(M_i)\right)^\ast \cong \left(H_{1}(M_i)\right).$
\end{center}
Hence, equation $(\ref{eq1})$ can be written as
\begin{equation}\label{eq2}
g=\di H_{1}(M_{i})-\di H_{2}(M_{i}).
\end{equation}
Now, one concludes the proof with a case analysis. 
 
\textbf{Case $(a)$:} Suppose $H_2(M_1)=0$ and $H_2(M_2)=0$.

 Hence, by equation $(\ref{eq2})$ one has for $i=1,2$
\begin{center}
$g=\dim H_{1}(M_{i}).$
\end{center} 

\textbf{Case $(b)$:} Suppose $H_2(M_1)\cong F_2$ and $H_2(M_2)=0$.

\begin{enumerate}
\item[(i)] By the prior case, if $H_{2}(M_{2})=0$ then $H_{1}(M_{2})\cong F^g_2$ hence,  $\di H_1(M_2)=g$
\item[(ii)]In the case $H_{2}(M_{1})\cong F_2$,
equation $(\ref{eq2})$ is written as

$g=\dim H_{1}(M_{1})-1$,

which implies that $\di H_1(M_1)=g+1$.

\end{enumerate}
 
\textbf{Case $(c)$:} Suppose $H_2(M_1)=0$ and $H_2(M_2)\cong F_2$.

In a similar fashion to the previous case one has that
\begin{enumerate}
\item[(i)] if $H_2(M_1)=0$, then $\di H_1(M_1)=g$,
\item[(ii)] if $H_2(M_2)\cong F_2$, then $\di H_1(M_2)=g+1$.
\end{enumerate}
\end{proof}
\begin{definition}
A manifold $M^{3}$ is of \textbf{handlebody} \textbf{type} whenever $\partial M^3$ is homeomorphic to a closed and orientable surface of genus $g$, for some $g\geq 0$ and verifies
$$H_2(M^3)=0\quad\text{and}\quad \di H_1(M^3)=g.$$
The positive number $g$ is defined as the genus of the manifold of handlebody type. 
\end{definition}

\begin{remark}\label{ob1}
By inequality $(\ref{eq4})$ and Lemma~\ref{le15}, at least one of $M_{1}$ or $M_{2}$ is of handlebody type.
\end{remark}
\begin{lemma}\label{le1} 
With the above notation, $M_{2}$ is not a manifold of handlebody type if and only if
\begin{center}
$H_2(M_2)\cong F_2.$
\end{center}                         
\end{lemma} 
\begin{proof} 
%\textit{Necessity.} 
$ (\Rightarrow)$ By inequality $(\ref{eq4})$ one has that $\di H_{2}(M_{2})\leq 1$, hence we have two possibilities. If $H_{2}(M_{2})= 0$, by Case $(b)$ in the proof of Lemma~\ref{le15}, it follows that $\di H_1(M_2)=g$. Hence, $M_2$ is of handlebody type contradicting the hypothesis. Therefore, $H_2(M_1)\cong F_2$.

$(\Leftarrow)$ Since, $H_2(M_2)\cong F_2$ then by definition $M_2$ is not of handlebody type.
\end{proof} 

\begin{lemma}\label{le13}
Let $N$ be a $3$-submanifold of $\ss$, such that $N$ is of handlebody type and $\partial N$ is a $\pi_1$-trivial surface in $\ss$. Then the inclusion $j:N\rightarrow \ss$ induces a homomorphism
\begin{center}
$j_\ast:H_1(N)\rightarrow H_1(\ss)$
\end{center}
which is trivial.
\end{lemma}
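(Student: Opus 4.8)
The plan is to show that the inclusion-induced map $H_1(\partial N)\to H_1(N)$ is surjective and then to exploit the $\pi_1$-triviality of $\partial N$ to kill everything.

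First I would examine the long exact sequence of the pair $(N,\partial N)$ with $F_2$-coefficients,
$$H_2(N,\partial N)\to H_1(\partial N)\to H_1(N)\to H_1(N,\partial N)\to \widetilde H_0(\partial N)\to\ldots$$
and compute $H_1(N,\partial N)$. Since $N$ is a compact $3$-manifold with boundary, Poincar\'e--Lefschetz duality gives $H_1(N,\partial N)\cong H^2(N)$, and by Proposition~\ref{pr2} (universal coefficients over the field $F_2$) $H^2(N)\cong (H_2(N))^\ast$. Because $N$ is of handlebody type, $H_2(N)=0$, hence $H_1(N,\partial N)=0$ and the map $H_1(\partial N)\to H_1(N)$ induced by the inclusion $\partial N\hookrightarrow N$ is onto. (This is consistent with $\di H_1(N)=g\le 2g=\di H_1(\partial N)$.)

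Next I would note that the composition $H_1(\partial N)\to H_1(N)\stackrel{j_\ast}{\to}H_1(\ss)$ is, by functoriality, exactly the homomorphism on first homology induced by the inclusion $\partial N\hookrightarrow\ss$ (which factors as $\partial N\hookrightarrow N\hookrightarrow\ss$). Since $\partial N$ is $\pi_1$-trivial in $\ss$, this inclusion induces the trivial map $\pi_1(\partial N)\to\pi_1(\ss)$; passing to abelianizations shows the induced map on integral $H_1$ is trivial, and tensoring with $F_2$ (legitimate since $H_0$ is free, so no $\mathrm{Tor}$ term intervenes) shows the induced map on $H_1(-;F_2)$ is trivial as well. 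Therefore the composite $H_1(\partial N)\to H_1(\ss)$ vanishes.

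Finally I would combine the two steps: $j_\ast$ vanishes on the image of $H_1(\partial N)\to H_1(N)$, but by the first step that image is all of $H_1(N)$, so $j_\ast:H_1(N)\to H_1(\ss)$ is trivial. The only point that requires genuine care is the surjectivity of $H_1(\partial N)\to H_1(N)$ — this is where the handlebody-type hypothesis $H_2(N)=0$ is essential — whereas the remainder is just naturality of the abelianization map together with the hypothesis on $\pi_1$.
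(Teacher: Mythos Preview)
Your proof is correct and follows essentially the same approach as the paper: both arguments factor the inclusion through $\partial N\hookrightarrow N\hookrightarrow \ss$, use $\pi_1$-triviality to kill the composite on $H_1$, and use the long exact sequence of $(N,\partial N)$ together with Poincar\'e--Lefschetz duality and $H_2(N)=0$ to show $H_1(\partial N)\to H_1(N)$ is surjective. Your version simply makes explicit the passage from $\pi_1$-triviality to triviality on $H_1(-;F_2)$, which the paper leaves implicit.
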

\begin{proof}
Consider the following diagram
\begin{equation*}
\label{eq:diagram}
  \xymatrix@R+2em@C+2em{
  \partial N \ar[rd]^-k \ar[d]_i &  \\
  N \ar[r]^-j  & \ss
  }
 \end{equation*}
where $i,j$ and $k$ are inclusions. These functions induce the following diagram.
\begin{equation*}
\label{eq:diagram}
  \xymatrix@R+2em@C+2em{
  H_1(\partial N) \ar[rd]^-{k_\ast} \ar[d]_{i_\ast} &  \\
  H_1(N) \ar[r]^-{j_\ast}  & H_1(\ss)
  }
 \end{equation*}
Since, $\partial N$ is $\pi_1$-trivial, then $k_\ast$ is trivial. Thus, in order to prove that $j_\ast$ is trivial, it is only necessary to prove that $i_\ast$ is surjective. Now, consider the exact homology sequence of the pair $(N,\partial N)$,
\begin{equation*}
0\rightarrow H_2(N,\partial N)\rightarrow H_1(\partial N)\stackrel{i_\ast}{\rightarrow}H_1(N)\rightarrow H_1(N,\partial N)\rightarrow \ldots
\end{equation*}
By Proposition~\ref{pr2} and the Poincar\'e duality, one has  
$$
H_1(N,\partial N)\cong (H_2(N))^\ast.
$$
On the other hand, one has that $H_2(N)=0$. Hence the lemma follows.

\end{proof}

\begin{lemma}\label{le4}
Let $\Te$ be a separating connected component of a regular level set associated to some Smale flow $\phi_t$ on $\ss$. Let $M_{1}$ and $M_{2}$ be the closure of the two connected components of $\ss-\Te$. Hence,
\begin{enumerate}
\item if  $\Te$ is $\pi_1$-trivial in $\ss$, then  
either $M_{1}$ or $M_{2}$ is of handlebody type;
\item if $\Te$ is $\pi_1$-non-trivial in $\ss$, then $M_1$ and $M_2$ are of handlebody type. 
\end{enumerate}
\end{lemma}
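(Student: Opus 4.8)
The plan is to reduce both parts to the homological case analysis already carried out in Lemma~\ref{le15} together with Remark~\ref{ob1}, the only new input being how the $\pi_1$-hypothesis on $\Te$ constrains the second homology of $M_1$ and $M_2$. For part (1), since $\Te$ is separating, Remark~\ref{ob1} already tells us that at least one of $M_1,M_2$ is of handlebody type, so in fact the $\pi_1$-triviality is not even needed here; I would simply cite inequality~(\ref{eq4}) and the case analysis of Lemma~\ref{le15} (cases (a), (b), (c)): in each case at least one of the two pieces has $H_2 = 0$ and $\di H_1 = g$, hence is of handlebody type by definition. So part (1) is essentially Remark~\ref{ob1} restated.

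The substance is in part (2). First I would observe, via Lemma~\ref{le1}, that $M_i$ fails to be of handlebody type precisely when $H_2(M_i)\cong F_2$; so I must show that if $\Te$ is $\pi_1$-non-trivial then neither $M_1$ nor $M_2$ can have $H_2\cong F_2$, i.e.\ by~(\ref{eq4}) the only possibility is case (a), $H_2(M_1)=H_2(M_2)=0$. The key step is the following: if, say, $H_2(M_1)\cong F_2$, then by Lemma~\ref{le16} the generator of $H_2(M_1)$ would map to the generator of $H_2(\ss)$, which forces $\Te = \partial M_1$ to be a non-separating-type surface carrying the fundamental $2$-class — more precisely, a closed surface in $M_1$ generating $H_2(\ss)$ must be isotopic/homologous into $\partial M_1 = \Te$, and a surface representing the nontrivial class in $H_2(\ss)=H_2(\ss^2\times\ss^1)$ necessarily intersects a loop dual to the $\ss^1$-factor, which exhibits $\Te$ as $\pi_1$-non-trivial — wait, that's the wrong direction. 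Let me instead argue by contraposition from the intended conclusion: I will show directly that $\Te$ is $\pi_1$-trivial whenever one of the $M_i$ is of handlebody type with $H_2=0$, which is what Lemma~\ref{le13} gives. Concretely: if $M_i$ is of handlebody type and (toward a contradiction about the other piece) $\Te$ were such that $H_2(M_j)\cong F_2$ for $j\ne i$, one uses Mayer–Vietoris plus the fact that $H_2(M_i)=0$ to pin down the image of $H_1(\Te)$ in $H_1(\ss)$, and combines this with Lemma~\ref{le13} applied to $M_i$.

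So the cleaner route, and the one I would actually write, is: assume $\Te$ is $\pi_1$-non-trivial and suppose for contradiction that (say) $M_1$ is of handlebody type. Then $H_2(M_1)=0$ and $\partial M_1 = \Te$. If additionally $\Te$ were $\pi_1$-trivial we'd be done contradicting the hypothesis, so the real claim is that a handlebody-type piece forces the boundary to be $\pi_1$-trivial — but that is false in general (a handlebody in $\ss$ can have $\pi_1$-nontrivial boundary), so the correct statement must be that $\pi_1$-non-triviality of $\Te$ forces $H_2(M_1)=H_2(M_2)=0$ simultaneously, i.e.\ rules out cases (b) and (c). I would prove this as follows: in case (b), $H_2(M_1)\cong F_2$; by Lemma~\ref{le16} (applied to a closed non-separating surface — here one uses that a class in $H_2(M_1)$ mapping nontrivially to $H_2(\ss)$ is represented by a closed surface pushed off $\Te$), the inclusion $\Te=\partial M_1 \hookrightarrow \ss$ has the property that $[\Te]$ bounds in $\ss$ yet the generator of $H_2(M_1)$ does not; a Mayer–Vietoris computation then shows $i_\ast\colon H_1(\Te)\to H_1(M_1)\oplus H_1(M_2)$ together with the long exact sequence forces the image of $H_1(\Te)\to H_1(\ss)$ to be zero, i.e.\ $\Te$ is homologically trivial; finally, since $\pi_1(\ss)\cong\Z$ is abelian, homological triviality of $i_\ast$ on $\Te$ is equivalent to $\pi_1$-triviality, contradicting the hypothesis. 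Hence only case (a) survives, and by Lemma~\ref{le15}(2) both $M_1$ and $M_2$ have $\di H_1 = g$ with $H_2 = 0$, so both are of handlebody type.

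The main obstacle is the geometric/homological bookkeeping in the last paragraph: making precise the claim that $H_2(M_i)\cong F_2$ implies $i_\ast\colon H_1(\Te)\to H_1(\ss)$ is trivial, which requires carefully threading the Mayer–Vietoris sequence~(\ref{seq9}) against the long exact sequences of the pairs $(M_i,\partial M_i)$ and using $\pi_1(\ss)\cong\Z$ to pass between the homological and homotopical notions of triviality. Everything else is a direct appeal to Remark~\ref{ob1}, Lemma~\ref{le1}, Lemma~\ref{le15} and Lemma~\ref{le13}.
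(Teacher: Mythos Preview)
Your reading of part~(1) is too weak. The paper's ``either \ldots or'' is exclusive: the proof does not stop at Remark~\ref{ob1} but goes on to show that if $M_1$ is of handlebody type and $\Te$ is $\pi_1$-trivial, then $H_2(M_2)\cong F_2$, so by Lemma~\ref{le1} $M_2$ is \emph{not} of handlebody type. This uses Lemma~\ref{le13} (which needs the $\pi_1$-triviality hypothesis) together with the exact sequence of the pair $(\ss,M_1)$ and the Lefschetz duality $H_1(\ss,M_1)\cong H_2(M_2)$. So the $\pi_1$-triviality assumption is not superfluous; it is exactly what forces the dichotomy ``exactly one'' in part~(1) versus ``both'' in part~(2), and this dichotomy is what later arguments (e.g.\ the opening of the proof of Proposition~\ref{pr5}) rely on. Your plan for part~(1) proves only the trivial half.

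For part~(2) your contrapositive route is in principle workable, but the step you label ``a Mayer--Vietoris computation then shows \ldots the image of $H_1(\Te)\to H_1(\ss)$ is zero'' is the whole content, and you have not supplied it; moreover your appeal to Lemma~\ref{le16} is misplaced since $\Te$ is separating. The paper argues more directly: in the Mayer--Vietoris sequence
\[
0\to H_2(M_1)\oplus H_2(M_2)\to H_2(\ss)\xrightarrow{\ \Delta\ } H_1(\Te)
\]
one shows $\Delta\neq 0$ by pairing the generator of $H_2(\ss)$ against a curve $\beta\subset\Te$ that is nontrivial in $\pi_1(\ss)$ (an intersection-number argument), and then exactness forces $H_2(M_1)=H_2(M_2)=0$. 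What you would need to justify your version is precisely the duality between $\Delta$ and the map $H_1(\Te)\to H_1(\ss)$, which amounts to the same intersection argument. A cleaner way to execute your contrapositive, incidentally, is to reuse the machinery of part~(1): if $\Te$ is $\pi_1$-non-trivial then the curve $\beta\subset\Te\subset M_i$ makes $\eta\colon H_1(M_i)\to H_1(\ss)$ surjective, whence $H_2(M_{3-i})\cong H_1(\ss,M_i)=0$ by the same Lefschetz duality; doing this for $i=1,2$ gives both pieces handlebody type.
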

\begin{proof}
\begin{enumerate}
\item It follows by Remark~\ref{ob1}, that at least one of the submanifolds $M_{1}$ or $M_{2}$ is of handlebody type. 
Suppose that, $M_1$ is a manifold of handlebody type. By Lefschetz Duality, one has that
\begin{equation}\label{eq31}
H_1(\ss,M_1)\cong H_2(M_2).
\end{equation}
Now, consider the homology exact sequence of the pair $(\ss, M_1)$, 
\begin{equation*}
\rightarrow H_2(\ss,M_1)\rightarrow H_1(M_1)\stackrel{\eta}\rightarrow H_1(\ss)\rightarrow H_1(\ss,M_1)\rightarrow 0.
\end{equation*}
Since, $\Te$ is $\pi_1$-trivial in $\ss$, by Lemma~\ref{le13}, one has that $\eta$ is equal to zero. Therefore, $H_1(\ss)\cong H_1(\ss,M_1)$ and by $(\ref{eq31})$ one has that
\begin{center}
$H_2(M_2)\cong F_2.$
\end{center}
The result now follows by Lemma~\ref{le1}.
\item If $\Te$ is $\pi_1$-non-trivial in $\ss$ there exists a simple closed curve $\beta$ in $\Te$ such that $[\beta]\in \pi_{1}(\ss)$ is non-trivial. Now consider the Mayer-Vietoris exact homology sequence.
\begin{equation}\label{seq11}
0\rightarrow H_2(M_1)\oplus H_2(M_2)\rightarrow H_2(\ss)\stackrel{\Delta}{\rightarrow}H_1(X\cap Y)\rightarrow 
\end{equation}

One has that $\Delta$ is non zero, otherwise we would have that the image of $\Delta$ in $H_1(\Te)$ is homologous to a curve $\gamma$ that separates $\Te$ and the intersection number of $\beta$ and $\gamma$ would be zero, since $[\gamma]=0$ in $\Te$. However, since this intersection is $n\ne 0$, we have a contradiction and hence, $\Delta$ is not zero. Thus, by (\ref{seq11}) one has that $H_2(M_1)$ and $H_2(M_2)$ are zero. Thus $M_1$ and $M_2$ are of handlebody type. 
\end{enumerate}
\end{proof}
%%%%%%%%%%%%%%%%%%%%%%%%%%%%%%%%%%%%%%%%%%%%%%%%%%%%%%%%%%%%%%%%%%%%%%%%%%%%%%%%%%%%%%%%%%%%%%%%%%%%%%%%%%%%%%%%%%%%%%%%%%%%%%%%%%%%%%%%%%%%%%%%%%%%%%%%%%%%%%%%%%%%%%%%%%%%%

\section{Manifolds of Handlebody Type and Lyapunov Graphs}\label{sec4}

In this section, we study the relation between Lyapunov graphs and manifolds of handlebody type.
Let $\phi_{t}$ be a Smale flow on $\ss$ with Lyapunov function $f$ and let $L$ be the associated Lyapunov graph. Suppose $L$ is a tree. Choose a regular level set which is a separating surface $\Te$ which splits $\ss$ in two $3$-submanifolds, $\me^{-}_{\Te}$ and $\me^{+}_{\Te}$, and such that the flow is transversal and outward going on $\me^{-}_{\Te}$ and inward going on $\me^{+}_{\Te}$.
\begin{remark}\label{ob2}
By Lemma $\ref{le4}$ one has that:
\begin{itemize}
\item if $\Te$ is  $\pi_{1}$-trivial, then either $\me^{+}_{\Te}$ or $\me^{-}_{\Te}$ is of handlebody type;
\item if $\Te$ is $\pi_{1}$-non-trivial, then $\me^{+}_{\Te}$ and $\me^{-}_{\Te}$ are of handlebody type;
\item if we cut $\ss$ along a separating regular level set, this corresponds to disconnecting the graph $L$  into two subgraphs with dangling edges that represent each one of the manifolds $\me^{+}_{\Te}$ and $\me^{-}_{\Te}$.  
\end{itemize} 
\end{remark} 
\begin{proposition}\label{co2}Let $u$ be a vertex in $L$ and let $e$ be an incoming edge incident to $u$. Suppose that there exists a vertex $v$ labelled with a basic set $\Lambda$ such that $\Lambda$ is contained in $\me^-_{\tcc_g}$. Then there exists an incoming (outgoing) edge with weight $\widehat{g}$ of $v$ such that $\me^+_{\tcc_{\widehat{g}}}\,(\me^-_{\tcc_{\widehat{g}}})$ is contained in $\me^-_{\tcc_g}$.
\end{proposition}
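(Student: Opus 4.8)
The plan is to argue by following the flow downward. The hypothesis puts a basic set $\Lambda$ (labelling a vertex $v$) inside the sublevel region $\me^-_{\tcc_g}$, where $\tcc_g$ is the regular level surface corresponding to the incoming edge $e$ at $u$; by construction the flow is outward-going (downward) on $\tcc_g$, so $\me^-_{\tcc_g}$ is an attracting-type block: once an orbit enters it, it never leaves. I would first record this ``no-escape'' property precisely: for every $x\in\me^-_{\tcc_g}$ and every $t>0$, $\phi_t(x)\in\me^-_{\tcc_g}$, which is immediate from the filtration property $\phi_t(M_i)\subset\inte M_i$ together with the fact that $\tcc_g$ is a regular level set of the Lyapunov function along which the flow points inward into $\me^-_{\tcc_g}$. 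Symmetrically, $\me^+_{\tcc_g}$ is repelling in backward time.

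Next I would locate an edge of $v$ whose associated sublevel (or suplevel) block sits inside $\me^-_{\tcc_g}$. Since $\Lambda\subset\me^-_{\tcc_g}$ and $\Lambda$ is a basic set of the Smale flow, the unstable manifold $W^u(\Lambda)$ and the stable manifold $W^s(\Lambda)$ are defined. Using the no-escape property of $\me^-_{\tcc_g}$ in forward time, the whole forward saturation of a small neighborhood of $\Lambda$ — in particular $W^u(\Lambda)$ — remains in $\me^-_{\tcc_g}$, because points of $W^u(\Lambda)$ have backward orbits converging to $\Lambda$ and $\me^-_{\tcc_g}$ is forward invariant and (being a closed sublevel set) contains the closure of that forward saturation. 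Now pick a regular value $c'$ just below the critical values of $f$ near $v$; the corresponding level surface near $v$ decomposes, via the Lyapunov graph structure, into pieces each carried by an edge incident to $v$. Choose an outgoing edge of $v$ with weight $\widehat g$; the block $\me^-_{\tcc_{\widehat g}}$ cut off below that edge is exactly the closure of the component of the flow region downstream of $v$ along that edge, and since $v$'s basic set and everything it flows into lies in $\me^-_{\tcc_g}$, we get $\me^-_{\tcc_{\widehat g}}\subset\me^-_{\tcc_g}$. The incoming-edge case is the time-reversed version: pick an incoming edge of $v$ with weight $\widehat g$; then $\me^+_{\tcc_{\widehat g}}$, the block upstream of $v$ along that edge, is trapped inside $\me^-_{\tcc_g}$ because... wait — upstream blocks need not be below $\tcc_g$. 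The correct statement here is that the level surface $\tcc_{\widehat g}$ carried by an edge incident to $v$ itself lies in $\me^-_{\tcc_g}$ (the edge is downstream of $u$ on the graph, since $v$ is), and then the part of $\ss\setminus\tcc_{\widehat g}$ that lies on the side containing $v$ — namely $\me^+_{\tcc_{\widehat g}}$ for an incoming edge, $\me^-_{\tcc_{\widehat g}}$ for an outgoing edge — is the one contained in $\me^-_{\tcc_g}$, because $\tcc_{\widehat g}\cup v$-side is a connected subregion of $\me^-_{\tcc_g}$ with boundary $\tcc_{\widehat g}$ inside $\me^-_{\tcc_g}$ and not meeting $\tcc_g$.

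Thus the cleanest route is graph-theoretic once the dynamical ``trapping'' is set up: on the Lyapunov graph $L$, the vertex $u$ has incoming edge $e$, and everything below $e$ (in the partial order induced by the acyclic orientation of $L$) corresponds to the region $\me^-_{\tcc_g}$; the vertex $v$ lies in that part of $L$ because its basic set does; hence every edge incident to $v$ lies below $e$ as well, and cutting along any such edge $\tcc_{\widehat g}$ isolates a subregion on the $v$-side that is a subregion of $\me^-_{\tcc_g}$. I would make this precise by invoking Remark~\ref{ob2}: cutting $\ss$ along a separating regular level surface corresponds to disconnecting $L$ into two subgraphs with dangling edges, each realizing one of $\me^\pm$; applying this to $\tcc_g$ and then to $\tcc_{\widehat g}$ and comparing the two subgraph decompositions gives the inclusion directly. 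The main obstacle is the bookkeeping in the non-separating case — if $\tcc_g$ happens to be non-separating one must, as in the discussion preceding Lemma~\ref{le14}, pass to a pair of level surfaces $\tcc_g\sqcup\tcc_{\widehat g}$ bounding the blocks; but since $\me^\pm_{\tcc_g}$ are defined here via the tree hypothesis on $L$ (Remark~\ref{ob2} assumes $L$ a tree, hence all relevant level sets separating), this case does not actually arise and the separating argument suffices.
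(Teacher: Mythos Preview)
Your proposal eventually lands on the graph-theoretic strategy the paper uses, but the dynamical preamble has the orientation of $\me^{-}_{\tcc_g}$ backwards, and the final combinatorial step contains a genuine gap.

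On orientation: by the paragraph preceding the proposition, the flow is \emph{outward} on the boundary of $\me^{-}_{\tcc_g}$, so $\me^{-}_{\tcc_g}$ is a repelling (backward-invariant) block, not an attracting one. Your ``no-escape in forward time'' claim therefore applies to $\me^{+}_{\tcc_g}$, not to $\me^{-}_{\tcc_g}$, and the $W^u(\Lambda)$ discussion built on it does not establish what you need.

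The more serious problem is in the graph-theoretic paragraph. You assert that for \emph{any} edge incident to $v$ the $v$-side of the cut lies inside $\me^{-}_{\tcc_g}$. This is false. If $e_1$ is an edge of $v$ that does \emph{not} lie on the unique path in the tree $L$ from $u$ to $v$, then cutting $e_1$ leaves $u$ and $v$ in the same component; the $v$-side then contains $u$ and hence meets $\me^{+}_{\tcc_g}$, so the inclusion fails. For such an $e_1$ it is the \emph{other} side of the cut that sits inside $\me^{-}_{\tcc_g}$, and that side is $\me^{-}_{\tcc_{\widehat g}}$ for an incoming edge (resp.\ $\me^{+}_{\tcc_{\widehat g}}$ for an outgoing one) --- exactly the opposite of what the proposition asserts.

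The fix, which is precisely the paper's argument, is to use the tree hypothesis to select the \emph{specific} edge $e_1$ of $v$ lying on the unique $u$--$v$ path. Cutting that edge separates $u$ from $v$; the component containing $v$ is then a subtree of the subtree corresponding to $\me^{-}_{\tcc_g}$, and the containment $\me^{+}_{\tcc_{\widehat g}}\subset\me^{-}_{\tcc_g}$ (incoming case) or $\me^{-}_{\tcc_{\widehat g}}\subset\me^{-}_{\tcc_g}$ (outgoing case) follows immediately. The entire proof is three lines of tree combinatorics; no invariant-manifold or trapping argument is required.
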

\begin{proof}
Since $L$ is a tree, there exists an incoming or outcoming edge $e_1$ (with weight $\widehat{g}$) of $v$ such that cutting along this edge separates $u$ and $v$ in distinct subgraphs. First suppose that $e_1$ is an incoming edge incident to $v$. Hence, if we cut $\ss$ along $\Tu$ one has $\me^+_{\tcc_{\widehat{g}}}\subset \me^-_{\tcc_{g}}$. On the other hand, if $e_1$ is an outgoing edge incident to $v$, $\me^-_{\tcc_{\widehat{g}}}\subset \me^-_{\tcc_g}$. 
\end{proof}

We have a similar result if the edge $e$ incident to $u$ is outgoing. In what follows other interesting properties of $L$ are determined.
Given a vertex $v$ of the graph $L$ labelled with a basic set $\Lambda_{v}$, each edge point represents a regular level set which is surface of genus $g^{+}_{i}\,(g^{-}_{j})$ embedded in $\ss$ denoted by $\tcc_{g^{+}_{i}}\,(\tcc_{g^{-}_{j}})$. The incoming (outgoing) edges are labelled with weights corresponding to $g^{+}_{i}$'s $(g^{-}_{i})$'s.

\begin{corollary}\label{le5} Let $v$ be a vertex on $L$ and with the above notation.
\begin{enumerate}
\item For $\alpha\in \left\{1,\dots,e^{-}_{v}\right\}$ fixed, one has
\begin{center}
$\me^-_{\tcc_{g^+_k}}\subset \me^-_{\tcc_{g^-_\alpha}}\quad\text{and}\quad\me^+_{\tcc_{g^-_j}}\subset \me^-_{\tcc_{g^-_\alpha}}$
\end{center}
with $k\in\left\{1,\dots,e^{+}\right\}$ and $j\in\left\{1,\dots,e^{-}\right\}-\left\{\alpha\right\}.$
\item For $\beta\in \left\{1,\dots,e^{+}\right\}$ fixed, one has
\begin{center}
$\me^-_{\tcc_{g^+_k}}\subset \me^+_{\tcc_{g^+_\beta}}\quad\text{and}\quad\me^+_{\tcc_{g^-_j}}\subset \me^+_{\tcc_{g^+_\beta}}$
\end{center}
with $k\in\left\{1,\dots,e^{+}\right\}-\left\{\beta\right\}$ and $j\in\left\{1,\dots,e^{-}\right\}.$
\end{enumerate}
\end{corollary}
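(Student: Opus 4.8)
The plan is to reduce both statements to repeated applications of Proposition \ref{co2} together with the tree structure of $L$. The key observation is that each edge incident to $v$ carrying weight $g^{+}_{i}$ or $g^{-}_{j}$ corresponds to a separating regular level surface $\tcc_{g^{+}_{i}}$ or $\tcc_{g^{-}_{j}}$ in $\ss$, and since $L$ is a tree, cutting $\ss$ along any one of these level surfaces disconnects the graph into two subgraphs, one of which contains $v$. This gives, for the fixed outgoing edge $\alpha$, a decomposition $\ss=\me^{-}_{\tcc_{g^{-}_{\alpha}}}\cup\me^{+}_{\tcc_{g^{-}_{\alpha}}}$, where $\me^{-}_{\tcc_{g^{-}_{\alpha}}}$ is the piece on which the flow is outward-going and which contains the vertex $v$ (since the edge $\alpha$ is outgoing from $v$, all of $v$ together with all its other incident edges lies on the "$-$" side of $\tcc_{g^{-}_{\alpha}}$).

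First I would set up the notation carefully: fix $\alpha\in\{1,\dots,e^{-}\}$ and observe that cutting along $\tcc_{g^{-}_{\alpha}}$ puts $v$ in $\me^{-}_{\tcc_{g^{-}_{\alpha}}}$. Now take any other edge incident to $v$, say an incoming edge with weight $g^{+}_{k}$, $k\in\{1,\dots,e^{+}\}$. Cutting $\ss$ along $\tcc_{g^{+}_{k}}$ splits $L$ so that $v$ lies in one subgraph; because $g^{+}_{k}$ is an incoming edge of $v$, the side of $\tcc_{g^{+}_{k}}$ not containing $v$ is $\me^{-}_{\tcc_{g^{+}_{k}}}$. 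Since the edge $\alpha$ is on the $v$-side of $\tcc_{g^{+}_{k}}$, the whole subgraph not containing $v$ — hence the manifold $\me^{-}_{\tcc_{g^{+}_{k}}}$ — lies entirely on the "$-$" side of $\tcc_{g^{-}_{\alpha}}$, i.e. $\me^{-}_{\tcc_{g^{+}_{k}}}\subset\me^{-}_{\tcc_{g^{-}_{\alpha}}}$. The same argument with an outgoing edge $g^{-}_{j}$, $j\neq\alpha$, gives $\me^{+}_{\tcc_{g^{-}_{j}}}\subset\me^{-}_{\tcc_{g^{-}_{\alpha}}}$, the "$+$" appearing because $g^{-}_{j}$ is outgoing. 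This proves part (1); part (2) is identical with the roles of incoming and outgoing reversed, fixing instead an incoming edge $\beta$ and noting $v\in\me^{+}_{\tcc_{g^{+}_{\beta}}}$.

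The only real content, beyond Proposition \ref{co2}, is the bookkeeping of which connected component inherits the "$+$" or "$-$" label — that is, checking that the orientation of the flow across each level surface is consistent with the nesting claimed, and that all edges of $v$ other than the distinguished one $\alpha$ (resp.\ $\beta$) genuinely land on the $v$-side of the cut. This follows because $L$ is a tree: removing a single edge $e_{1}$ incident to $v$ leaves $v$ and every other edge incident to $v$ in the same component, so no further case analysis is needed. I expect this labeling/orientation bookkeeping to be the main (though minor) obstacle; the topological substance is entirely carried by Proposition \ref{co2} and Remark \ref{ob2}. One should also remark that the hypotheses of Proposition \ref{co2} are met here by taking $u$ to be the endpoint of the relevant edge and $\Lambda=\Lambda_{v}$, so that the nested-manifold conclusions transfer verbatim.
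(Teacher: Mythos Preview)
Your proposal is correct and follows essentially the same approach as the paper: both arguments use the tree structure of $L$ to observe that cutting along the distinguished edge at $v$ places $v$ and all its other incident edges on one side, then invoke Proposition~\ref{co2} (the paper's reference to Lemma~\ref{le4} here appears to be a typo) to obtain the nesting of the $\me^{\pm}$ pieces; part (2) is handled in both by reversing the orientation. Your write-up is in fact more careful about the $+/-$ labeling than the paper's terse argument.
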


\begin{proof}   
\begin{enumerate}
\item Since $\tcc_{g^-_\alpha}$ corresponds to an outgoing edge incident to $v$, let $u$ be the vertex on which the edge is incoming. Note that each $\tcc_{g^+_k}$ corresponds
to an outgoing edge incident to vertices $v_k$ that are incoming and incident to $v$. Hence, by the Lemma~\ref{le4} it follows that
\begin{center}
$\me^-_{\tcc_{g^+_k}}\subset \me^-_{\tcc_{g^-_\alpha}}.$
\end{center} 
Analogously, each $\tcc_{g^{-}_{j}}$ corresponds to an outgoing edge incident to vertices  $v_{j}$ and by Lemma~\ref{le4} it follows that
\begin{center}
$\me^+_{\tcc_{g^-_j}}\subset \me^-_{\tcc_{g^-_\alpha}}.$
\end{center}
\item To prove $2$, it suffices to reverse the orientation on the graph. 
\end{enumerate}
\end{proof}
\begin{corollary}\label{co1}
\begin{enumerate} Let $v$ be a vertex of $L$ and with the notation above, one has
\item if $\me^-_{\tcc_{g^-_\alpha}}$ is of handlebody type for $\alpha\in \left\{1,\dots,e^{-}\right\}$, then
\begin{center}
$\me^-_{\tcc_{g^+_k}}\quad\text{and}\quad\me^+_{\tcc_{g^-_j}}$
\end{center}
are of handlebody type for $k\in\left\{1,\dots,e^{+}\right\}$ and $j\in\left\{1,\dots,e^{-}\right\}-\left\{\alpha\right\}$;
\item if $\me^+_{\tcc_{g^+_\beta}}$ is of handlebody type for $\beta\in \left\{1,\dots,e^{+}\right\}$, then
\begin{center}
$\me^-_{\tcc_{g^+_k}}\quad\text{and}\quad\me^+_{\tcc_{g^-_j}}$
\end{center}
are of handlebody type for $k\in\left\{1,\dots,e^{+}\right\}-\left\{\beta\right\}$ and $j\in\left\{1,\dots,e^{-}\right\}$. 
\end{enumerate}
\end{corollary}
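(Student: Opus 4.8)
\textbf{Proof proposal for Corollary~\ref{co1}.}

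The plan is to reduce both statements to Corollary~\ref{le5} together with the ``handlebody inheritance'' principle: a $3$-submanifold of $\ss$ that is properly contained in a manifold of handlebody type is again of handlebody type. First I would make this principle precise. Suppose $N \subset P \subset \ss$ with $P$ of handlebody type, where $N$ arises by cutting $\ss$ along regular level sets, so that $\partial N$ is a disjoint union of closed orientable surfaces and $P - \inte N$ is again such a cut piece. The key point is that $H_2(N) = 0$: since $H_2(P) = 0$ and $P$ is of handlebody type, any closed surface in $N \subset P$ bounds in $P$; using the Mayer--Vietoris sequence for $P = N \cup_{\Sigma} (P - \inte N)$ (where $\Sigma = \partial N \cap (P - \inte N)$) together with $H_2(P) = 0$ and $H_3(P) = 0$ forces $H_2(N) = 0$ (the argument parallels the one in Lemma~\ref{le15}, using that the relevant connecting map into $H_1(\Sigma)$ has the appropriate rank). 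Once $H_2(N) = 0$, the computation in equation~(\ref{eq2}) — or rather its analogue for the pair $(N, \partial N)$ via Proposition~\ref{pr2} and Poincar\'e duality — gives $\di H_1(N) = g(\partial N)$, which is exactly the defining condition for $N$ to be of handlebody type.

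With this principle in hand, part $1$ is immediate from item $1$ of Corollary~\ref{le5}: if $\me^-_{\tcc_{g^-_\alpha}}$ is of handlebody type, then since
$$\me^-_{\tcc_{g^+_k}} \subset \me^-_{\tcc_{g^-_\alpha}} \quad\text{and}\quad \me^+_{\tcc_{g^-_j}} \subset \me^-_{\tcc_{g^-_\alpha}}$$
for $k \in \{1,\dots,e^+\}$ and $j \in \{1,\dots,e^-\} - \{\alpha\}$, each of these submanifolds sits inside a handlebody-type manifold and hence is itself of handlebody type. Part $2$ follows in exactly the same way from item $2$ of Corollary~\ref{le5}, the only change being that the ambient handlebody is now $\me^+_{\tcc_{g^+_\beta}}$ and the exclusion is on the index $\beta$ among the incoming edges; as in the proof of Corollary~\ref{le5}, one may alternatively obtain part $2$ from part $1$ by reversing the orientation of $L$.

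The main obstacle I anticipate is the handlebody inheritance principle itself, i.e.\ verifying cleanly that $H_2(N) = 0$ and that $\di H_1(N) = g(\partial N)$ for a cut piece $N$ nested inside a handlebody-type manifold. One has to be careful that $N$ and $P - \inte N$ meet along a genuine closed subsurface of $\partial N$ (which holds because these pieces come from cutting along regular level sets of the Lyapunov function, so all the relevant surfaces are two-sided, closed and disjoint), and that the Mayer--Vietoris and long exact sequence bookkeeping — tracking $\widetilde H_0$ of possibly disconnected boundaries and applying Lefschetz/Poincar\'e duality with $F_2$ coefficients — is done consistently. Everything else is a direct citation of Corollary~\ref{le5}, so once the inheritance lemma is established the corollary drops out with essentially no further work.
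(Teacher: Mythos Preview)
Your proposal is correct and takes essentially the same approach as the paper: the paper's proof is the single sentence ``This follows directly from Remark~\ref{ob2}, of Corollary~\ref{le5} and from the fact that a submanifold with a surface boundary of a manifold of handlebody type is of handlebody type,'' which is exactly your strategy of combining the containments of Corollary~\ref{le5} with a handlebody inheritance principle. Your sketch of the inheritance step (via Mayer--Vietoris and the duality computation behind equation~(\ref{eq2})) supplies more detail than the paper gives, but the overall architecture is identical.
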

\begin{proof} This follows directly from Remark $\ref{ob2}$, of Corollary $\ref{le5}$ and from the fact that a submanifold with a surface boundary of a manifold of handlebody type is of handlebody type. 
\end{proof}
%%%%%%%%%%%%%%%%%%%%%%%%%%%%%%%%%%%%%%%%%%%%%%%%%%%%%%%%%%%%%%%%%%%%%%%%%%%%%%%%%%%%%%%%%%%%%%%%%%%%%%%%%%%%%%%%%%%%
\section{Necessary Conditions on Lyapunov Graphs associated to Smale Flows}\label{sec5}

In this section we will prove a series of propositions that will comprise the proof of the necessity of the conditions stated in the main theorem of this work, Theorem~\ref{MainTheorem}.

Let $\phi_{t}$ be a Smale flow on $\ss$ with Lyapunov function $f:\ss\rightarrow \re$. Let $\Lambda$ be a basic set of $\phi_{t}$ and $f(\Lambda)=c$. Choose $\epsilon>0$ sufficiently small so that $c$ is the only critical value in $[c-\epsilon,c+\epsilon]$. Let $X_0=f^{-1}(-\infty,c+\epsilon]$ and $Z_0=f^{-1}(-\infty,c-\epsilon]$. 

\subsection{Lyapunov Graph with a Cycle}

Let $U$ be the closure of the component of $X_0-Z_0$ which contains $\Lambda$. Then $U$ is a neighborhood of $\Lambda$ whose boundary consists of closed orientable surfaces to which the flow is transverse. The flow enters  $e^+$ of these surfaces and exits the remaining $e^-$. 

\begin{proposition}\label{pr6}
Let  $L$ be a Lyapunov graph associated with a Smale flow  $\phi_{t}$  and a Lyapunov function on $\ss$, such that $\beta(L)=1$. Let $v$ be a vertex of $L$ on the cycle, labelled with a suspension of a subshift of finite type and $A_{m\times m}$ is the non-negative integer matrix representing this subshift. Let $k=\di \ke\overline{I-A}:F_2^m\rightarrow F_2^m$ where  $\overline{I-A}$ is the $\mo 2$ reduction of $I-A$. Then, if $e^+$ and $e^-$ are respectively the indegree and outdegree of $v$, one has:
\begin{center}
$e^+ ,e^- >0,$\\
$e^+\leq k+1,$\\
$e^-\leq k+1,$\\
$k+1-G^-\leq e^{+},\,\text{with}\quad G^-=\sum^{e^-}_{i=1}g^-_i\quad\text{and} $\\
$k+1-G^+\leq e^{-},\,\text{with}\quad G^+=\sum^{e^+}_{j=1}g^+_j.$
\end{center}
Where $g_i^+$'s $(g_{j}^{-})$'s are the weights on the incoming $($outgoing$)$ edges incident to the vertex $v$. 
\end{proposition}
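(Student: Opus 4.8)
The plan is to analyze the neighborhood $U$ of $\Lambda$ from the homological side, using the Bowen–Franks computation (Theorem~\ref{teor1}) together with the Mayer–Vietoris/Lefschetz arguments already developed for handlebody-type manifolds in Section~\ref{sec3}. First I would record the basic topology: $U$ is a compact $3$-manifold whose boundary is a disjoint union of $e^++e^-$ closed orientable surfaces, the flow entering across $e^+$ of them (call these $\partial^+U$, with genera $g_1^+,\dots,g_{e^+}^+$) and exiting across the other $e^-$ (call these $\partial^-U$, with genera $g_1^-,\dots,g_{e^-}^-$). Since $\Lambda$ is a suspension of a subshift of finite type, it has index $1$, so $U$ deformation retracts onto something one-dimensional and Theorem~\ref{teor1} applied to the filtration pair gives $H_1(U,\partial^-U;F_2)\cong F_2^m/(I-B)F_2^m$ and $H_2(U,\partial^-U;F_2)\cong\ke\big((I-B):F_2^m\to F_2^m\big)$, the latter having dimension $k$; all other relative groups vanish. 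By reversing the flow one gets the symmetric statement with $\partial^+U$, which is why the two pairs of inequalities in the statement are mirror images of one another; so it suffices to prove the bounds on $e^+$.

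Next I would feed this into the long exact sequence of the pair $(U,\partial^-U)$ (over $F_2$), together with the dimensions $\dim H_0(\partial^-U)=e^-$ and $\dim H_1(\partial^-U)=2G^-$ coming from $\partial^-U$ being $e^-$ surfaces of total genus $G^-$, and $\dim H_0(U)=1$ since $U$ is connected (it is the closure of a single component of $X_0-Z_0$). Chasing
$$
0\to H_2(U,\partial^-U)\to H_1(\partial^-U)\to H_1(U)\to H_1(U,\partial^-U)\to \widetilde H_0(\partial^-U)\to \widetilde H_0(U)\to 0
$$
and using $\dim H_2(U,\partial^-U)=k$, one extracts an expression for $\dim H_1(U)$ in terms of $k$, $G^-$, $e^-$, and $\dim H_1(U,\partial^-U)$. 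The crucial extra ingredient is the bound on $\dim H_1(U)$ coming from the ambient manifold: because $\beta(L)=1$, cutting $\ss$ along the relevant level surfaces exhibits $U$ as sitting inside $\ss$ in a way controlled by Proposition~\ref{co} ($\beta(L)\le g(\ss)=1$), and the handlebody-type analysis of Lemmas~\ref{le14}, \ref{le15} and~\ref{le4} pins down $\dim H_2$ of the complementary pieces and hence the rank of $H_1(U)$ up to the one unit of slack allowed by the single cycle. Combining the homological identity with this geometric constraint yields $e^+\le k+1$; feeding in $\dim H_1(U,\partial^-U)\le \dim H_1(\partial^-U\text{-side})$ estimates (equivalently, using that $H_1(U,\partial^-U)$ injects appropriately and is bounded by $G^-$-type data) yields the lower bound $k+1-G^-\le e^+$. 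Positivity $e^+>0$ follows because if no edge entered $v$ then $U$ would be a closed-off attractor-type block incompatible with $\Lambda$ being a nontrivial suspension of index $1$ (it must have both a stable and an unstable separatrix reaching the boundary), and symmetrically $e^->0$.

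I expect the main obstacle to be exactly the passage from the purely homological identity for $\dim H_1(U)$ to the numerical inequalities, i.e. justifying why the "extra" contribution to $H_1(U)$ is at most $1$ in the $\beta(L)=1$ case and why $\dim H_1(U,\partial^-U)$ is controlled by $G^-$. This is where the hypothesis that $v$ lies on the cycle is used: the cycle accounts for the single allowed unit of $H_2$/$H_1$ slack, and one must argue that all the level surfaces bounding $U$ that are \emph{not} on the cycle are $\pi_1$-trivial (so the handlebody-type conclusions of Lemma~\ref{le4}(1) apply), while at most the surfaces realizing the cycle can be $\pi_1$-non-trivial. Organizing this case distinction cleanly — tracking which of the $e^++e^-$ boundary components are separating vs. non-separating in $\ss$, and which carry the homology of the $\ss$ factor — is the technical heart; the rest is bookkeeping with exact sequences that parallels the $S^3$ argument of Theorem~\ref{teorem1} in~\cite{de1987smale}.
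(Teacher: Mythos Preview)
Your outline has the right ingredients (Bowen--Franks, exact sequences, the handlebody-type lemmas) but the assembly has a genuine gap at exactly the point you flag. The quantity $\dim H_1(U)$ is \emph{not} controlled by the ambient manifold: it is governed by the subshift itself and can be arbitrarily large (think of a basic block built from many round handles). So the sentence ``the handlebody-type analysis \dots\ pins down $\dim H_2$ of the complementary pieces and hence the rank of $H_1(U)$'' is where the argument breaks. A related symptom is that your displayed exact sequence is not correct as written: since $H_3(U,\partial^-U)=0$ one has $H_2(\partial^-U)\hookrightarrow H_2(U)$, so $\dim H_2(U)\ge e^-$ and the sequence does not begin with $0\to H_2(U,\partial^-U)\to H_1(\partial^-U)$. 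More importantly, nothing in the $(U,\partial^-U)$ sequence alone sees $e^+$; you have not said where $e^+$ enters.

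The paper does not try to bound $\dim H_1(U)$. Instead it enlarges $U$ by filling in the exit side: set $Z$ equal to the union of the complementary pieces across $\partial^-U$ (each of handlebody type by Lemma~\ref{le4} or Lemma~\ref{le14}), put $X=Z\cup U$ and $Y=\overline{\ss- X}$. By excision $\dim H_2(X,Z)=k$, and since $H_2(Z)$ is known (zero, or $F_2$, according to whether the cycle passes through $Z$) the pair sequence gives $\dim H_2(X)\le k$ (or $\le k+1$). The second computation of $\dim H_2(X)$ comes from Mayer--Vietoris on $\ss=X\cup Y$: here $X\cap Y=\partial^+U$ has $H_2\cong F_2^{e^+}$, and because one of these surfaces is non-separating, Lemma~\ref{le16} makes the map to $H_2(\ss)$ surjective, forcing $\dim H_2(X)=e^+-1$ (resp.\ $e^+$). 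Equating gives $e^+\le k+1$, and tracking $\dim\operatorname{im}\big(H_1(Z)\to H_1(X)\big)\le G^-$ in the same pair sequence yields $k+1-G^-\le e^+$; the Poincar\'e--Hopf relation and the time-reversed flow give the $e^-$ bounds. Your case analysis of which boundary components lie on the cycle is exactly what organizes this: the paper treats separately the case where both cycle edges are incoming to $v$ and the case where one is incoming and one outgoing, because the homology of $Z$ and $Y$ differs in the two situations.
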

 
\begin{proof}
Suppose $\Lambda$ is the basic set of $\phi_{t}$ corresponding to $v$. First of all, suppose that both edges of the cycle are incoming edges incident to the vertex $v$, as shown in Figure~\ref{singularidade4}.
\begin{figure}[h!]
\centering\includegraphics[scale=0.5]{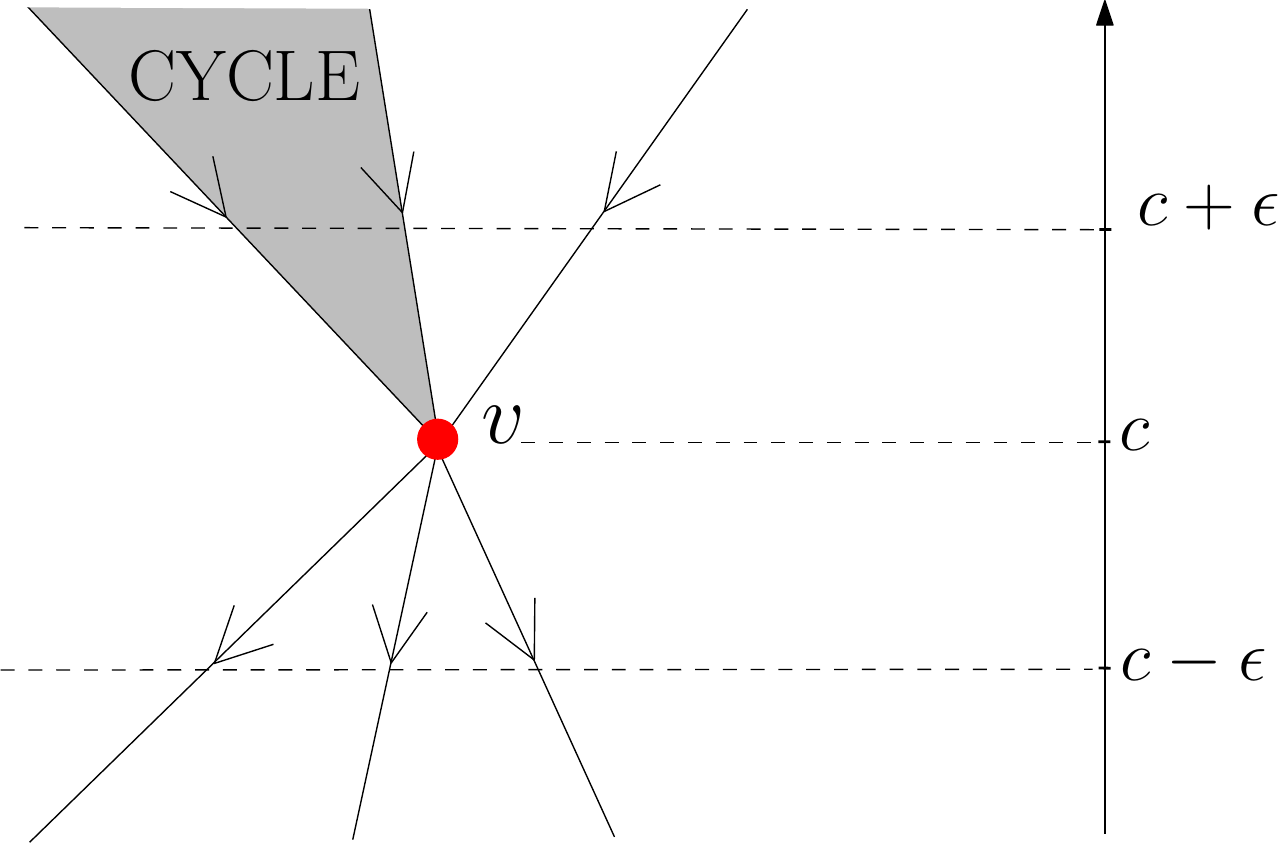}
\caption{$v$ is a vertex in the cycle.}\label{singularidade4}
\end{figure} 
Let $Z$ be the union of the components of $\ss-(\partial U\cap Z_0)$ which do not contain $\Lambda$. By Lemma~\ref{le4}, each component of $Z$ is a manifold of handlebody type. Now we call $X=Z\cup U$ and $Y=\ss-X$. Suppose $Y_1$ is a component of $Y$ such that it corresponds to the cycle on the graph. Therefore, by the Lemma~\ref{le14} one has
\begin{equation}\label{eq29}
\dim H_1(Y_1)=g^+_1+g^+_2\quad\text{and}\quad\dim H_2(Y_1)=1.
\end{equation}
and any other component of $Y$ is of handlebody type.
On the other hand a result (9.11) of~\cite{franks1982} says that $H_2(X,Z)$ has dimension $k$.  
We consider  the exact homology sequence of the pair $(X,Z)$,
\begin{equation}\label{seq1}
H_3(X,Z)\rightarrow H_2(Z)\rightarrow H_2(X)\rightarrow H_2(X,Z)\rightarrow H_1(Z)\stackrel{\alpha}{\rightarrow} H_1(X)\rightarrow \ldots
\end{equation}
Since, $H_3(X,Y)=0$ and $H_2(Z)=0$, this implies that
\begin{equation}\label{eq18}
\dim H_2(X)\leq \dim H_2(X,Z)=k.
\end{equation}
On other hand, we consider the Mayer-Vietoris exact homology sequence,
\begin{equation}
H_3(X)\oplus H_3(Y)\rightarrow H_3(\ss)\rightarrow H_2(X\cap Y)\rightarrow H_2(X)\oplus H_2(Y)\stackrel{\beta}{\rightarrow}H_2(\ss)\rightarrow \ldots
\end{equation}
Both $X$ and $Y$ are compact $3-$manifolds with boundary, so $H_3(X)=0=H_3(Y)$. Also, $X\cap Y$ is composed of $e^+$ closed orientable surfaces, so $H_2(X\cap Y)\cong F^{e^+}_2$. On the other hand, we know that $H_3(\ss)\cong F_2\cong H_2(\ss)$ contains a non separating regular level set. By Lemma~\ref{le16}, it follows that $\beta$ is surjective. Therefore, 
\begin{equation*}
\dim H_2(X)=e^+ -1
\end{equation*} 
By inequality $(\ref{eq18})$, one has 
\begin{equation*}
e^+\leq k+1.
\end{equation*}
If $a=\dim \im\alpha$, from the exact sequence (\ref{seq1}), one has
$$k=e^+ -1+G^- - a.$$ 
Hence,
\begin{equation}\label{eq32}
k+1-G^-\leq e^+.
\end{equation}
On the other hand, $U$ satisfies the Poincar\'e-Hopf condition (\ref{MainTheorem-eq4}) and thus, 
$$e^+ + G^-=e^- + G^+.$$
By inequality (\ref{eq32}), one has
$$k+1-G^+\leq e^-.$$
Now for the last inequality, we need to consider the reverse flow which switches the roles of $e^+$ and $e^-$ and transform $A$ to $A^t$ so $k$ is unchanged.

We call $W=U\cup Y$ and in this case the flow exits $U$ through $Y$. We consider the exact homology sequence of the pair $(W,Y)$, 
\begin{equation}\label{seq2}
\rightarrow  H_3(W,Y)\rightarrow H_2(Y)\rightarrow H_2(W)\rightarrow H_2(W,Y) \rightarrow \ldots
\end{equation}
By Theorem~\ref{teor1}, one has $H_3(W,Y)=0$ and $H_2(Y)\cong F_2$. Therefore, by sequence (\ref{seq2}) 
\begin{equation}\label{eq20}
\dim H_2(W)\leq k+1.
\end{equation}
Now, consider the Mayer-Vietoris exact homology sequence,
\begin{equation}
\rightarrow H_3(W)\oplus H_3(Z)\rightarrow H_3(\ss)\rightarrow H_2(W\cap Z)\rightarrow H_2(W)\oplus H_2(Z)\stackrel{\gamma}{\rightarrow}H_2(\ss) \rightarrow \ldots
\end{equation}
Both $W$ and $Z$ are compact $3$-manifolds with boundary, so $H_3(W)=0=H_3(Z)$. $W\cap Z$ is composed of $e^-$ closed orientable surfaces, hence $H_2(X\cap Y)\cong F^{e^-}_2$. Since the submanifold $W$ contains a non separating regular level set by Lemma~\ref{le16}, $\gamma$ is surjective. Therefore, $\dim H_2(W)=e^-$ and by inequality (\ref{eq20}) one has,
\begin{center}
$e^-\leq k+1.$
\end{center}
The proof is complete in this case. 

Now, suppose that an edge on the cycle is incoming and incident to  $v$ and another edge is outgoing and incident to  $v$, as shown in Figure~\ref{singularidade5}.
\begin{figure}[h!]
\centering\includegraphics[scale=0.5]{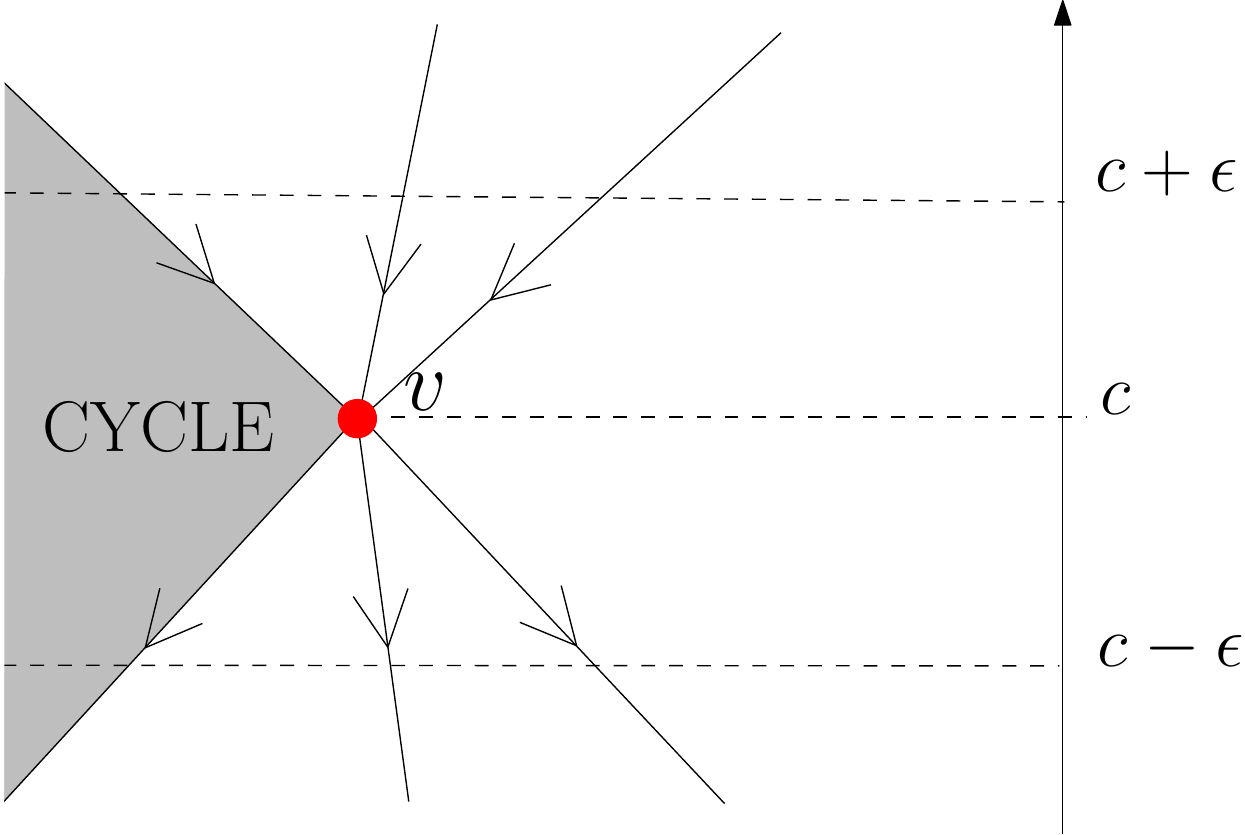}
\caption{$v$ is a vertex in the cycle.}\label{singularidade5}
\end{figure}
We have that $\partial U \cap Z_0$ is a union of surfaces and only one of these is non separating and corresponds to the outgoing edge incident to $v$ on the cycle. We call $Z_1$ the connected component of $Z_0$ that contains the non separating surface  aforementioned.

Let $Z$ be the union of $Z_1$ with the components of $\ss-\left(\partial U \cap (Z_0-Z_1) \right)$ which do not contain $\Lambda$. By Lemma~\ref{le4}, each component of $Z-Z_1$ is a manifold of handlebody type. Now we define $X=Z\cup U$ and $Y=\ss-X$. 

We call $Y_1$ the connected component of $Y$ corresponding to the to the cycle.
By Lemma~\ref{le4} one has that each component of $(Y-Y_1)$ is a manifold of  handlebody type. Note that, $Z\cap Y$ is a closed orientable surface of genus $g$. Moreover, by  Lemma~\ref{le16} one has $H_2(Z)\cong F_2$ and $\di H_1(Z)=\sum g^-_i+g$. With this notation, the result (9.11) of~\cite{franks1982} says that $H_2(X,Z)$ has dimension $k$.

Now, consider the exact homology sequence of the pair $(X,Z)$,
\begin{equation}\label{seq14}
\rightarrow  H_3(X,Z)\rightarrow H_2(Z)\rightarrow H_2(X)\rightarrow H_2(X,Z)\rightarrow H_1(Z)\stackrel{\alpha}{\rightarrow} H_1(X)\rightarrow  \ldots
\end{equation}
Since, $H_3(X,Y)=0$ and $H_2(Z)\cong F_2$, this implies 
\begin{equation}\label{eq28}
\di H_2(X)\leq k+1.
\end{equation}
 Consider the Mayer-Vietoris exact homology sequence,
\begin{equation}\label{secquancia-MV}
0\rightarrow H_3(\ss)\rightarrow H_2(X\cap Y)\rightarrow H_2(X)\oplus H_2(Y)\stackrel{\beta}{\rightarrow}H_2(\ss)\rightarrow \ldots
\end{equation}
Since, $X\cap Y$ is composed of $e^+ +1$ closed orientable surfaces, hence $H_2(X\cap Y)\cong F^{e^++1}_2$. In addition, we know that $H_3(\ss)\cong F_2\cong H_2(\ss)$. Furthermore, there exists a non separating regular level set contained in $Y$. By Lemma~\ref{le16},  $\beta$ is surjective. Therefore, 
\begin{equation*}
\dim H_2(X)=e^+.
\end{equation*} 
By inequality $(\ref{eq28})$, one has 
\begin{equation*}
e^+\leq k+1.
\end{equation*}

On the other hand, as $\beta$ in (\ref{secquancia-MV}) is surjective, one has 
\begin{equation*}
0\rightarrow H_1(X\cap Y)\rightarrow H_1(X)\oplus H_1(Y)\rightarrow H_1(\ss)\rightarrow H_0(X\cap Y)\rightarrow H_0(X)\oplus H_0(Y)\rightarrow H_0(\ss)\rightarrow 0
\end{equation*}
Since $\dim H_1(X\cap Y)=2G^++2g$, $\dim H_1(Y)=G^++g$, $\dim H_0(X\cap Y)=e^++1$ and $\dim H_0(Y)=e^+$, hence 
\begin{equation*}
\dim H_1(X)=G^++g
\end{equation*} 
Since $\partial X\cap\partial Z$ is a genus $g$ surface, then $H_1(X)$ and $H_1(Z)$ both have a $F_2^g$ summand that comes from this surface. Then $\dim \im\alpha\geq g$. 
Also, if $a=\di\im\alpha$, from the exact sequence (\ref{seq14}), one has
$$k=e^+ -1+\sum g^-_i+g - a.$$ 
as $g-a\leq 0$, hence
\begin{equation}\label{eq19}
k+1-G^-\leq e^+.
\end{equation}
Moreover, $U$ satisfies the Poincar\'e-Hopf condition (\ref{MainTheorem-eq4}) and thus, 
$$e^+ + G^-=e^- + G^+.$$
By inequality (\ref{eq19}), one has
$$k+1-G^+\leq e^-.$$
Now for the last inequality, we  consider the reverse flow which switches the roles of $e^+$ and $e^-$ and transforms $A$ to $A^t$ so that $k$ remains unchanged.
\end{proof}

Note that by Lemma~\ref{le4} every other component of the  chain recurrent set that corresponds to a vertex which is not on the cycle is inside a manifold of handlebody type and all separating regular level sets are $\pi_1$-trivial. In this context we have the following proposition.

\begin{proposition}\label{pr5}
Let  $L$ be a Lyapunov graph associated with a Smale flow  $\phi_{t}$ and a Lyapunov function on $\ss$ .
 Let $v$ be a vertex of $L$ labelled with a suspension of a subshift of finite type and $A_{m\times m}$ is the non-negative integer matrix representing this subshift and $k=\di\ke\overline{I-A}:F_2^m\rightarrow F_2^m$ where  $\overline{I-A}$ is the $\mo 2$ reduction of $I-A$. Let $e^+$ and $e^-$ be respectively the indegree and outdegree of  $v$. If $v$ represents a basic set, which is contained in some manifold of handlebody type $M$ in $\ss$, one has:
\begin{center}
$e^+ ,e^- >0,$\\
$e^+\leq k+1,$\\
$e^-\leq k+1,$\\
$k+1-G^-\leq e^{+},\,\text{with}\quad G^-=\sum^{e^-}_{i=1}g^-_i\quad\text{and} $\\
$k+1-G^+\leq e^{-},\,\text{with}\quad G^+=\sum^{e^+}_{j=1}g^+_j,$
\end{center}
where $g_i^+$'s $(g_{j}^{-})$'s are the weights on the incoming $($outgoing$)$ edges incident to the vertex $v$. 
\end{proposition}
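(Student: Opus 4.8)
The strategy is to mimic the argument of Proposition~\ref{pr6}, but now the vertex $v$ lies in a manifold of handlebody type $M$ rather than on the cycle, so the relevant submanifolds built from the level sets are \emph{all} of handlebody type. This is actually the simpler of the two situations, since $H_2=0$ for every piece we will encounter. First I would set up notation exactly as before: with $\Lambda$ the basic set labelling $v$, $f(\Lambda)=c$, and $\epsilon>0$ chosen so $c$ is the only critical value in $[c-\epsilon,c+\epsilon]$, let $U$ be the closure of the component of $f^{-1}(-\infty,c+\epsilon]-f^{-1}(-\infty,c-\epsilon]$ containing $\Lambda$. Then $\partial U$ consists of $e^+$ surfaces the flow enters and $e^-$ surfaces it exits, all lying inside $M$. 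Since $M$ is of handlebody type and $\partial M$ is $\pi_1$-trivial in $\ss$ (because $M$ contains no non-separating level surface — otherwise $\beta(L)\ne0$ near $v$ contradicting Lemma~\ref{le4}), every separating surface inside $M$ has both complementary pieces of handlebody type by Lemma~\ref{le4} and Corollary~\ref{co1}.

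Next I would form $Z$ = the union of components of $M-(\partial U\cap Z_0)$ not containing $\Lambda$, which are all handlebodies, set $X=Z\cup U$ and note that $X$ too sits inside $M$; let $Y$ be the closure of $M-X$ (again a union of handlebodies, since all the cutting surfaces are $\pi_1$-trivial separating surfaces). By Franks' result (9.11) of~\cite{franks1982}, $H_2(X,Z)$ has dimension $k$. From the homology exact sequence of $(X,Z)$, using $H_3(X,Z)=0$ and $H_2(Z)=0$ (handlebody type), I get $\dim H_2(X)\le k$. Now instead of Mayer–Vietoris against a non-separating surface, I use that $X$ is a handlebody-type piece inside $M$: the surface $\partial X\setminus(\partial U\cap Z_0)$ — i.e.\ the $e^+$ incoming boundary components of $U$ together with whatever part of $\partial M$ or of $\partial Y$ they bound — organizes into the relation $\dim H_2(X)=e^+-1$. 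Concretely, $X$ is obtained from the $e^+$ handlebody components of $Y$ (on the "entering" side) by gluing $U$ along $e^+$ surfaces; a Mayer–Vietoris for $X=U\cup Y$ with $U\cap Y$ = those $e^+$ surfaces gives $H_2(X)$ from the image of $H_2(U\cap Y)\cong F_2^{e^+}$ in $H_2(X)\oplus\cdots$, and since $X$ and each $Y$-piece is of handlebody type ($H_2=0$), exactness forces $\dim H_2(X)=e^+-1$. Combining with $\dim H_2(X)\le k$ yields $e^+\le k+1$. Tracking the map $\alpha:H_1(Z)\to H_1(X)$ with $a=\dim\im\alpha$ gives $k=(e^+-1)+G^--a$, hence $k+1-G^-\le e^+$.

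Then the remaining two inequalities come for free exactly as in Proposition~\ref{pr6}: the Poincar\'e–Hopf condition~(\ref{MainTheorem-eq4}) on $U$ gives $e^++G^-=e^-+G^+$, so $k+1-G^-\le e^+$ becomes $k+1-G^+\le e^-$; and reversing the flow swaps $e^+\leftrightarrow e^-$ and replaces $A$ by $A^t$ (leaving $k$ unchanged), so the argument just given also yields $e^-\le k+1$. Finally $e^+,e^->0$ because if, say, $e^+=0$ then $\Lambda$ would be a repeller, forcing a source vertex of outdegree $1$ by condition~(1), contradicting that $v$ is labelled with a suspension of a subshift of finite type (which has both nontrivial stable and unstable sets).

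\textbf{Main obstacle.} The delicate point is justifying $\dim H_2(X)=e^+-1$ \emph{without} a non-separating level surface to make the Mayer–Vietoris connecting map surjective onto $H_2(\ss)$; the correct replacement is to work relative to $M$ (or directly with the handlebody-type pieces), exploiting that every piece in sight has $H_2=0$ so the $e^+$ gluing surfaces contribute exactly a rank drop of one. One must also verify carefully that $Y$ genuinely decomposes into handlebody-type components, i.e.\ that no hidden non-separating surface appears — this is where the hypothesis $\beta(L)$-control and Lemma~\ref{le4} are essential.
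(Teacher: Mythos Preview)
Your overall strategy---work inside the handlebody-type piece $M$ so that every auxiliary submanifold has $H_2=0$---is \emph{not} what the paper does, and your execution of it has a genuine gap at the key step.

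The paper works globally in $\ss$. It observes (via Remark~\ref{ob1} and Corollary~\ref{co1}) that since $\Lambda\subset M$ with $M$ of handlebody type, exactly one of the complementary components, say $Y_1$, is \emph{not} of handlebody type, so $H_2(Y)\cong F_2$ while $H_2(Z)=0$. Then $\dim H_2(X)=e^+-1$ is obtained by combining two inequalities: the reduced long exact sequence of $(\ss,Y)$ together with Lefschetz duality $H_1(\ss,Y)\cong H_2(X)$ gives $\dim H_2(X)\ge e^+-1$, and Mayer--Vietoris for $\ss=X\cup Y$ with $H_2(Y)\cong F_2$ gives $\dim H_2(X)\le e^+-1$. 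The nonzero $H_2(Y)$ is essential here; it is also what makes the reverse-flow argument work, since for $W=U\cup Y$ one gets $\dim H_2(W)\le k+1$ precisely because $H_2(Y)\cong F_2$ contributes the extra $1$.

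Your argument for $\dim H_2(X)=e^+-1$ breaks down. You write ``a Mayer--Vietoris for $X=U\cup Y$ with $U\cap Y$ = those $e^+$ surfaces,'' but by definition $X=Z\cup U$, not $U\cup Y$; the $e^+$ entering surfaces form $X\cap Y$, not a decomposition of $X$. If instead you meant Mayer--Vietoris for $M=X\cup Y_M$ (with your $Y_M$ the closure of $M\setminus X$), then since $H_3(M)=0$ (manifold with boundary) and $H_2(M)=H_2(Y_M)=0$, you would get $\dim H_2(X)=\dim H_2(X\cap Y_M)$, and now you must account for $\partial M$: it is one of the level surfaces bounding $U$ (or lies further out), so $X\cap Y_M$ has only $e^+-1$ components if $\partial M$ is on the entering side. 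You never address this, and your claim that $\partial M$ is $\pi_1$-trivial is neither justified nor relevant to fixing the count.

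In short: the computation of $\dim H_2(X)$ is the heart of the proof, and your Mayer--Vietoris is applied to a decomposition that does not exist. The paper's route---keep $Y$ global in $\ss$, accept that one component has $H_2\cong F_2$, and use Lefschetz duality for the lower bound---is what makes both the equality $\dim H_2(X)=e^+-1$ and the reverse-flow inequality $e^-\le k+1$ come out cleanly.
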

\begin{proof}
By Remark~\ref{ob1}, we can suppose that the component $Y_1$ is not a manifold of handlebody type. Then by Corollary~\ref{co1} the other components of $Y$ and all components of $Z$ are manifolds of handlebody type. In other words $H_2(Y)\cong F_2$, $H_2(Z)= 0$ and $\dim H_1(Z)=G^-$.\\
Consider the exact homology sequence of the pair $(X,Z)$,
\begin{equation}\label{seq5}
\rightarrow H_3(X,Z)\rightarrow H_2(Z)\rightarrow H_2(X)\rightarrow H_2(X,Z)\rightarrow H_1(Z)\stackrel{\alpha}{\rightarrow} H_1(X)\rightarrow \ldots
\end{equation}
Since, $H_3(X,Z)= 0$ and $H_2(Z)= 0$, this implies that
\begin{equation}\label{eq17}
\dim H_2(X)\leq \di H_2(X,Z)=k.
\end{equation}
Now consider the exact homology reduced sequence of the pair $(\ss,Y)$,
\begin{equation*}\label{seq13}
\rightarrow \widetilde{H}_1(Y)\rightarrow \widetilde{H}_1(\ss)\rightarrow\widetilde{H}_1(\ss,Y)\rightarrow \widetilde{H}_0(Y)\rightarrow\widetilde{H}_0(\ss)
\end{equation*}
Since, $\widetilde{H}_0(\ss)= 0$ and $\dim(\widetilde{H}_0(Y))=e^+-1$, we conclude that 
\begin{center}    
  $\dim \widetilde{H}_1(\ss,Y)\geq e^+-1.$
\end{center} 
Moreover, by Lefschetz duality $H_{1}(\ss,Y)\cong H_{2}(\ss-Y,\ss-\ss)\cong H_{2}(X)$. Therefore, 
\begin{equation}\label{eq30}
\di H_{2}(X)\geq e^+-1
\end{equation}
and from inequality $(\ref{eq17})$, one has
\begin{equation*}
e^+\leq k+1
\end{equation*} 
Now we consider the Mayer-Vietoris exact homology sequence,
\begin{equation*}
0\rightarrow H_3(\ss)\rightarrow H_2(X\cap Y)\rightarrow H_2(X)\oplus H_2(Y)\rightarrow H_2(\ss)\rightarrow \ldots
\end{equation*}
Since, $X\cap Y$ is composed of $e^+$ closed orientable surfaces, one has that $H_2(X\cap Y)\cong F^{e^+}_2$. Moreover, we know that $H_3(\ss)\cong F_2\cong H_2(\ss)$. Therefore,
\begin{equation}\label{eq25}
\dim H_2(X)\leq e^+ -1.
\end{equation}
By inequalities $(\ref{eq30})$ and $(\ref{eq25})$, one concludes that
\begin{equation*}
\dim H_2(X)= e^+ -1.
\end{equation*}
If $a=\dim \im\alpha$, from exact sequence (\ref{seq5}), one has that
\begin{equation*}
k=\dim H_2(X)+G^- - a.
\end{equation*}
Hence,
\begin{equation}\label{eq26}
k+1-G^-\leq e^+.
\end{equation} 
On the other hand, $U$ satisfies the Poincar\'e-Hopf condition (\ref{MainTheorem-eq4})  and thus, 
$$e^+ + G^-=e^- + G^+.$$
Thus,
$$k+1-G^+\leq e^-.$$
Now for the last inequality, we need to consider the reverse flow which switches the roles of $e^+$ and $e^-$ and transform $A$ to $A^t$ so that $k$ is unchanged. In this case, the flow enters $U$ through $Z$ and exits through $Y$. We call $W=Y\cup U$ and we consider the exact homology sequence of the pair $(W,Y)$,  
\begin{equation}\label{seq6}
\rightarrow H_3(W,Y)\rightarrow H_2(Y)\rightarrow H_2(W)\rightarrow H_2(W,Y)\rightarrow\dots.
\end{equation}
By Theorem~\ref{teor1}, one has $H_3(W,Y)= 0$ and $H_2(Y)\cong F_2$. Therefore, by  sequence (\ref{seq6}) 
\begin{equation}\label{eq27}
\dim H_2(W)\leq k+1\quad\text{and}\quad\dim H_2(W)\geq 1.
\end{equation}

Now consider the exact sequence of the pair $(\ss,Z)$ and by Lemma~\ref{le13} one has $\dim H_2(W)=e^-$. It follows from (\ref{eq27}) that
\begin{center}
$e^-\leq k+1.$
\end{center}
\end{proof}
%%%%%%%%%%%%%%%%%%%%%%%%%%%%%%%%%%%%%%%%%%%%%%%%%%%%%%%%%%%%%%%%%%%%%%%%%%%%%%%%%%%%%%%%%%%%%%%%%%%%%%%%%%%%%%%%%%%%%%%%%
\subsection{Lyapunov Graph without Cycles}\label{without-cycle}

We continue to use the notation established in the beginning of this section. Let $U$ be the closure of the component of $X_0-Z_0$ which contains $\Lambda$. Then $U$ is a neighborhood of $\Lambda$ whose boundary consists of closed orientable surfaces. Also, the flow is transverse to the boundary of $U$. The flow enters  $e^+$ of these surfaces and exits the remaining $e^-$ surfaces. Let $Z$ be the union of the components of $\ss-(\partial U\cap Z_0)$ which do not contain $\Lambda$.
Now define $X=Z\cup U$ and $Y=\ss-X$. With this notation, the result (9.11) of~\cite{franks1982} says that $H_2(X,Z)$ has  dimension $k$.

First of all, suppose that some component of $\partial U^-$ is a $\pi_1$-non-trivial regular level set. Furthermore, we can suppose that $\partial Z_1$ is also a $\pi_1$-non-trivial regular level set. By Lemma~\ref{le4} and Corollary~\ref{co1}, one has that each component of $Z$ and each component of $Y$ are manifolds of handlebody type. 

\begin{proposition}\label{pr7}
Let  $L$ be a Lyapunov graph associated with a Smale flow  $\phi_{t}$  and a Lyapunov function on $\ss$, such that $L$ is a tree. Furthermore, suppose there exists a $\pi_1$-non-trivial regular level set. Let $v$ be a vertex of $L$ labelled with a suspension of a subshift of finite type and $A_{m\times m}$ is the non-negative integer matrix representing this subshift. Let $k=\di\ke\overline{I-A}:F_2^m\rightarrow F_2^m$ where  
$\overline{I-A}$ is the $\mo 2$ reduction of $I-A$. Then, if $e^+$ and $e^-$ are respectively the indegree and outdegree of $v$, one has:
\begin{center}
$e^+ ,e^- >0,$\\
$e^+\leq k+1,$\\
$e^-\leq k+1,$\\
$k+1-G^-\leq e^{+},\,\text{with}\quad G^-=\sum^{e^-}_{i=1}g^-_i\quad\text{and} $\\
$k+1-G^+\leq e^{-},\,\text{with}\quad G^+=\sum^{e^+}_{j=1}g^+_j,$
\end{center}
where $g_i^+$'s $(g_{j}^{-})$'s are the weights on the incoming $($outgoing$)$ edges incident to the vertex $v$. 
\end{proposition}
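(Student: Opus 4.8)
The strategy is to mimic the argument of Proposition~\ref{pr6} in the cyclic case, but now exploiting the $\pi_1$-non-trivial regular level set to force $H_2(Z)$ and $H_2(Y)$ to vanish. The key structural observation, already recorded before the statement, is that because some component of $\partial U^-$ (and hence $\partial Z_1$) is $\pi_1$-non-trivial, Lemma~\ref{le4} part~(2) together with Corollary~\ref{co1} implies that \emph{every} component of both $Z$ and $Y$ is of handlebody type; in particular $H_2(Z)=0$, $H_2(Y)=0$, $\di H_1(Z)=G^-$ and $\di H_1(Y)=G^+$. This is exactly the homological input that makes the bounds come out the same as in the tree-with-handlebody situation of Proposition~\ref{pr5}.

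First I would establish the two bounds on $e^+$. From the exact homology sequence of the pair $(X,Z)$,
\begin{equation*}
\rightarrow H_3(X,Z)\rightarrow H_2(Z)\rightarrow H_2(X)\rightarrow H_2(X,Z)\rightarrow H_1(Z)\stackrel{\alpha}{\rightarrow} H_1(X)\rightarrow\ldots
\end{equation*}
and $H_3(X,Z)=0$, $H_2(Z)=0$, together with $\di H_2(X,Z)=k$ by (9.11) of~\cite{franks1982}, one gets $\di H_2(X)\leq k$. For the reverse inequality I would use the Lefschetz-duality computation from the proof of Proposition~\ref{pr5}: the reduced sequence of $(\ss,Y)$ gives $\di\widetilde{H}_1(\ss,Y)\geq e^+-1$ since $\di\widetilde H_0(Y)=e^+-1$, and Lefschetz duality identifies $H_1(\ss,Y)\cong H_2(X)$, so $\di H_2(X)\geq e^+-1$. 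Next, the Mayer-Vietoris sequence of $X,Y$, using $H_2(X\cap Y)\cong F_2^{e^+}$ (there are $e^+$ surfaces in $X\cap Y$) and $H_3(\ss)\cong F_2\cong H_2(\ss)$ together with surjectivity of $H_2(X)\oplus H_2(Y)\to H_2(\ss)$ (because a non-separating level set sits in $X$, by Lemma~\ref{le16}), pins down $\di H_2(X)=e^+-1$. Combining, $e^+\leq k+1$, and tracing the connecting map $\alpha$ with $a=\di\im\alpha$ yields $k=\di H_2(X)+G^- -a=e^+-1+G^--a$, hence $k+1-G^-\leq e^+$. The Poincar\'e-Hopf relation (\ref{MainTheorem-eq4}), $e^+ + G^- = e^- + G^+$, then converts this into $k+1-G^+\leq e^-$.

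For the remaining bound $e^-\leq k+1$, I would pass to the reverse flow, which interchanges $e^+\leftrightarrow e^-$ and replaces $A$ by $A^t$ (leaving $k=\di\ke\overline{I-A}$ unchanged since $\overline{I-A^t}=(\overline{I-A})^t$ and a matrix and its transpose have the same $F_2$-nullity). Under the reverse flow the non-trivial level set still exists and the same handlebody-type conclusions hold, so the argument of the previous paragraph applied to the reversed flow gives $e^-\leq k+1$. Finally $e^+,e^->0$ is immediate since a vertex labelled by a suspension of a nontrivial subshift of finite type has nonempty unstable and stable sets, so the flow must both enter and leave $U$.

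The main obstacle I anticipate is \emph{verifying the handlebody-type hypothesis uniformly}: one must check that after choosing $Z_1$ to be the component of $Z_0$ carrying the $\pi_1$-non-trivial surface, Corollary~\ref{co1} genuinely applies to \emph{all} the other components of $Z$ and $Y$ — i.e. that the relevant sub-level manifolds $\me^{\pm}_{\tcc}$ nested inside a handlebody-type piece are themselves of handlebody type, and that no component of $Y$ plays the role of the "exceptional" $Y_1$ of Proposition~\ref{pr5}. Once this is in place, the homological bookkeeping is a routine repetition of the computations in Propositions~\ref{pr5} and~\ref{pr6}; the only subtlety in those computations is the surjectivity of the Mayer-Vietoris map onto $H_2(\ss)$, which is guaranteed here precisely because the hypothesis provides a non-separating (indeed $\pi_1$-non-trivial) level set, and Lemma~\ref{le16} turns this into the needed isomorphism.
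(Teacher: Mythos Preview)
Your overall architecture (handlebody-type for all components of $Y$ and $Z$, then the $(X,Z)$ sequence plus Lefschetz duality, then Poincar\'e--Hopf and time reversal) matches the paper's, but the step where you ``pin down $\di H_2(X)=e^+-1$'' has a genuine gap.

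You invoke Lemma~\ref{le16} by asserting that ``a non-separating level set sits in $X$.'' This is false here: the hypothesis is that $L$ is a \emph{tree}, so every regular level component separates $\ss$. The assumption in this proposition is that some level set is $\pi_1$-\emph{non-trivial}, which is a different condition from non-separating; a $\pi_1$-non-trivial separating surface is null-homologous, so Lemma~\ref{le16} gives you nothing about surjectivity of $H_2(X)\oplus H_2(Y)\to H_2(\ss)$. (Incidentally, even if surjectivity did hold, with $H_2(Y)=0$ Mayer--Vietoris would yield $\di H_2(X)=e^+$, not $e^+-1$, so the arithmetic would not rescue the argument either.)

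The paper does not try to compute $\di H_2(X)$ exactly. It only records the bound $\di H_2(X)\le e^+$ coming from the $(\ss,Y)$ sequence via Lefschetz duality, and then uses the $\pi_1$-non-trivial hypothesis in a different place: the non-trivial loop $\gamma\subset\partial Z_1\subset Z$ is non-zero in $H_1(\ss)$, hence non-zero in $H_1(X)$, so the connecting map $\alpha:H_1(Z)\to H_1(X)$ has $\im\alpha\neq 0$, i.e.\ $a\ge 1$. From $k=\di H_2(X)+G^--a$ one then gets $k<\di H_2(X)+G^-\le e^++G^-$, which is exactly $k+1-G^-\le e^+$. This is the missing ingredient in your sketch: the $\pi_1$-non-triviality is used to force $a\ge 1$, not to produce a non-separating class. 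Once you insert that observation, the rest of your outline (Poincar\'e--Hopf for $k+1-G^+\le e^-$, and time reversal for $e^-\le k+1$) goes through as you wrote it.
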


\begin{proof}
Suppose $\Lambda$ is the basic set of $\phi_{t}$ corresponding to $v$. 
Consider  the exact homology sequence of the pair $(X,Z)$,
\begin{equation}\label{seq3}
\rightarrow H_3(X,Z)\rightarrow H_2(Z)\rightarrow H_2(X)\rightarrow H_2(X,Z)\rightarrow H_1(Z)\stackrel{\alpha}{\rightarrow} H_1(X)\rightarrow \ldots
\end{equation}
Since, $H_3(X,Y)=0$ and $H_2(Z)=0$, this implies that
\begin{equation}\label{eq21}
\dim H_2(X)\leq k.
\end{equation}
Now consider the exact homology reduced sequence of the pair $(\ss,Y)$,
\begin{equation}\label{seq4}
\rightarrow \widetilde{H}_1(Y)\rightarrow \widetilde{H}_1(\ss)\rightarrow\widetilde{H}_1(\ss,Y)\rightarrow \widetilde{H}_0(Y)\rightarrow\widetilde{H}_0(\ss)\rightarrow 0
\end{equation}
Since, $\widetilde{H}_0(\ss)=0$ and $\dim(\widetilde{H}_0(Y))=e^+-1$, we conclude that 
\begin{center}    
  $\dim \widetilde{H}_1(\ss,Y)\geq e^+-1.$
\end{center} 
Moreover, by Lefschetz duality $H_{1}(\ss,Y)\cong H_{2}(\ss-Y,\ss-\ss)\cong H_{2}(X)$. Therefore, $\di H_{2}(X)\geq e^+-1$ and from inequality $(\ref{eq21})$, one has
\begin{equation*}
e^+\leq k+1
\end{equation*} 
Also, from sequence (\ref{seq4}), we have that 
\begin{equation}\label{eq23}
\dim H_2(X)\leq e^+-1+\dim H_1(\ss)=e^+.
\end{equation}
If $a=\di \im\alpha$, from the exact sequence (\ref{seq3}), one has
\begin{equation}\label{eq22}
k=\dim H_2(X)+G^- - a.
\end{equation}
 
The hypothesis that there exits a $\pi_1$-non-trivial regular level set in $Z$, implies that $\im\alpha\neq 0$. Therefore, from inequality (\ref{eq22}) we can conclude that
\begin{equation}
k<\dim H_2(X)+G^-.
\end{equation}
Now by inequality (\ref{eq23}), one has that
\begin{equation*}
k+1-G^-\leq e^+.
\end{equation*}
On the other hand, $U$ satisfies the Poincar\'e-Hopf condition (\ref{MainTheorem-eq4})  and thus, 
\begin{center}
$e^+ + G^-=e^- + G^+.$
\end{center}
Hence, one has
$$k+1-G^+\leq e^-.$$
Now for the last inequality, we need to consider the reverse flow which switches the roles of $e^+$ and $e^-$ and transforms $A$ to $A^t$ so that $k$ is unchanged.

For the general case, we can suppose that a $\pi_1$-non-trivial regular level set is contained in some component of $Z$. Then it suffices to repeat the arguments in the above proof.
\end{proof}
Now we suppose that each connected component of a regular level set is $\pi_1$-trivial in $\ss$.

 Let $k=\di \ke\overline{I-A}:F^m_2\rightarrow F^m_2$ where  $\overline{I-A}$ is the $\mo 2$ reduction of $I-A$ and $e^+$ ($e^-$) is the indegree (outdegree) of $v$. With the definition of $X$, $Y$ and $Z$ as defined in the beginning of Subsection~\ref{without-cycle}, one has the following lemma.

\begin{proposition}\label{le7}
Let $v$ be a vertex of $L$ labelled with a suspension of a subshift of finite type and $A_{m\times m}$ is the non-negative integer matrix representing this subshift. Suppose each component of $Y$ and $Z$ are manifolds of handlebody type then one has:
\begin{center}
$e^+ ,e^- >0,$\\
$e^+\leq k,$\\
$e^-\leq k,$\\
$k-G^-\leq e^{+},\,\text{with}\quad G^-=\sum^{e^-}_{i=1}g^-_i\quad\text{and} $\\
$k-G^+\leq e^{-},\,\text{with}\quad G^+=\sum^{e^+}_{j=1}g^+_.$
\end{center}
where $g_i^+$'s $(g_{j}^{-})$'s are the weights on the incoming $($outgoing$)$ edges incident to $v$.
\end{proposition}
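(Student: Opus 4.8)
The plan is to adapt the homological accounting of Propositions~\ref{pr6}, \ref{pr5} and~\ref{pr7}, using crucially that now \emph{every} component of $Y$ and of $Z$ is of handlebody type with $\pi_1$-trivial boundary. This extra rigidity will let me compute $\di H_2(X)$ \emph{exactly}, and it is precisely this that sharpens the bound $k+1$ appearing in the earlier propositions to $k$ here. I would keep the notation fixed at the start of Subsection~\ref{without-cycle}: $U$ is the closure of the component of $X_0-Z_0$ containing $\Lambda$, $Z$ is the union of the components of $\ss-(\partial U\cap Z_0)$ that miss $\Lambda$, and $X=Z\cup U$, $Y=\ss-X$. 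Since $L$ is a tree, $X$ is connected, $Y$ splits into exactly $e^+$ components $Y_1,\dots,Y_{e^+}$ with $\partial Y_i=\tcc_{g^+_i}$, and $Z$ into exactly $e^-$ components $Z_1,\dots,Z_{e^-}$ with $\partial Z_j=\tcc_{g^-_j}$; each $\tcc_{g^\pm_i}$ is a connected regular level surface, hence $\pi_1$-trivial by the standing hypothesis. Being of handlebody type, the pieces satisfy $H_2(Y)=H_2(Z)=0$, $\di H_1(Y)=G^+$, $\di H_1(Z)=G^-$, while $\widetilde H_0(Y)\cong F_2^{e^+-1}$ and $\widetilde H_0(Z)\cong F_2^{e^--1}$; and the result $(9.11)$ of~\cite{franks1982} gives $\di H_2(X,Z)=k$.

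The key step is the identity $\di H_2(X)=e^+$. For it I would use the reduced long exact sequence of the pair $(\ss,Y)$,
\begin{equation*}
H_1(Y)\stackrel{i_\ast}{\longrightarrow}H_1(\ss)\longrightarrow H_1(\ss,Y)\longrightarrow \widetilde H_0(Y)\longrightarrow\widetilde H_0(\ss)=0 .
\end{equation*}
Applying Lemma~\ref{le13} to each $Y_i$ (a handlebody-type submanifold of $\ss$ with the connected, $\pi_1$-trivial boundary $\tcc_{g^+_i}$) shows that $H_1(Y_i)\to H_1(\ss)$ is zero, hence $i_\ast=0$ on all of $H_1(Y)$. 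Therefore $0\to H_1(\ss)\to H_1(\ss,Y)\to\widetilde H_0(Y)\to 0$ is exact, which gives $\di H_1(\ss,Y)=1+(e^+-1)=e^+$; and by Lefschetz duality $H_1(\ss,Y)\cong H_2(X)$, exactly as in Propositions~\ref{pr5} and~\ref{pr7}, so $\di H_2(X)=e^+$.

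Now the five conclusions drop out of the exact homology sequence of the pair $(X,Z)$,
\begin{equation*}
H_2(Z)\longrightarrow H_2(X)\longrightarrow H_2(X,Z)\stackrel{\partial}{\longrightarrow}H_1(Z)\longrightarrow H_1(X)\longrightarrow\cdots
\end{equation*}
Since $H_2(Z)=0$, the map $H_2(X)\to H_2(X,Z)$ is injective, whence $e^+=\di H_2(X)\le\di H_2(X,Z)=k$; and $\di\im\partial=k-\di\ker\partial=k-e^+$, so $k-e^+\le\di H_1(Z)=G^-$, i.e.\ $k-G^-\le e^+$. The Poincar\'e--Hopf relation~(\ref{MainTheorem-eq4}) for the suspension $\Lambda$ reads $e^++G^-=e^-+G^+$. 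Running the same argument for the reversed flow $\phi_{-t}$ — which fixes $U$, interchanges $\partial U^+$ and $\partial U^-$ (hence $Y$ and $Z$, still unions of handlebodies with $\pi_1$-trivial boundary), replaces the structure matrix by its transpose (so $k$ is unchanged), and swaps $e^+\leftrightarrow e^-$ and $G^+\leftrightarrow G^-$ — then yields $e^-\le k$ and $k-G^+\le e^-$. Finally $e^+,e^->0$, because a suspension of a subshift of finite type is neither an attractor nor a repeller: its stable and unstable manifolds are $2$-dimensional and cannot fill a neighbourhood in the $3$-manifold $\ss$, so the flow both enters and leaves $U$.

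I expect the only genuinely delicate point to be the exact equality $\di H_2(X)=e^+$. A priori, Mayer--Vietoris for $\ss=X\cup Y$ only forces $e^+-1\le\di H_2(X)\le e^+$, and in the settings of Propositions~\ref{pr5} and~\ref{pr7} it is the smaller value that occurs, since there a non-handlebody component carries the generator of $H_2(\ss)$. What excludes that here is precisely the hypothesis that \emph{all} of $Y$ is of handlebody type with $\pi_1$-trivial boundary, which forces $H_1(Y)\to H_1(\ss)$ to vanish; converting this vanishing into the equality $\di H_2(X)=e^+$ through the correct exact sequence together with Poincar\'e--Lefschetz duality (Proposition~\ref{pr2}) is the one place where care is needed.
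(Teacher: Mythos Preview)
Your proof is correct and follows essentially the same route as the paper: you use Lemma~\ref{le13} on each handlebody-type component of $Y$ to kill $H_1(Y)\to H_1(\ss)$, deduce the exact equality $\di H_2(X)=e^+$ via the reduced sequence of $(\ss,Y)$ and Lefschetz duality, and then read off all the inequalities from the exact sequence of $(X,Z)$ together with the reverse-flow argument. The only cosmetic difference is that the paper derives $k-G^+\le e^-$ from Poincar\'e--Hopf and $k-G^-\le e^+$ before invoking the reverse flow for $e^-\le k$, whereas you obtain both from the reversed flow directly; either way works.
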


\begin{proof}
Since, $Z$ and $Y$ are disjoint unions of manifolds of handlebody type, one has that $H_2(Z)=0=H_2(Y)$, $\dim H_1(Z)=G^-$ and $\dim H_1(Y)=G^+$. Now  consider the exact homology sequence of the pair $(X,Z)$,
\begin{equation}\label{seq7}
\rightarrow H_3(X,Z)\rightarrow H_2(Z)\rightarrow H_2(X)\rightarrow H_2(X,Z)\rightarrow H_1(Z)\stackrel{\alpha}{\rightarrow} H_1(X)\rightarrow \ldots
\end{equation}
Since $H_3(X,Z)=0$ and $H_2(Z)=0$, one has
\begin{equation}\label{eq24}
\di H_2(X)\leq k. 
\end{equation}
Consider the reduced exact homology sequence of the pair $(\ss,Y)$,
\begin{equation}\label{seq8}
\rightarrow H_1(Y)\stackrel{\beta}\rightarrow H_1(\ss)\rightarrow H_1(\ss,Y)\rightarrow \widetilde{H}_0(Y)\rightarrow 0.
\end{equation}
By Lemma~\ref{le13} one has that $\beta$ is trivial and $\di \widetilde{H}_0(Y)=e^-+1$. Thus, by sequence $(\ref{seq8})$, one has  
      $$\dim \widetilde{H}_1(\ss,Y)= e^+.$$ 
Using  Lefschetz duality $H_{1}(\ss,Y)\cong H_{2}(\ss-Y,\ss-\ss)\cong H_{2}(X)$. Therefore, $\di H_{2}(X)= e^+$ and from equality $(\ref{eq24})$, one has
\begin{equation*}
e^+\leq k.
\end{equation*}
 On the other hand, if $a=\di\im \alpha$ one has from  sequence~(\ref{seq7}),
 $$k=\di H_2(X)+\di H_1(Z)-a.$$
Therefore,
$$k-G^-\leq e^+$$ 
Furthermore, $U$ satisfies the Poincar\'e-Hopf condition (\ref{MainTheorem-eq4}) and thus,
$$e^+ + G^-=e^- + G^+.$$
Hence, one has
$$k-G^+\leq e^-.$$
Now for the last inequality, we need to consider the reverse flow which switches the roles of $e^+$ and $e^-$ and transforms $A$ to $A^t$ so that $k$ is unchanged.
\end{proof}
 
\textbf{Proof of the necessity of the conditions of Theorem~\ref{MainTheorem}.}
\begin{enumerate}
\item items (1) and (2) follow from Corollary 3.1 in~\cite{de1993gradient};
\item item (4) follows by Theorem 4.7 in \cite{cruz1998cycle};
\item item (3) follows from Proposition~\ref{co};
\item item 3(a) follows from Propositions~\ref{pr6} and~\ref{pr5};
\item item 3(b)(i) follows from Propositions~\ref{pr5},~\ref{pr7} and~\ref{le7};
\item item 3(b)(ii),\\
if there exists a regular level set $\mathcal{T}_g$ which is $\pi_1$-non-trivial, then clearly $g\neq 0$. Otherwise, if each regular level set is $\pi_1$-trivial, by the use of Lemma~\ref{le4} on the edges of the graph we obtain that there exists a  vertex $v_0$ labelled with a suspension of a subshift of finite type associated to the non-negative integer matrix  $A_{m\times m}$ such that
\begin{center}
$e^+ ,e^- >0,$\\
$e^+\leq k,$\\
$e^-\leq k,$\\
$k-G^-\leq e^{+},\,\text{with}\quad G^-=\sum^{e^-}_{i=1}g^-_i\quad\text{and} $\\
$k-G^+\leq e^{-},\,\text{with}\quad G^+=\sum^{e^+}_{j=1}g^+_j,$
\end{center}
where $k=\di\ke\left(\left(I-B\right):F^{m}_{2}\rightarrow F^{m}_{2}\right)$, $F_{2}=\Z/2$, $b_{ij}=a_{ij}\mo 2$ and $g_{i}^{+}(g_{j}^{-})$ are the weights on the incoming $($outgoing$)$ edges incident to the vertex $v_0$. By hypothesis,
$$
k-G^-<k+1-G^-\leq e^-\leq k 
$$
so $k-G^-<k$, that is $G^->0$ thus there is at least one $g^-_j>0$. Similarly, there is at least one $g^+_k>0$.
\end{enumerate}
\cqd

\section{Sufficient Conditions on Abstract Lyapunov Graphs associated to Smale Flows}\label{sec-suff}

In this section we prove propositions that will combine in order to form the proof of the sufficiency of the conditions stated in the main theorem of this work, Theorem~\ref{MainTheorem}.
In $\cite{de1987smale}$ the construction of basic block for singularities, periodic orbits and subshifts of finite type that verify the conditions in Theorem~\ref{teorem1} are presented. Hence, there is no need to present thes herein. In what follows, we present a construction of a basic block for a vertex $v$  that satisfies the condition of Proposition~\ref{le7}. For that, we need some definitions. 
\begin{definition} Let $C_1$ and $C_2$ be solid concentric cylinders with $C_2\subset C_1$. A \textbf{round handle $R$} is a $3-$manifold homeomorphic to $C_1-C_2$ containing a saddle type periodic orbit of period equal to one and with a flow defined on $R$ as follows: the flow enters  two disjoint boundary components of $R$ homeomorphic to annuli and exits two other disjoint boundary components also homeomorphic to annuli. See Figure~\ref{figure9}
\end{definition}
\begin{figure}[h!]
  \centering{\includegraphics[width=5cm]{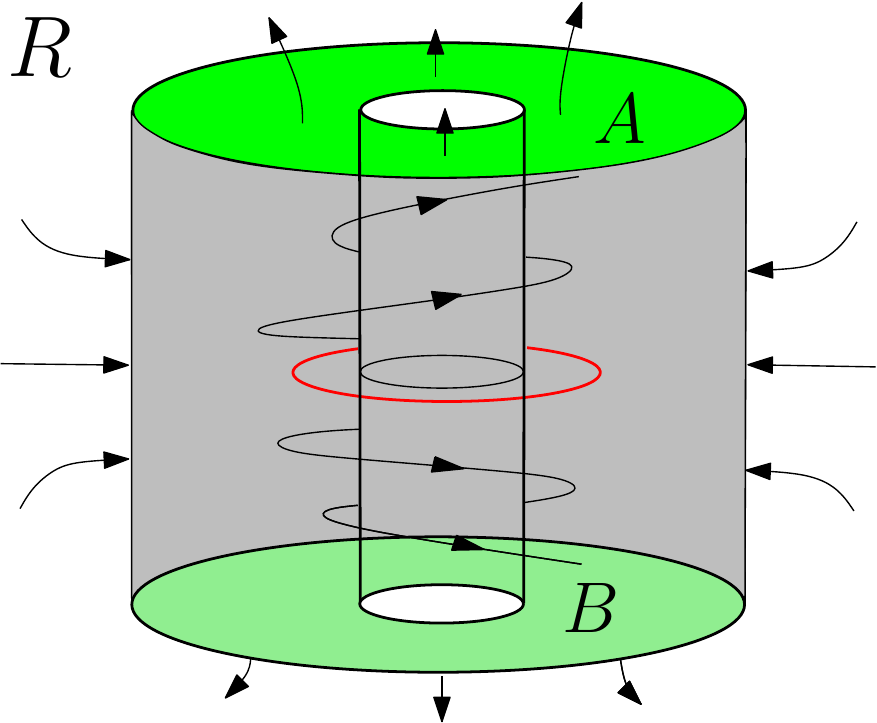}}
\caption{Round handle.}\label{figure9}
\end{figure}
\begin{definition}
A one-handle $H_i$ to a strip $D^1\times D^1$ and is transversal to the flow. A one-handle $H_s$ in a round handle $R_s$ will be chosen so that $H_s\times \mathbb{S}^1=R_s$. A one-handle $H_t$ in a nilpotent handle $N_t$ will be chosen so that $H_t\times D^1=N_t$.
\end{definition}
\begin{definition}
A \textbf{basic block} for a one-dimensional set $\Lambda$ of a Smale flow $\phi_t$ on $\ss$ and Lyapunov function $f:\ss\rightarrow \re$ with $f(\Lambda)=c$ is the component of $X=f^{-1}([c-\epsilon,c-\epsilon])$ which contain $\Lambda$, where $\epsilon>0$ is chosen so that $X$ contains on other basic set and
\begin{enumerate}
\item there exists $($ not necessarily connected $)$ codimension one submanifolds with boundary $U$ and $V$ in $X$ with $U\subset V$ is everyehere transversal to the flow.
\item The first return map $\mu:U\rightarrow \inte V$ is a well defined smooth map and there is a hyperbolic handle set $H\subset U$ with every orbit of $\Lambda$ intersecting $H$ and every $h_i\subset H$ intersecting $\Lambda$.
\item if $x\in H$ but $r(x)\notin H$ then $\phi_t(x)\cap H=\emptyset$ $\forall t>0$ and $f(\phi_{t_0}(x))=c-\epsilon$ for some $t_0>0$. Likewise, if $x\in H$ but $r^{-1}(x)\notin H$ then $\phi_t(x)\cap H=\emptyset$ $\forall t<0$ and $f(\phi_{t_0}(x))=c+\epsilon$ for some $t_0<0$.
\end{enumerate}
\end{definition}

%\subsubsection{A special Smale flow on handlebodies}\label{subsection1}
In the following construction, we build specific Smale flows on handlebodies which are transverse to its boundary. This construction will be very important for the next subsection.
There exists a Smale flow on a handlebody $H_g$ of genus $g$, with $g$ attracting periodic orbits $\lambda_{a_i}$, $g-1$ saddle periodic orbits $\lambda_{s_i}$ and $g-1$ repelling singularities $p_i$. $H_g$  is obtained by gluing $g-1$ round handles to $g$ solid tori as shown in Figure~\ref{figure23}.
\begin{figure}[h!]
  \centering{\includegraphics[width=12cm]{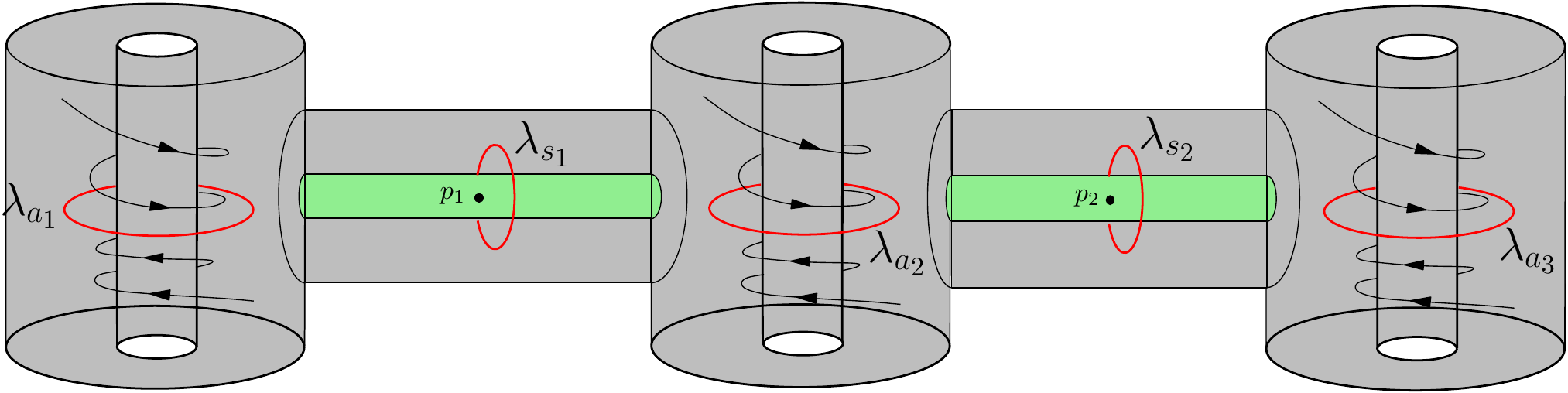}}
\caption{$H_3$ is a handlebody of genus $3$.}\label{figure23}
\end{figure}
The flow on $H_g$ is induced by the periodic orbits and singularities and will be denoted by $\rho_t$. Since the flow is transversal and points inward on  $\partial H_g$, one has 
$$
\epsilon=d(\partial H_g, \rho_1(\partial H_g))>0.
$$
\begin{proposition}\label{pr10}
Let $S$ be a surface homeomorphic to $D^1\times D^1$ embedded in a tubular $\epsilon/2$-neighborhood of the boundary of handlebody, such that the flow described above is transversal to $S$. Then there exists a neighborhood $V$ of $S$ such that the first return map, $\mu:S\rightarrow V$, is smooth. 
\end{proposition}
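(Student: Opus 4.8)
The plan is to read off the first-return map from the explicit form of $\rho_t$ near $\partial H_g$ and then use transversality for smoothness. First I would recall, from the construction of $H_g$, that $H_g$ is assembled from $g$ solid tori carrying the spiral flow toward an attracting periodic orbit $\lambda_{a_i}$ and $g-1$ round handles carrying a saddle suspension, and that the round-handle gluing annuli lie in the interior, away from $\partial H_g$. Hence in a collar of $\partial H_g$ inside $H_g$ the flow $\rho_t$ is, on the outer part of each solid torus $D^{2}\times\mathbb{S}^{1}$, the suspension of a disk diffeomorphism (the spiral flow $(z,\theta)\mapsto(\lambda^{t}z,\theta+t)$), and on any part of $\partial H_g$ coming from a round handle it is the analogous saddle suspension. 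In either case the collar carries a global surface $\Sigma$ transverse to $\rho_t$ (a ``meridian'' section), and every orbit meeting the collar recrosses $\Sigma$ after a time equal to the period of the ambient periodic orbit. I would enlarge the given strip $S$ slightly to a surface $\widehat{S}$ transverse to $\rho_t$, of this sectional type, with $S\subset\inte\widehat{S}$ and with $\widehat{S}$ disjoint from a neighbourhood of the cores.

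Next I would use the quantitative hypotheses. From $\epsilon=d(\partial H_g,\rho_{1}(\partial H_g))$ one gets that $\rho_{1}(H_g)$ lies at distance $\geq\epsilon$ from $\partial H_g$ (a path from a point of $\rho_1(H_g)$ to $\partial H_g$ of length $<\epsilon$ would cross $\rho_1(\partial H_g)$, which is $\epsilon$-far from $\partial H_g$, a contradiction); since $S$ lies in the $\epsilon/2$-neighbourhood of $\partial H_g$, this forces $S\cap\rho_1(H_g)=\emptyset$, so for each $x\in S$ the backward orbit leaves $H_g$ in time $<1$. This pins $S$ in the ``just entered'' part of the collar, away from the gluing annuli and the cores, so that the forward orbit segment $\{\rho_t(x):0\le t\le 1+\delta\}$ stays inside the region where the suspension model is valid and therefore recrosses $\widehat{S}$. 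I would then set $\tau(x):=\inf\{t>0:\rho_t(x)\in\widehat{S}\}$, $\mu(x):=\rho_{\tau(x)}(x)$, and take $V$ to be an open neighbourhood of $\widehat{S}$ in $H_g$; this is a neighbourhood of $S$, and $\mu(S)\subset\widehat{S}\subset V$, so $\mu\colon S\to V$ is well defined.

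For smoothness I would argue classically. The surface $\widehat{S}$ is a smooth codimension-one submanifold everywhere transverse to the vector field $X$ generating $\rho_t$, so near a point $\mu(x_{0})$ write $\widehat{S}=g^{-1}(0)$ for a submersion $g$ with $dg(X)\neq0$. The map $(t,x)\mapsto g(\rho_t(x))$ is smooth and, by transversality, has nonzero $t$-derivative at $(\tau(x_0),x_0)$; the implicit function theorem gives a smooth local solution $t=\tau(x)$, which coincides with the first-return time since $[0,\tau(x_0)]$ misses $\widehat{S}$ for $x$ near $x_0$. Hence $\tau$ is smooth on $S$ and $\mu(x)=\rho_{\tau(x)}(x)$ is smooth as the composition of the smooth flow with $x\mapsto(\tau(x),x)$.

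The step I expect to be the main obstacle is the well-definedness of $\mu$: proving that every orbit issuing from $S$ genuinely returns to $\widehat{S}$. This is exactly where one must invoke the precise local form of $\rho_t$ near $\partial H_g$ supplied by the construction (that the relevant collar is foliated by a model suspension flow, so orbits wind around once and recross a meridian section) together with the $\epsilon/2$-collar estimate, which keeps the orbit segment of length $\approx1$ inside that collar long enough to complete the return. Once well-definedness is established, the smoothness claim is routine, being merely transversality and the implicit function theorem for the smooth flow $\rho_t$.
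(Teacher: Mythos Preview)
Your argument is correct in outline, but it follows a different path from the paper's. The paper's proof is shorter and more geometric: instead of keeping the enlarged transversal $\widehat{S}$ \emph{away} from the core periodic orbit, the paper does the opposite --- it extends $S$ inward to a rectangle $V_S$ that transversally \emph{meets} the core $\lambda_{s_j}$ (or $\lambda_{a_i}$, in the solid-torus case). This makes $V_S$ a genuine local Poincar\'e section of a hyperbolic periodic orbit, so the existence and smoothness of the first return map $\mu_S\colon V_S\to V_S$ follow at once from classical cross-section theory, with no need to argue separately that forward orbits stay in a ``suspension region'' long enough to return and no need for the explicit implicit-function-theorem computation. Having done that, the paper notes that $\mu_S(S)$ misses $\lambda_{s_j}$ (since $\lambda_{s_j}$ has index~$1$ and $S$ itself is off the core) and chooses $V\subset V_S$ accordingly. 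Your route --- keeping the transversal disjoint from the cores, appealing to the explicit suspension model, and establishing smoothness by hand --- reaches the same conclusion, but the step you yourself flag as the main obstacle (that every orbit from $S$ really does recross $\widehat{S}$) is precisely what the paper's device of extending \emph{through} the periodic orbit buys for free. Note also that the paper's proof makes no explicit use of the $\epsilon/2$ collar estimate; it only uses that $S$ sits inside one of the building blocks of $H_g$ containing a periodic orbit.
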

\begin{proof}
Suppose that $S$ is embedded in some round handle $R_j$ with periodic orbit 
$\lambda_{s_j}$. Hence, one can extend $S$ to a surface $V_S$, which is homeomorphic to $D^1\times D^1$ and has transversal intersection with $\lambda_{s_j}$, as shown in Figure~\ref{figure24}. 
\begin{figure}[h!]
  \centering{\includegraphics[width=13cm]{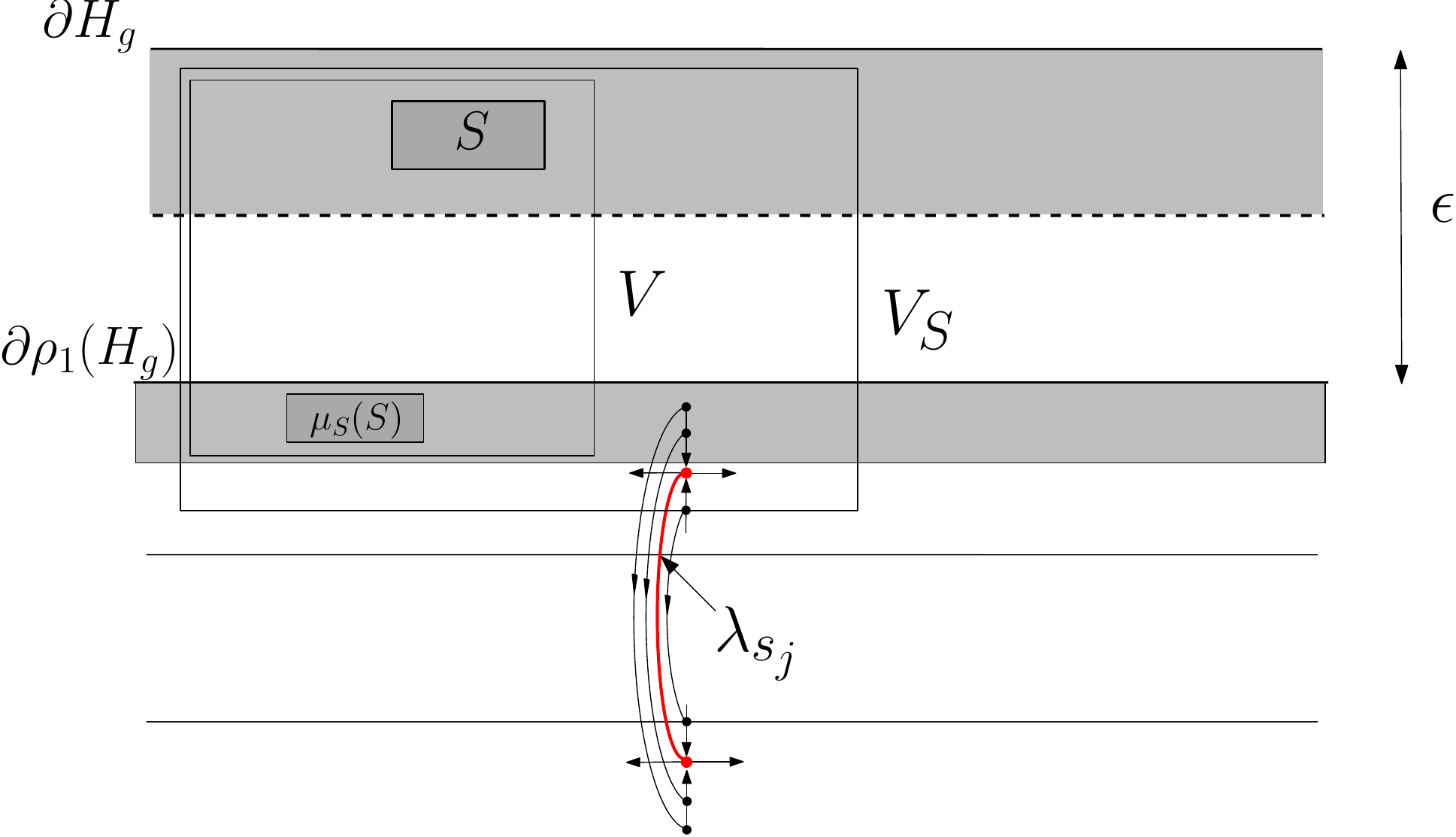}}
\caption{The rectangle $S$ is the interior of $\partial H_g\times I$.}\label{figure24}
\end{figure}
\\
Therefore, $V_S$ is a cross section of $\lambda_{s_j}$, so the first return map $\mu_S$ is defined for $V_S$ and is smooth. Now if it is necessary one can extend $V_S$, such that $V_S$ contains $\mu_S(S)$. On the other hand, since $\lambda_{s_j}$ has an index $1$ periodic orbit, one has that $\mu_S(S)$ does not intersect $\lambda_{s_j}$. Hence, one can find a surface $V\subset V_S$ such that $V$ contains $S$ and $\mu_S(S)$ and does not intersect with $\lambda_{s_j}$. See Figure~\ref{figure24}. Now, suppose that $S$ is contained in some neighborhood of a attracting periodic orbit $\lambda_{a_i}$, one proceeds in a similar way. For the general case, one can choose a smaller neighborhood contained in $S$ and argue as above.
\end{proof} 
Given a handlebody $M$, we consider a flow on $M$ as described above, and  the restriction of this flow to the tubular neighborhood of $\partial M$. Note that, the tubular neighborhood of $\partial M$ is homeomorphic to the collaring of the boundary, $\partial M \times I$, where $I$ is a closed interval. For this reason we refer to the tubular neighborhood of $\partial M$ as the collaring of $\partial M$.
 In $\cite{de1987smale}$ there is a description of distinct ways of gluing a round handle $R$ to a collaring of $\partial M$.

In \cite{de1987smale} three types of gluing of round handles, to a collaring of $\partial M$  are presented. See Figure~\ref{figure1}. In the next subsection we construct a special round handle that can not be realized in $\mathbb{S}^3$. Subsequently, we make use of this special round handle to construct special basic blocks which consequently are also not realizable in $\mathbb{S}^3$.
\begin{figure}[h!]
  \centering{\includegraphics[width=14cm]{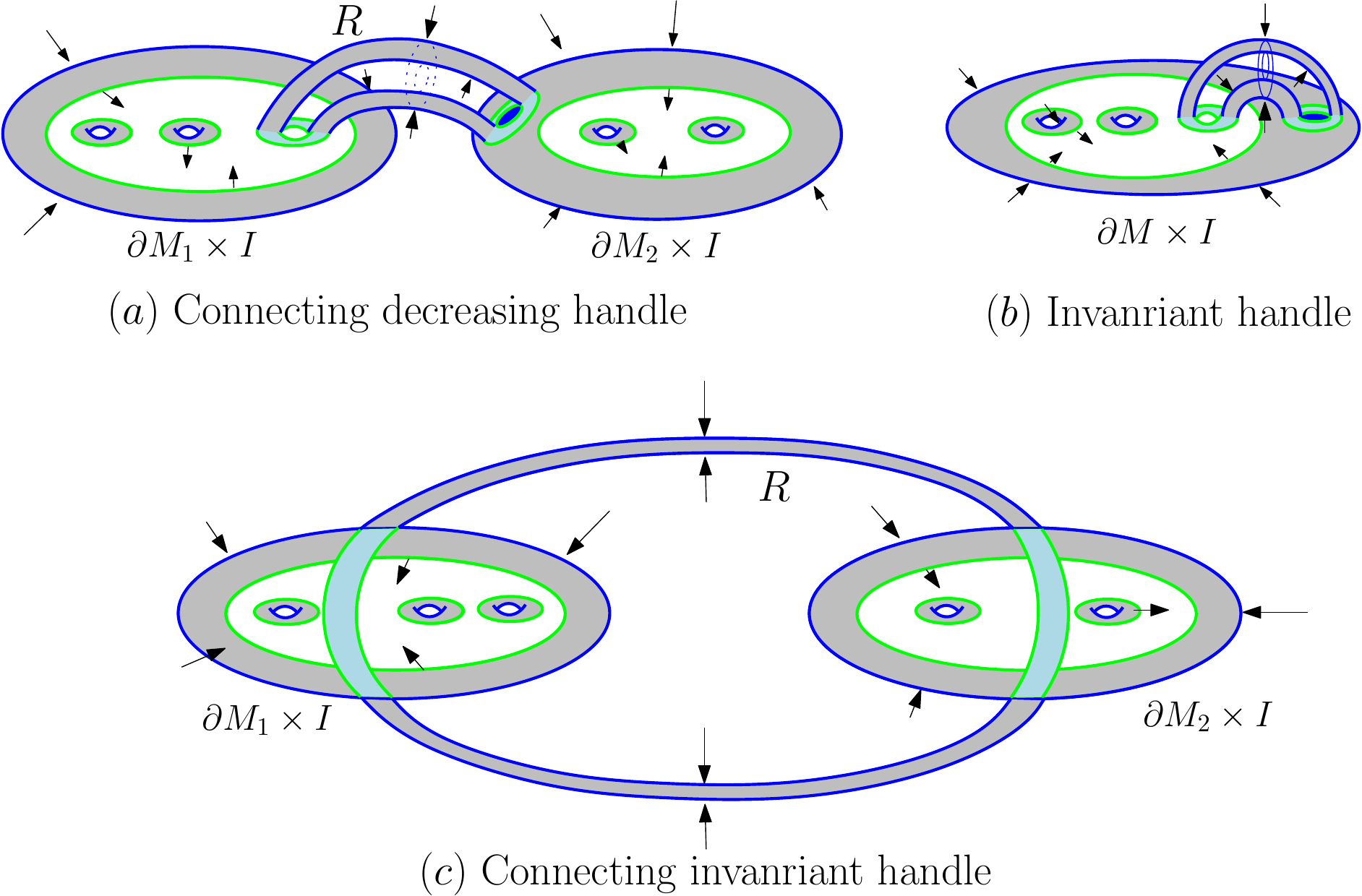}}
\caption{Distinct gluings of round handles.}\label{figure1}
\end{figure}

\subsection{Special Round Handle in $\ss$}

Due to the topology of $\ss$ there is another case to consider. In order to describe it, we build a special round handle $R_1$ in $\ss$. Start with a non separating sphere $\mathbb{S}^2$ and remove two disjoint disk $D_1$ and $D_2$. Now consider a product of $\mathbb{S}^2-(D_1\sqcup D_2)$ with an interval $I$. This manifold is a $3$-manifold, $R_1$, such that the boundary $\partial R_1$ is composed by four annuli. One can put a saddle periodic orbit inside of $R_1$, as shown in Figure~\ref{figureSRH}.
\begin{figure}[h!]
\centering{\includegraphics[width=6cm]{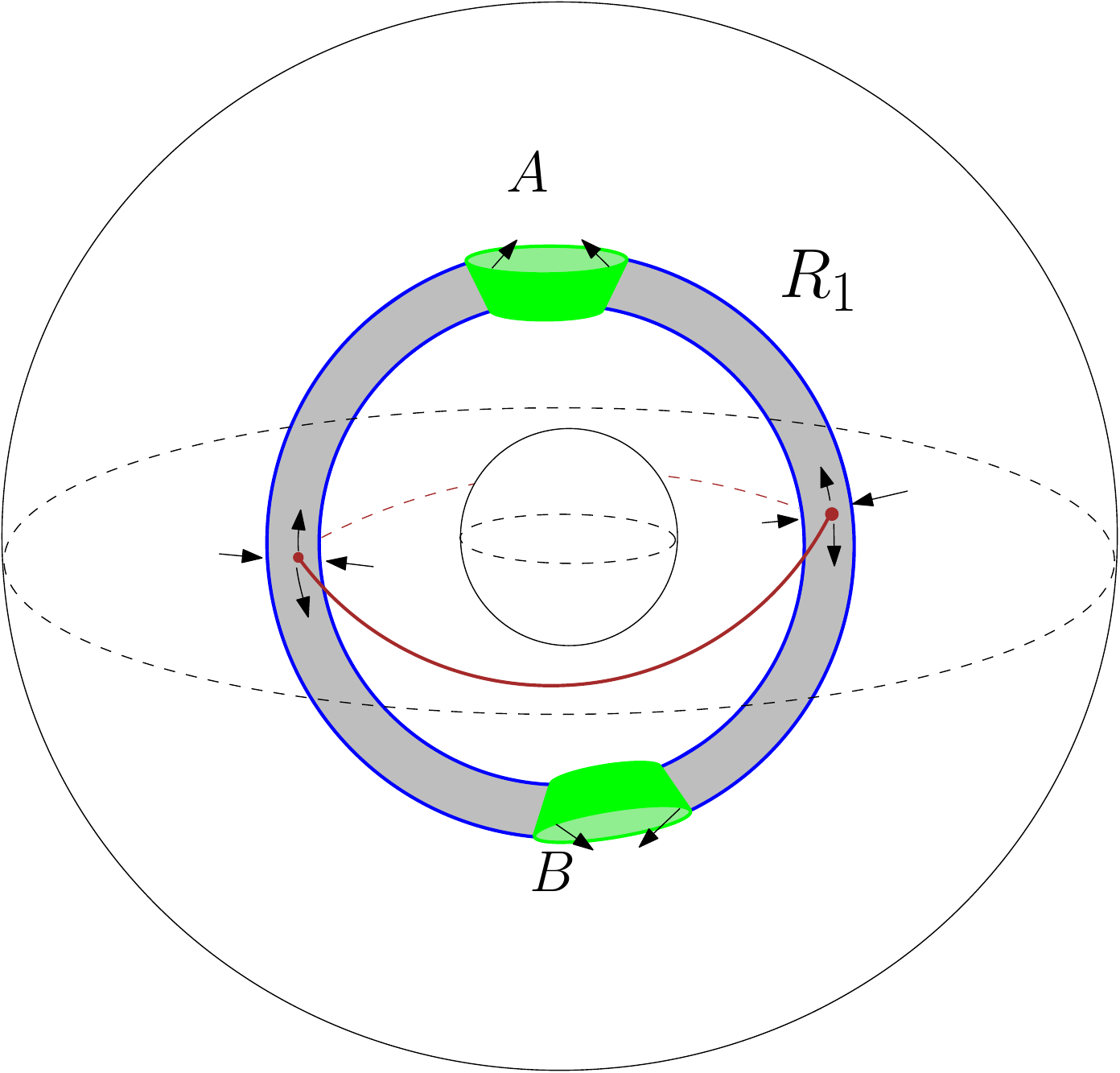}}
\caption{Special type of round handle in $\ss$.}\label{figureSRH}
\end{figure}

Let $N$ be a compact, connected  $3$-manifold with nonempty and connected boundary which is inside a $3$-ball in $\ss$. In the boundary of $N$, $\partial N$ consider two disjoint disks $D_1$ and $D_2$ and inside each one of them other smaller disks $\overline{D}_1\subset D_1$ and $\overline{D}_2\subset D_2$ such that the annuli  $A_1=\overline{D_1-\overline{D}_1}$ and $B_1=\overline{D_2-\overline{D}_2}$ have boundaries $\alpha_1'=\partial D_1$, $\alpha_2'=\partial\overline{D}_1$, $\beta_1'=\overline{D}_2$ and $\beta_2'=\partial \overline{D}_2$. A round handle $R$ is homeomorphic to $\mathbb{S}^1\times [a,b]\times [c,d]$. Consider the following circles $\beta_1=\mathbb{S}^1\times \left\{a\right\}\times \left\{c\right\}$, $\beta_2=\mathbb{S}^1\times \left\{b\right\}\times \left\{c\right\}$,
$\alpha_1=\mathbb{S}^1\times \left\{a\right\}\times \left\{d\right\}$ and $\alpha_2=\mathbb{S}^1\times \left\{b\right\}\times \left\{d\right\}$. Also consider the annuli $A=\mathbb{S}^1\times[a,b]\times \left\{d\right\}$, $B=\mathbb{S}^1\times[a,b]\times \left\{c\right\}$, $E=\mathbb{S}^1\times \left\{a\right\}\times [c,d]$ and $F=\mathbb{S}^1\times \left\{b\right\}\times [c,d]$. With this decomposition we now describe the special gluing of  $R_1$  to the $3$-manifold $N$. The annulus $A$ will be glued to $A_1$ and $B$ to $B_1$ in such a way that $\alpha_1$ is identified to $\alpha_2'$ and $\alpha_2$ is identified to $\alpha_1'$. Also, $\beta_1$ is identified to $\beta_1'$ and $\beta_2$ is identified to $\beta_2'$. See Figure~\ref{figure3}.
\begin{figure}[h!]
\centering{\includegraphics[width=9cm]{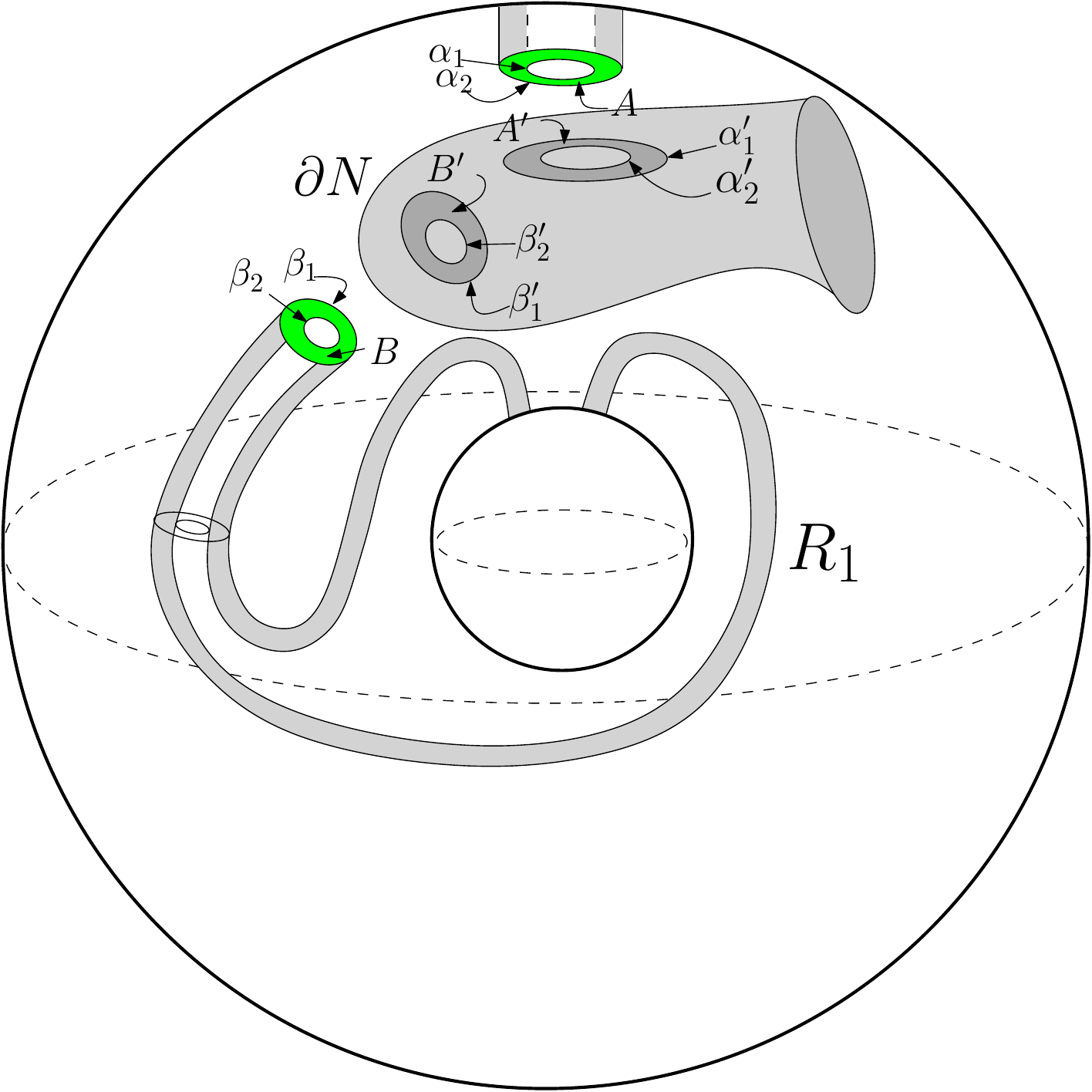}}
\caption{ Special gluing of a round handle in $\ss$.}\label{figure3}
\end{figure}

\begin{lemma}\label{le18}
With the notation above, $\partial(N\cup R_1)$ is homeomorphic to $\partial N$.
\end{lemma}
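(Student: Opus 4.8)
Lemma~\ref{le18} asserts that the special gluing of the round handle $R_1$ to the $3$-manifold $N$ does not change the boundary up to homeomorphism. My plan is to track what the gluing does to a collar of $\partial N$ and show that the four annuli involved recombine into a surface isotopic to the one we removed.

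First I would set up the bookkeeping carefully. The boundary $\partial N$ is a connected closed orientable surface. Inside it we have removed the interiors of the two annuli $A_1 = \overline{D_1 - \overline{D}_1}$ and $B_1 = \overline{D_2 - \overline{D}_2}$; equivalently, $\partial N$ decomposes as $A_1 \cup B_1 \cup P$, where $P$ is the complementary subsurface (a copy of $\partial N$ with two open annuli removed), glued along the four boundary circles $\alpha_1', \alpha_2', \beta_1', \beta_2'$. After attaching $R_1$ along $A = A_1$ and $B = B_1$, the new boundary $\partial(N \cup R_1)$ is obtained from $P$ by capping off its four boundary circles not with the annuli $A_1, B_1$ but with the two \emph{other} pairs of annuli of $\partial R_1$, namely $E = \mathbb{S}^1 \times \{a\} \times [c,d]$ and $F = \mathbb{S}^1 \times \{b\} \times [c,d]$. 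So I need to show that $P \cup E \cup F$ is homeomorphic to $P \cup A_1 \cup B_1 = \partial N$.

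The key step is to match the gluing data. From the identifications in the construction, $E$ has boundary circles $\alpha_1$ and $\beta_1$, which are identified with $\alpha_2'$ and $\beta_1'$ respectively; $F$ has boundary circles $\alpha_2$ and $\beta_2$, identified with $\alpha_1'$ and $\beta_2'$. Thus attaching $E$ glues $\alpha_2'$ (a boundary circle of $D_1$'s inner disk region) to $\beta_1'$ (a boundary circle of $D_2$'s region), and attaching $F$ glues $\alpha_1'$ to $\beta_2'$. Since an annulus glued between two circles of a surface is, up to homeomorphism, the same as the ``connect-the-two-boundary-circles'' operation (a tube), the net effect of attaching $E$ and $F$ is to tube together the four holes of $P$ in a particular pairing. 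I would then observe that the original surface $\partial N = P \cup A_1 \cup B_1$ also tubes these four holes, but in the pairing $\{\alpha_1', \alpha_2'\}$ and $\{\beta_1', \beta_2'\}$. The point is that both pairings produce homeomorphic closed surfaces: capping four boundary circles of the \emph{same} planar-boundary subsurface $P$ with two tubes always yields a surface of the same genus and the same number of components, provided the two tubing patterns are related by the obvious symmetry — and here the crucial observation is that $D_1$ and $D_2$ are disks, so the circles $\alpha_1', \alpha_2'$ (resp.\ $\beta_1', \beta_2'$) cobound annuli in $\partial N$, making all four circles null-homotopic in a neighborhood and the two tubings interchangeable by an ambient isotopy of $P$ that swaps the roles. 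Concretely, I would exhibit an explicit homeomorphism by cutting $P$ along a properly embedded arc joining the $D_1$-circles to the $D_2$-circles and re-gluing, or simply count: $P$ has Euler characteristic $\chi(\partial N) - 0$ (removing annuli doesn't change $\chi$), and capping with two annuli/tubes gives $\chi(P) = \chi(\partial(N\cup R_1))$ either way, and a connectivity check shows both results are connected, hence homeomorphic by classification of surfaces.

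The main obstacle is the orientation/pairing subtlety: I must verify that the particular identification $\alpha_1 \sim \alpha_2'$, $\alpha_2 \sim \alpha_1'$, $\beta_1 \sim \beta_1'$, $\beta_2 \sim \beta_2'$ (note the ``crossing'' in the $\alpha$'s but not the $\beta$'s) does not introduce a non-orientable piece or change the genus, and that the resulting surface is still connected rather than splitting off a component. I expect to handle this by a direct Euler-characteristic-and-connectedness argument together with the classification of compact surfaces: since $\chi$ is preserved, orientability is preserved (the gluing is along annuli, which are orientable and glued coherently because everything sits inside the orientable $\ss$), and connectedness follows because $P$ is connected and the tubes only add handles or identifications among already-connected pieces. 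Once these invariants agree, the classification theorem forces $\partial(N \cup R_1) \cong \partial N$, completing the proof.
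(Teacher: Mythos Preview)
Your overall strategy --- reduce to the classification of closed orientable surfaces by matching Euler characteristic, orientability, and connectedness --- does work, but it is considerably more roundabout than the paper's argument, and one step in your write-up is actually wrong. You assert that ``$P$ is connected and the tubes only add handles or identifications among already-connected pieces.'' But $P = \partial N \setminus \bigl(\inte A_1 \cup \inte B_1\bigr)$ is \emph{not} connected: each annulus $A_1, B_1$ sits inside a disk $D_1, D_2$, so removing it separates off the inner disk $\overline{D}_1$ (resp.\ $\overline{D}_2$). Thus $P = \overline{D}_1 \sqcup \overline{D}_2 \sqcup C$ with $C=\overline{\partial N\setminus(D_1\cup D_2)}$. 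Your connectedness check must therefore actually trace which components $E$ and $F$ join; it happens that $E$ joins $\overline{D}_1$ to $C$ and $F$ joins $C$ to $\overline{D}_2$, so the result is connected, but your stated reason is false.

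The paper exploits exactly this decomposition of $P$ to bypass the classification theorem entirely. Since $\overline{D}_1$ and $\overline{D}_2$ are disks and $E,F$ are annuli, the unions $\overline{D}_1\cup E$ and $\overline{D}_2\cup F$ are themselves disks. Hence $\partial(N\cup R_1)$ is just $C$ with two disks glued back along $\partial C=\alpha_1'\cup\beta_1'$, which is precisely how $\partial N$ is built from $C$, $D_1$, $D_2$. This gives the homeomorphism directly, with no appeal to Euler characteristic or surface classification. Your approach would still go through once the connectedness argument is repaired, but you are working much harder than necessary because you treat the four boundary circles of $P$ symmetrically instead of using that two of them bound disks in $P$.
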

\begin{proof}
We have that
$$
\partial N= \overline{D}_2\cup_{\beta_2'}B_1\cup_{\beta_1'}C\cup_{\alpha_1'}A_1\cup_{\alpha_2'}\overline{D}_1
$$
where $C=\overline{\partial N-(D_1\cup D_2)}$. On the other hand, one has 
$$
\partial(N\cup R_1)=\overline{D}_1\cup_{\alpha_1=\alpha_1'}E\cup_{\beta_1=\beta_1'}C\cup_{\alpha_2=\alpha_2'}F\cup_{\beta_2=\beta_2'}\overline{D}_2
$$
Note that both $\overline{D}_1\cup E$ and $F\cup \overline{D}_2$ are disks. Hence, $\partial N$ is homeomorphic to $\partial (N\cup R_1)$.
\end{proof}
\begin{corollary}\label{co3}
Let $N$ be a handlebody of genus $g$ embedded in a $3$-ball in $\ss$. Let 
$$
X=N\cup R_1
$$
as described above, then $\ss-X$ is homeomorphic to a handlebody of genus $g$.
\end{corollary}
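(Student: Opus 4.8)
\textbf{Proof plan for Corollary~\ref{co3}.}

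The plan is to leverage Lemma~\ref{le18} together with the well-known Heegaard-type decomposition of $\ss$. First I would recall that $\ss$ admits a genus-one Heegaard splitting, and more relevantly that if a handlebody $N$ of genus $g$ is embedded in a $3$-ball $B^3\subset\ss$, then $\ss - N$ decomposes as $(B^3 - N)\cup\overline{(\ss - B^3)}$. Since $N\subset B^3$ with $\partial N$ connected, the complement $B^3 - N$ is itself a handlebody of genus $g$ (this is the standard fact that the complement of a handlebody in $S^3$, hence in a ball, is a handlebody of the same genus, provided the embedding is the standard unknotted one implicit in the construction), and attaching $\overline{\ss - B^3}\cong B^3$ along a sphere does not change this. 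Thus $\ss - N$ is a handlebody of genus $g$; call it $W$.

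Next I would analyze the effect of adjoining the special round handle $R_1$. By construction, $X = N\cup R_1$ is obtained by gluing $R_1$ along the two annuli $A_1, B_1\subset \partial N$ as specified. By Lemma~\ref{le18}, $\partial X$ is homeomorphic to $\partial N$, i.e. a closed orientable surface of genus $g$. It remains to identify $\ss - X$. The key observation is that $\ss - X = W - R_1$, where $R_1$ now sits inside $W$ rather than being attached to its exterior; more precisely, the complement of $X$ is obtained from the complement $W$ of $N$ by removing the part of $R_1$ that protrudes, which amounts to drilling out a solid-torus-like piece, or equivalently attaching a $1$-handle and a complementary piece. The cleanest route is to observe that, because $R_1$ is built on a \emph{non-separating} sphere with two disks removed crossed with $I$, its core saddle orbit $\lambda$ is a curve whose regular neighborhood in $\ss$ is handled by Lemma~\ref{le16} and Proposition~\ref{pr1}: the non-separating nature means that passing $R_1$ through the $S^2\times S^1$ direction does not create a new boundary component but rather reconnects $W$ to itself. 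I would make this precise by writing $\ss = X \cup_{\partial X} (\ss - X)$ and computing $\partial(\ss-X) = \partial X \cong \Sigma_g$, then arguing that $\ss - X$ has $H_2 = 0$ and $\di H_1 = g$ via a Mayer-Vietoris computation with $X$ (whose homology is controlled since $N$ is a handlebody and $R_1$ contributes one generator in $H_1$ that is killed by the non-separating gluing), so that $\ss - X$ is of handlebody type; then invoke the fact that a compact $3$-manifold with connected boundary $\Sigma_g$ that is of handlebody type and is irreducible is a handlebody.

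The main obstacle I anticipate is the last step: being of handlebody type (the homological condition $H_2 = 0$, $\di H_1 = g$) does not by itself guarantee that $\ss - X$ is genuinely a handlebody — one needs a geometric/$\pi_1$ argument to upgrade the homological statement, for instance checking that $\ss-X$ is irreducible and has free fundamental group of rank $g$, or exhibiting an explicit system of $g$ meridian disks. Here I would exploit that everything happens inside (or just outside) a $3$-ball: the complement $W = \ss - N$ of the handlebody in the ball comes with an explicit handle structure dual to that of $N$, and the gluing of $R_1$ modifies this handle structure in a controlled way — it slides one handle or cancels/creates a handle pair — so that the resulting manifold is still built from $0$- and $1$-handles only, hence a handlebody. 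Making the bookkeeping of this handle manipulation rigorous, and confirming the genus count stays at $g$ (using Lemma~\ref{le18}'s identification $\partial(N\cup R_1)\cong\partial N$ to pin down the genus), is the crux of the argument; the rest is routine Mayer-Vietoris and duality as in Lemmas~\ref{le14} and~\ref{le15}.
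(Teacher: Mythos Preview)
Your proposal contains a genuine error at its very first step. You claim that $\overline{\ss-B^3}\cong B^3$, so that $\ss-N$ is a handlebody $W$ of genus $g$. This is false: in $\ss=\mathbb{S}^2\times\mathbb{S}^1$ the complement of a $3$-ball is \emph{not} a ball (it has $H_1\cong F_2$ and $H_2\cong F_2$, since it still contains a non-separating $2$-sphere). Consequently $\ss-N$ is never a handlebody when $N$ sits in a $3$-ball; indeed $H_2(\ss-N)\cong F_2$ by Lemma~\ref{le15}(3) applied to the separating surface $\partial N$. Everything you build on the identification $W=\ss-N$ therefore collapses, and the subsequent ``$\ss-X=W-R_1$'' description is not usable as stated.

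The paper's argument proceeds in the opposite direction and this is the idea you are missing: rather than starting from $\ss-N$, one identifies $X=N\cup R_1$ directly. In the genus-zero case, because $R_1$ is a thickening of a \emph{non-separating} sphere with two disks removed, $B^3\cup R_1$ is isotopic to a regular neighborhood of $\mathbb{S}^2_a\vee \mathbb{S}^1_b$, where $\mathbb{S}^2_a$ is a sphere fiber of $\ss\to\mathbb{S}^1$ and $\mathbb{S}^1_b$ is a circle fiber of $\ss\to\mathbb{S}^2$; the complement of that neighborhood in $\ss$ is a $3$-ball. The point is that $R_1$ absorbs the non-separating sphere responsible for $H_2(\ss)$, which is exactly why $\ss-X$ can be a handlebody even though $\ss-N$ cannot. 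The higher-genus case then follows by noting that each $1$-handle added to $N$ (inside the ball) adds a $1$-handle to the complement. Your homological/Mayer--Vietoris plan could confirm that $\ss-X$ is of handlebody \emph{type}, but --- as you yourself flag --- upgrading this to an actual handlebody requires a concrete geometric identification, and the paper's explicit $\mathbb{S}^2_a\vee\mathbb{S}^1_b$ picture is what supplies it.
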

\begin{proof}
Recalling the construction of $R_1$, we have that
$$
R_1=\overline{\mathbb{S}^2-(D^2_1\cup D^2_2)}\times I
$$
where $\mathbb{S}^2$ is a non separating sphere. The annuli $\partial D^2_1\times I$ and $\partial D^2_2\times I$ are the gluing regions. For the case of a genus zero handlebody, a $3$-disk should be glued to the above annuli using two $2$-disks on the boundary of the $3$-disk, as shown in the Figure~\ref{figure2}.

\begin{figure}[h!]
\centering{\includegraphics[width=7cm]{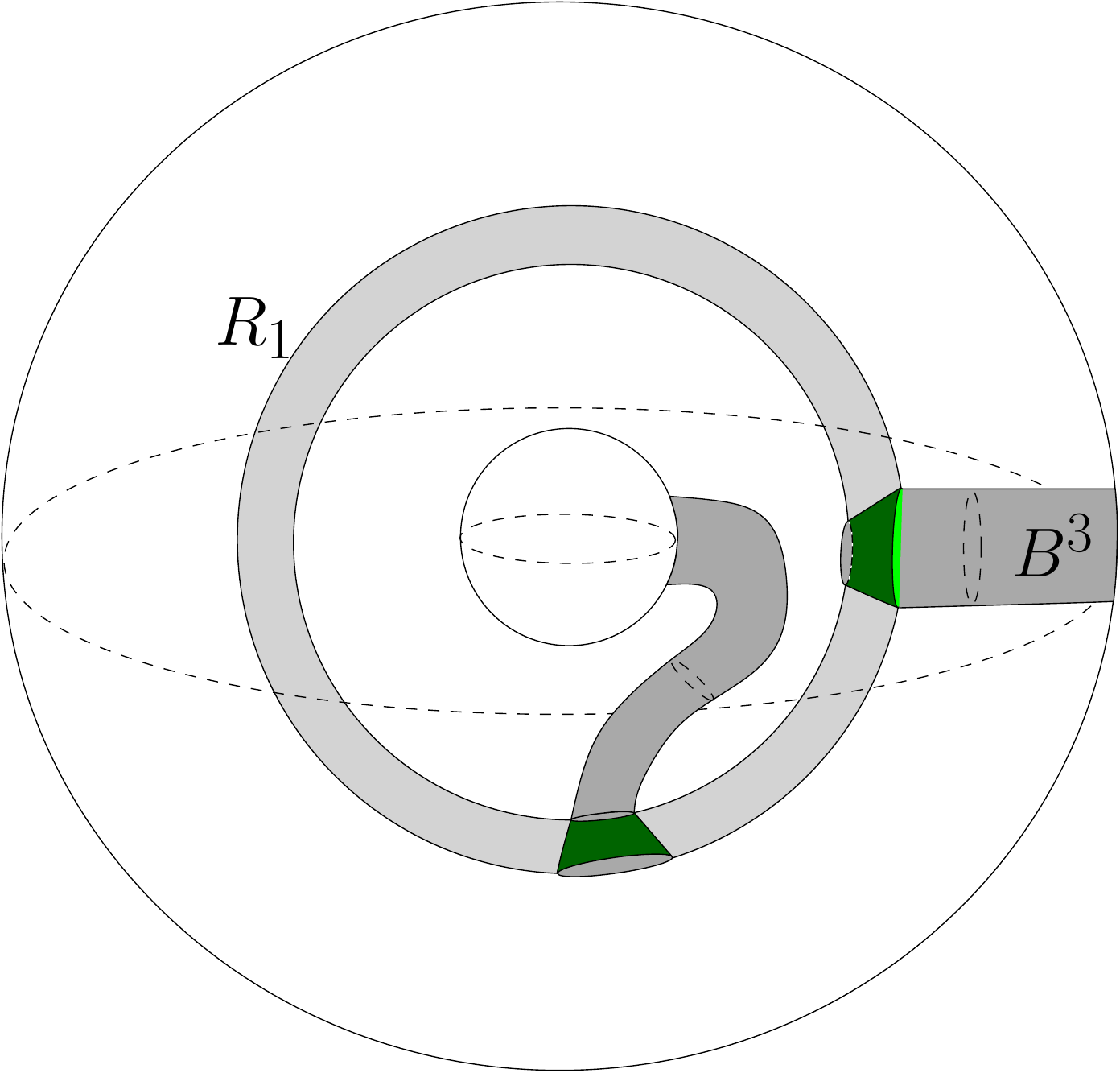}}
\caption{The special round handle $R_1$ glues to $B^3$.}\label{figure2}
\end{figure}
Note that the final manifold is isotopic to a tubular neighborhood of $\mathbb{S}^2_a\vee \mathbb{S}^1_b$, where $\mathbb{S}^2_a$ is a fiber of $\ss\rightarrow \mathbb{S}^1$ and $\mathbb{S}^1_b$ is a fiber of $\ss\rightarrow \mathbb{S}^2$. Thus, the complement in $\ss$ is homeomorphic to $B^3$.\\
In the general case of a handlebody of any genus, it is enough to observe that 
when we glue a handle in $B^3$, we increase the genus of the complement. This completes the proof.
\end{proof}

\subsection{Construction of Special Basic Block in $\ss$}

Given $A_{m\times m}$, a non-negative irreducible integer matrix which is not a permutation matrix, our aim is to construct a block for the suspension of $\sigma(A)$ so that it verifies the conditions in Proposition~\ref{le7}.

Now suppose that $A_{m\times  m}$ is a matrix with $k$ ones on the diagonal and all other entries equal to zero. Then one has the following proposition.
\begin{proposition}\label{pr9}
Let $v$ be a vertex of $L$ labelled with the suspension of a subshift of finite type 
$\sigma(A)$, where $A_{m\times m}$ is a matrix with $k$ ones on the diagonal and other entries equal to zero. Let $e^+$ and $e^-$ be the indegree and outdegree of $v$ and $\left\{g^+_i\right\}$ and $\left\{g^-_j\right\}$ are the weights on the incoming and outgoing edges incident to $v$ respectively. Suppose that
\begin{center}
$k-\sum g^-_i= e^+\quad\text{and}$\\
$k-\sum g^+_i= e^-$
\end{center}
Then, there exists a basic block $X$ in $\ss$ for the suspension of $\sigma(A)$ with $e^+\,(e^-)$ entering (exiting) boundary components each being a $2$-manifold of genus $g^+_i\,(g^-_j)$. Furthermore, $\ss-X$ is homeomorphic to $H^{g^+_1}_1\sqcup\dots\sqcup H^{g^+_{e^+}}_{e^+}\sqcup H^{g^-_1}_1\sqcup\dots\sqcup H^{g^-_{e^-}}_{e^-}$. 
\end{proposition}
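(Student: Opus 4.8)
The plan is to build $X$ explicitly by hand, starting from a model block for the suspension of $\sigma(A)$ and then correcting its complement to the prescribed handlebodies using the special round handle $R_1$ constructed in Lemma~\ref{le18} and Corollary~\ref{co3}. Since $A$ is a diagonal matrix with $k$ ones, the suspension $\sigma(A)$ is a disjoint union of $k$ saddle periodic orbits (plus $m-k$ ``trivial'' coordinates that contribute nothing dynamically), so a first model for $X$ is obtained by taking $k$ round handles $R$ glued to a collaring of a boundary surface, exactly as in the constructions recalled from \cite{de1987smale}. First I would record the count: each round handle, when glued as one of the standard gluings in Figure~\ref{figure1}, changes the indegree, the outdegree and the genera of the complementary pieces in a controlled way, and the Poincar\'e--Hopf bookkeeping (\ref{MainTheorem-eq4}) together with the result (9.11) of \cite{franks1982} (which says $\dim H_2(X,Z)=k$) forces the relations $e^+ + G^- = e^- + G^+$ and the inequalities of Proposition~\ref{le7}. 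The hypothesis $k - \sum g^-_i = e^+$ and $k - \sum g^+_i = e^-$ says we are at the \emph{extreme} end of those inequalities, which is precisely the case that cannot be realized inside a ball (it is the $\beta(L)=0$ equality case of the main theorem, item 3(b)); realizing it requires using the ambient topology of $\ss$.

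Next I would make the genera come out right. Starting from a handlebody carrying the flow $\rho_t$ of the construction preceding Proposition~\ref{pr10} (a handlebody $H_g$ built from solid tori and round handles, with flow transverse and inward on the boundary), I can arrange $e^+$ incoming boundary components of genera $g^+_1,\dots,g^+_{e^+}$ and $e^-$ outgoing ones of genera $g^-_1,\dots,g^-_{e^-}$ by attaching the appropriate number of handles to the collaring of each boundary piece; Proposition~\ref{pr10} guarantees the first-return map is smooth on the transverse strips $S$, so the result is a genuine basic block in the sense of the definition in Section~\ref{sec-suff}. The complement in $\ss$ of this ``ball-type'' model would be a disjoint union of handlebodies of the right genera \emph{except} that the counting is off by one round handle's worth — equivalently, the model sits in a ball and realizes the inequality $k+1-G^- \le e^+$ rather than the equality $k - G^- = e^+$.

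The key step is then to absorb that discrepancy with one special round handle $R_1$. I would replace one ordinary round handle in the model by the special round handle $R_1$ of Figure~\ref{figureSRH}, glued to the rest of $X$ along the two annuli $A\mapsto A_1$, $B\mapsto B_1$ exactly as in Figure~\ref{figure3}. By Lemma~\ref{le18} this gluing does not change the homeomorphism type of the relevant boundary surface, so the entering/exiting boundary components of $X$ keep their prescribed genera $g^+_i, g^-_j$; and by Corollary~\ref{co3} (applied to each complementary handlebody into which the gluing region protrudes) the complement $\ss - X$ is a disjoint union of handlebodies of genera $g^+_1,\dots,g^+_{e^+},g^-_1,\dots,g^-_{e^-}$, i.e. exactly $H^{g^+_1}_1\sqcup\dots\sqcup H^{g^+_{e^+}}_{e^+}\sqcup H^{g^-_1}_1\sqcup\dots\sqcup H^{g^-_{e^-}}_{e^-}$. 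Using the special round handle instead of an ordinary one shifts the homological count of $H_2(X)$ by exactly one (this is where the non-separating sphere $\mathbb S^2$ inside $R_1$ enters), so the final block satisfies $k - G^- = e^+$ and $k - G^+ = e^-$ rather than the strict-ball inequalities; one checks (\ref{MainTheorem-eq4}) holds since attaching $R_1$ inserts exactly one saddle periodic orbit of period one, which has zero contribution to Poincar\'e--Hopf.

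I expect the main obstacle to be the verification that the gluing of $R_1$ interacts correctly with the rest of $X$ \emph{as a flow}, not merely as a topological gluing: one must check that the flow on $R_1$ (entering two of its boundary annuli, exiting the other two) matches up along $A_1, B_1$ with the transverse inward/outward flow on the collarings coming from $\rho_t$, and that no extra chain recurrence is created, so that $X$ is indeed a basic block whose maximal invariant set is precisely the suspension of $\sigma(A)$ — this requires carefully tracking orientations in Figure~\ref{figure3} and invoking the smoothness of the first return map from Proposition~\ref{pr10} on each transverse strip. The homological counts (the $H_2(X)$ shift, $\dim H_1$ of the complementary pieces) are then routine Mayer--Vietoris computations along the lines of the proofs of Lemmas~\ref{le14} and~\ref{le15}.
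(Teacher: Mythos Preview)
Your approach is essentially the paper's: assemble the block from round handles, with exactly one of them being the special $R_1$, and invoke Lemma~\ref{le18} and Corollary~\ref{co3} to control the boundary and the complement in $\ss$. Two points where the paper is sharper than your sketch:

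First, the bookkeeping. Rather than building a ``ball-type model'' with $k$ ordinary round handles and then \emph{replacing} one by $R_1$, the paper rewrites the hypothesis as $(k-1)+1-G^{-}=e^{+}$ and $(k-1)+1-G^{+}=e^{-}$, which is precisely the equality case of Theorem~\ref{teorem1}. It then quotes Proposition~8.2 of \cite{de1987smale} to produce, directly in $\mathbb S^3$, a block $X_1$ with $k-1$ saddle periodic orbits, the \emph{already correct} boundary data $(e^{+},e^{-},g^{+}_i,g^{-}_j)$, and complement a disjoint union of handlebodies of the required genera. After embedding $X_1$ in a ball in $\ss$, it \emph{adds} $R_1$ to obtain $X_2=X_1\cup R_1$; Lemma~\ref{le18} says the boundary is unchanged and Corollary~\ref{co3} says the complement is still the same union of handlebodies. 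Your intermediate ``$k$ ordinary round handles in a ball with the target boundary'' does not actually exist (that is exactly the obstruction), so the paper's order---first $k-1$ in $\mathbb S^3$, then add $R_1$---avoids the muddle.

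Second, you wave off the $m-k$ zero diagonal entries as contributing nothing dynamically, but the paper explicitly attaches $m-k$ nilpotent handles to $X_2$ so that the resulting $X$ is a basic block for $\sigma(A)$ as an $m\times m$ matrix, not just for the $k\times k$ identity; these handles do not alter $\partial X_2$ or the complement, so the conclusion stands. The flow-compatibility and Mayer--Vietoris checks you flag as the ``main obstacle'' are not carried out in the paper's proof of this proposition; they are absorbed into the cited $\mathbb S^3$ construction and the explicit description of $R_1$.
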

\begin{proof}The dynamics associated to the suspension of $\sigma(A)$ is 
simple. It consists of a basic set with $k$ periodic orbits. One can rewrite the hypothesis
\begin{center}
$(k-1)+1-\sum g^-_i= e^+$,\\
$(k-1)+1-\sum g^+_i= e^-$.
\end{center}

By Proposition $8.2$ in $\cite{de1987smale}$ there exists a way of gluing $k-1$ round handles to $e^-$ collarings of $\partial M_1\dots,\partial M_{e^-}$, where each handlebody $M_i$ has genus $g^-_i$, such that the resulting $3-$manifold $X_1$ is a basic block which contains $k-1$ periodic orbits. Also, $\partial X^+_1(\partial X^-_1)$ is composed by $e^+(e^-)$ closed surfaces of genus $g^+_i(g^-_j)$ respectively and the complement in $\mathbb{S}^3$ is homeomorphic to $H^{g^+_1}_1\sqcup\dots\sqcup H^{g^+_{e^+}}_{e^+}\sqcup H^{g^-_1}_1\sqcup\dots\sqcup H^{g^-_{e^-}}_{e^-}$. Recall that, since $X_1$ is a compact $3$-manifold in $\mathbb{S}^3$, then one can embed $X_1$ in $\ss$. For simplicity we continue to denote this embedding by $X_1$. Now, glue $X_1$ to $R_1$ in order to construct a new basic block $X_2=X_1\cup R_1$ with $k$ periodic orbits. By Lemma~\ref{le18} and Corollary~\ref{co3}, $X_2$ verifies $\partial X_2\approx\partial X^+_1\cup\partial X^-_1$ and $\ss - X_2$ is homeomorphic to $H^{g^+_1}_1\sqcup\dots\sqcup H^{g^+_{e^+}}_{e^+}\sqcup H^{g^-_1}_1\sqcup\dots\sqcup H^{g^-_{e^-}}_{e^-}$. At this point,  a basic set with $k$ periodic orbits corresponding to the $k$ ones on the diagonal of $A$ has been constructed. Also, $m-k$ nilpotent handles must be glued to $X_2$. Recall that these handles are homeomorphic to $D^1\times D^1\times D^1$ and contain no recurrent points. These handles will correspond to the $m-k$ zeros on the diagonal of $A$. Finally, denote $X$ as $X_2$ glued with $m-k$ nilpotent handles. Recall that these nilpotent handles do not modify $\partial X_2$. 
    
\end{proof}
For the general case, we can construct a basic block for the suspension of $\sigma(N)$ where $N$ is a matrix that is flow equivalent to $A$. By definition, this implies that the suspensions of $\sigma(A)$ and $\sigma(N)$ are topologically equivalent. An essential proposition at this point is due to Franks~\cite{franks1985nonsingular}.
\begin{proposition}\label{pr8}
If $A$ is a non-negative irreducible integer matrix which is not a permutation matrix then given an integer $M>0$ there is a matrix which is flow equivalent to $N$ and which has every entry greater than $M$ and non-diagonal entries even. The size of $N$ depends only on $A$ and not on $M$.
\end{proposition}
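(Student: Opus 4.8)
This is Franks' lemma, and I would recover its proof by manipulating $A$ through operations that preserve flow equivalence of the associated subshift of finite type. Recall that flow equivalence is generated by strong shift equivalence — realised on the defining graph by in- and out-splittings and their inverse amalgamations (Williams) — together with the Parry--Sullivan expansion (inserting one extra vertex in the interior of an edge) and its inverse. Each of these is an explicit operation on a non-negative integer matrix, so any finite sequence of them applied to $A$ yields a matrix flow equivalent to $A$; the plan is to choose such a sequence landing on a matrix $N$ with the required properties.

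\textbf{Step 1: make all entries positive.} Since $A$ is irreducible and not a permutation matrix, its graph is strongly connected and carries a vertex of out-degree at least two (when $A$ is $1\times 1$, one Parry--Sullivan expansion first reduces to the $2\times 2$ case). A finite sequence of state splittings and amalgamations then reroutes edges so that every entry — diagonal entries included — of the resulting matrix $A_1$ is $\geq 1$; equivalently, one may invoke that an irreducible non-permutation matrix is flow equivalent to a primitive matrix and pass to a positive power of such a representative. \textbf{Step 2: inflate the entries without enlarging the matrix.} This is the heart of the argument. Subdividing edges is useless here, since each subdivision increases the size by one and $\sim M$ of them would be needed. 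Instead I would use the operation on $L=I-A_1$ of adding a non-negative multiple of one column of $L$ to another column (and its row analogue): these are Franks' positive-equivalence moves, they preserve flow equivalence, and, crucially, they leave the size of the matrix unchanged. A one-line computation shows that adding column $j$ of $I-A_1$ to column $i$ replaces each entry $a_{ki}$ with essentially $a_{ki}+a_{kj}$; starting from a matrix all of whose entries are $\geq 1$ and iterating such additions over all columns drives every entry past $M$ in finitely many steps, the number of rows and columns never changing. Hence the size of the final matrix is that of $A_1$, which depends only on $A$.

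\textbf{Step 3: make the off-diagonal entries even.} Adding one column of $I-A$ to another an \emph{even} number of times changes each off-diagonal entry of that column by an even amount, so such doubled moves preserve the parity pattern of the matrix. One therefore first arranges, by a finite bookkeeping of which columns are added and how often (or by interposing an expansion that creates two parallel edges, whose subsequent amalgamation contributes an even increment), that all off-diagonal entries are even, and then performs Step 2 using only doubled column additions, which both inflates all entries past $M$ and preserves evenness of the off-diagonal part. The matrix $N$ reached at the end is flow equivalent to $A$, has every entry $>M$ and every off-diagonal entry even, and has size depending only on $A$.

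The main obstacle is Step 2: one must use moves that inflate entries while both preserving flow equivalence \emph{and} fixing the matrix size, which is exactly why one works with Franks' positive-equivalence moves on $I-A$ rather than with edge subdivisions; the parity adjustment of Step 3 is then a secondary, purely combinatorial nuisance. An alternative that avoids the move calculus is to invoke the Bowen--Franks classification, by which the flow equivalence class of an irreducible non-permutation matrix is determined by the pair consisting of the cokernel $\mathbb{Z}^{n}/(I-A)\mathbb{Z}^{n}$ and the sign of $\det(I-A)$: it then suffices to build $N$ directly from the Smith normal form $D$ of $I-A$ by writing $I-N=UDV$ with unimodular $U,V\equiv I\pmod{2}$ chosen large enough that all off-diagonal entries of $N$ exceed $M$, with the size of $N$ dictated by the free rank and invariant factors of the cokernel — hence by $A$ alone — and with the sign of $\det(I-N)$ corrected by a unimodular factor congruent to the identity modulo $2$; checking that this $N$ is non-negative, irreducible, not a permutation matrix, and has the prescribed parity is then a finite linear-algebra verification.
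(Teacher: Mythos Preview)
The paper does not prove this proposition: it is quoted verbatim as a result of Franks \cite{franks1985nonsingular} and used as a black box. So there is no proof in the paper to compare your attempt to; the paper's ``proof'' is the citation.

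Your sketch is in the right spirit --- this is indeed how Franks proceeds, via flow-equivalence moves that inflate entries without changing the size --- but several steps are looser than they should be. In Step~2 you assert that adding column~$j$ of $I-A$ to column~$i$ sends $a_{ki}$ to ``essentially $a_{ki}+a_{kj}$''; this is not quite the effect on the diagonal entries (where the $1$'s in $I-A$ intervene), and one must check that the resulting $A$ stays non-negative, which is exactly the content of Franks' \emph{positive} equivalence restriction. You also need to verify that these particular column operations on $I-A$ genuinely correspond to flow-equivalence moves on the subshift, not merely to integral equivalence of the matrix $I-A$; Franks does this, but it is not a triviality and your sketch takes it for granted. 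Step~3 is the weakest part: ``a finite bookkeeping of which columns are added and how often'' is not a proof that the off-diagonal parities can all be killed while keeping entries non-negative, and the parenthetical alternative via ``an expansion that creates two parallel edges'' would change the size, defeating the point.

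Your alternative via the Bowen--Franks classification is closer to what Franks actually does and is the cleaner route: once you know flow equivalence is determined by $\mathrm{coker}(I-A)$ and $\mathrm{sgn}\det(I-A)$, you can simply \emph{exhibit} a matrix $N$ with the same invariants and the desired size, positivity and parity, rather than navigate a chain of moves. If you want a self-contained argument, I would recommend fleshing out that route and checking carefully that your constructed $N$ is irreducible and not a permutation (so that the classification applies to it).
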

By Proposition~\ref{pr8}, the matrix $A_{m\times m}$ is flow equivalent to some matrix $N_{n\times n}$ which has all non-diagonal entries even, 
\begin{center}
$\dim \ke\left( (I-\overline{N}:F^n_2\rightarrow F^n_2)\right)=\dim \ke\left( (I-\overline{A}:F^n_2\rightarrow F^n_2)\right)=k,$
\end{center}
where $\overline{N}$ denotes the $\mo 2$ reduction of $N$. By Proposition~\ref{pr9} we are able to construct a basic block $X$ for the suspension of $\sigma(\overline{N})$. Our goal is to construct a basic block for the suspension of $\sigma(N)$. In order to achieve this, we wish to maintain the block $X$ while modifying the flow within it. We will use the notion of one-handles $H_i$ within round handles $R_i$ and nilpotent handles $N_t$. 

Let $x$ be a point of $D^1\times p\subset H_i$ where $p\in D^1$. The interval $D^1\times p$ will be denoted by $W^u_i(x)$. Similarly, let $x$ be a point of $q\times D^1$. The interval $q\times D^1$ will be denoted by $W^s_i(x)$. The reason for introducing one-handles is that if $H_i$ intersects $m$ times $\mu(H_j)$  then the $ij$th entry of $\overline{N}$ increases by $m$, where $\mu(x)$ is the first return map for the one-handle of the round handle. 

This is precisely what we want to do, namely, increase the entries of $\overline{N}$ to obtain $N$. For this reason we will construct a connected surface $U$, such that, $U$ contains all one handles and the flow is transversal to $U$.

For each one handle $H_i$, we consider two disjoint subsets $E^i_1$ and $E^i_2$ contained in $H_i$ as shown in Figure~\ref{figure18}, homeomorphic to $D^1\times D^1$ such that 
$$
\mu(E^i_1\sqcup E^i_2)\cap H_i=\emptyset,
$$
we will call $E^i_1$ and $E^i_2$ the \textbf{\textit{set of ends}} of $H_i$. See Figure~\ref{figure18}.
\begin{figure}[h!]
\centering{\includegraphics[width=8cm]{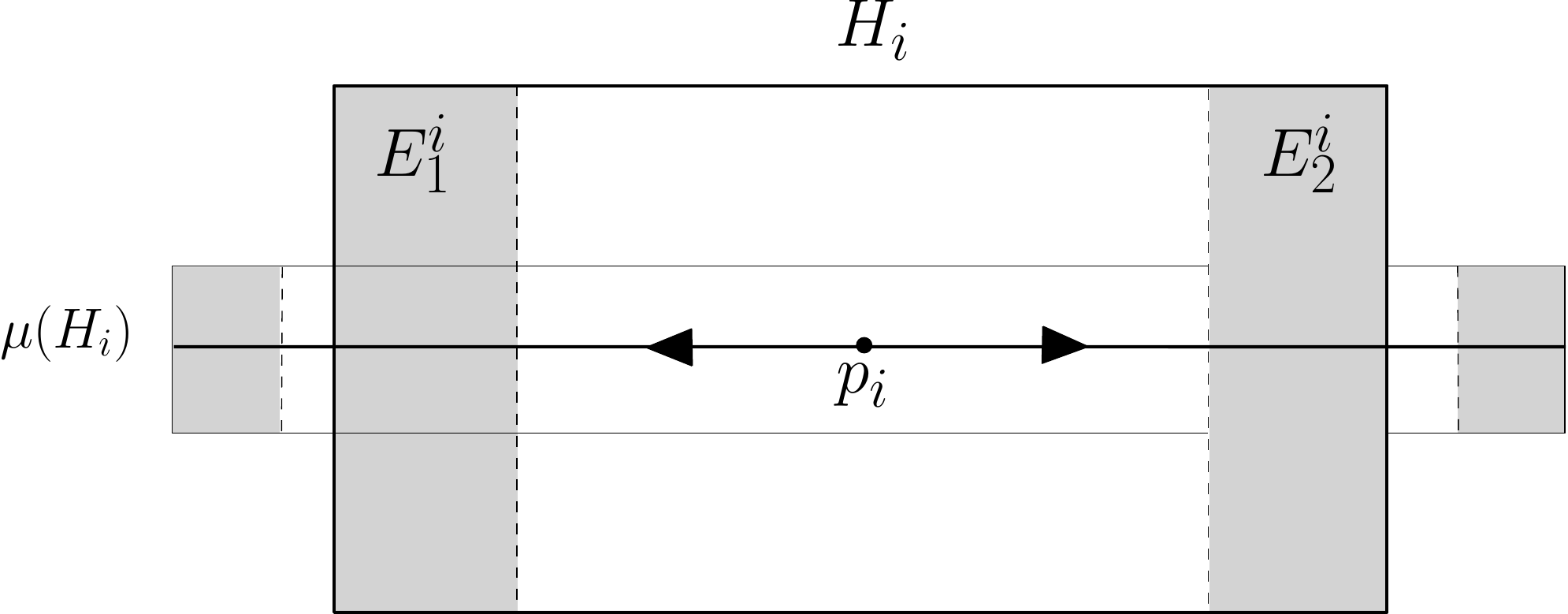}}
\caption{$p_i$ is the center of the one handle $H_i$}\label{figure18}
\end{figure}

Thus, our aim is the construction of a surface $U\subset X$ such that $U$ contains all one handles $H_i$ and all nilpotent handles $N_j$. 

On the other hand, in order to connect the one handles we consider disjoint rectangles embedded in the union of collaring of $\partial M_i$ for $i\in\left\{1,\dots,e^-\right\}$ and consider the flow transversal to the collaring as described in Proposition~\ref{pr10}.
We will use these rectangles to connect the ends of the one handles associated to the collaring. 

First fix a collaring of a handlebody $\partial M_i\times I$ for $i\in \left\{1,\dots,e^-\right\}$. Now consider the round handles $R_{j_1},\dots,R_{j_l},\dots,R_{j_{n(i)}}$ which glue to $\partial M_i\times I$. Without loss of generality, we can suppose that $R_{j_1},\dots,R_{j_l}$ are round handles, that have only one annulus which glues to $\partial M_i\times I$ and the remaining ones are invariant round handles with two annuli which glue to  $\partial M_i\times I$. The special round handle $R_1$ shall be considered as if $R_1$ has one annulus glued to $\partial M_1\times I$. Now, we connect the ends of ones handles $H_{j_1},\dots,H_{j_l}$ with a rectangle embedded in the collaring of $\partial M_i$ as described above. Thus, we can find a connected surface $U^i_1$ such that, $U^i_1$ contains all one handles $H_{j_1},\dots,H_{j_l}$ of the round handles $R_{j_1},\dots,R_{j_l}$ glued to $\partial M_i\times I$. For the invariant round handles, we glue a rectangle to $E^{j_{l+1}}_1$ and another rectangle to $E^{j_{n(i)}}_2$. Now, we connect $H_{j_{l+1}}$ with $H_{j_{l+2}}$ by gluing a rectangle joining $E^{j_{l+1}}_2$ and $E^{j_{l+2}}_1$. We do this successively until the last pair $H_{j_{n(i)-1}}$ and $H_{j_{n(i)}}$ have been joined. Thus, we can find a connected surface $U^i_2$, such that $U^i_2$ contains all one handles $H_{j_{l+1}},\dots,H_{j_{n(i)}}$. Now we connect $U^i_1$ and $U^i_2$ with a rectangle to obtain a connected surface $U^i$. Note that the flow is transversal to $U^i$.

In this fashion, we construct a collection of $\mathcal{U}=\left\{\,U_i\,|\,\, 1\leq i\leq e^-\right\}$. Note that, $U^i\cap U^j$ is composed by the one handles that connect the collaring of $\partial M_i$ with the collaring of $\partial M_j$. Define
$$
U_1=\bigcup^{e^-}_{i=1}U^i.
$$
This surface $U_1$ contains all one handles associated to $R_1,\dots,R_k$. Extend $U_1$ by a rectangle $\hat{R}$, which contains all nilpotent one handles. See Figure~\ref{figure22}.
\begin{figure}[h!]
\centering{\includegraphics[width=8cm]{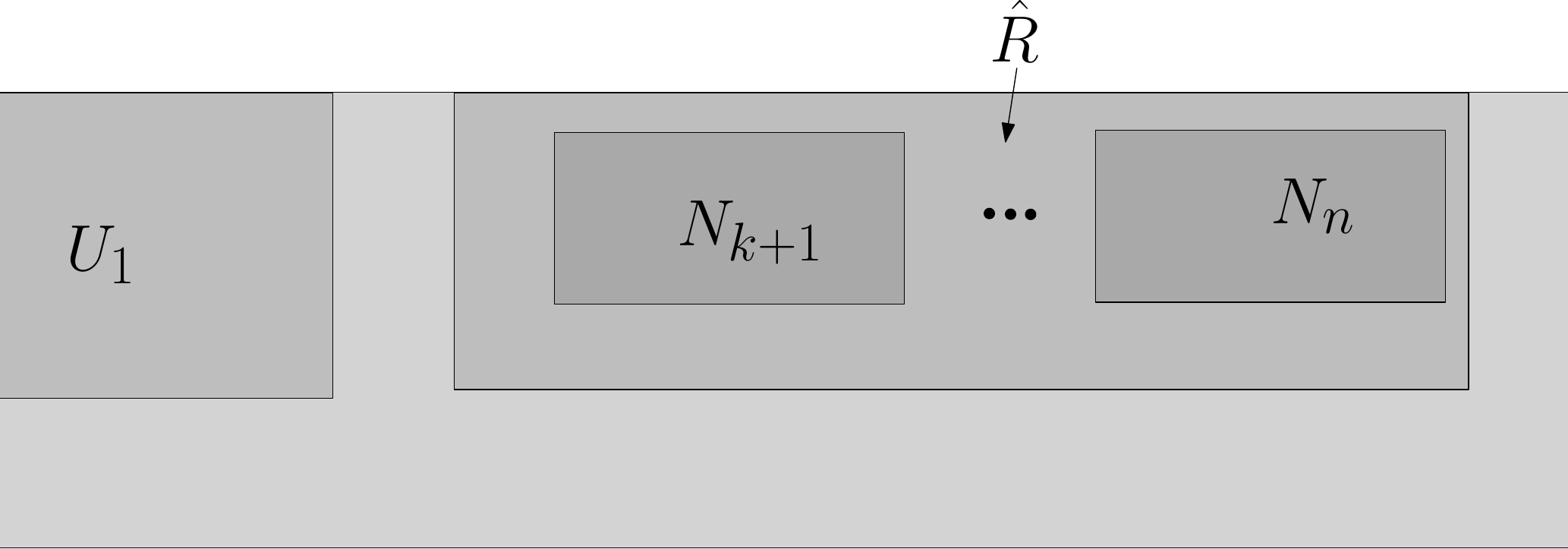}}
\caption{$\hat{R}$ contains all nilpotent one handles.}\label{figure22}
\end{figure}

Thus, by Proposition~\ref{pr10} one can find a surface $U$ that contains $U_1$ and $\hat{R}$. Actually, one finds a surface $V$, such that $U\subset V$  and the first return map $\mu:U\rightarrow V$ is smooth. By construction, one has that
$$
\overline{U}=(U-\mu(U))\cup \bigcup^{e^-}_{i=1}\mu(E^i_1\sqcup E^i_2)
$$ 
is connected. 
\begin{lemma}\label{le17}
Let $U_0$ be a small neighborhood of $U$ in $V$ as defined above. There exists an isotopy $\Psi_t:V\rightarrow V$ supported on the interior of $U_0$ such that $\Psi_0$ is the identity. Also, $\mu_1=\Psi_1\circ \mu:U\rightarrow V$ satisfies that its suspension flow with induced flow $\varphi_t$ has a chain recurrent set which is topologically equivalent to the suspension of a subshift of finite type with matrix $A_{m\times m}$. 
\end{lemma}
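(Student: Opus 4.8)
The plan is to realize the matrix $A_{m\times m}$ by a controlled perturbation of the first return map $\mu:U\to V$, keeping the block $X$ fixed and only altering the dynamics inside a thin neighborhood $U_0$ of $U$. Recall that the block $X$ constructed via Proposition~\ref{pr9} already carries a suspension flow whose return map $\mu$ realizes the reduced matrix $\overline{N}$: each one-handle $H_i$ (inside a round handle $R_i$, or a nilpotent handle $N_t$) meets the image $\mu(H_j)$ a number of times equal to the $(i,j)$ entry of $\overline N$. Since $N$ has non-diagonal entries of the same parity as $\overline N$ but possibly much larger, the job is precisely to \emph{add} to $\mu$ extra transverse intersections of $W^u_i$-strips with $W^s_j$-strips so as to raise the off-diagonal entry $(i,j)$ of the incidence matrix from its current value $(\overline N)_{ij}$ up to $N_{ij}$ (an increment which is even, hence compatible with working mod $2$), while leaving the diagonal entries — i.e. the number of periodic orbits $k$ — untouched.

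The key steps, in order, are: (1) Fix the small neighborhood $U_0$ of $U$ in $V$ on which the perturbation will be supported; since $\overline U=(U-\mu(U))\cup\bigcup_{i}\mu(E^i_1\sqcup E^i_2)$ is connected, $U$ is a connected surface transverse to the flow, so we may isotope freely within $U_0$ while staying transverse to $\varphi_t$. (2) For each ordered pair $(i,j)$ with $i\ne j$, push a finger of the strip $\mu(E^i_\ast)$ along $U$ (through the rectangles joining the ends of the one-handles, which is exactly why $U$ was built connected) so that it wraps $N_{ij}-(\overline N)_{ij}$ additional times across $H_j$; each such finger can be taken to be a product ``Smale horseshoe'' fold, creating the required number of new transverse intersection points of $W^u_i(x)$ with $W^s_j(y)$ and no new recurrence beyond the already-present periodic orbits. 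This is the standard trick (cf.\ Franks~\cite{franks1985nonsingular}, Proposition~\ref{pr8}) of increasing entries of a structure matrix within a fixed handle set. (3) Assemble these finger-moves into a single ambient isotopy $\Psi_t:V\to V$ with $\Psi_0=\mathrm{id}$, supported in $\inte U_0$, and set $\mu_1=\Psi_1\circ\mu$. (4) Verify that the suspension of $\mu_1$, with induced flow $\varphi_t$ on $X$, has chain recurrent set exactly the $k$ periodic orbits together with the transition structure now encoded by $A_{m\times m}$: the handle set $H$ still meets every orbit of the maximal invariant set and every $h_i\subset H$ still meets it, the ``escape'' condition (orbits leaving $H$ never return) is preserved because the perturbation is $C^0$-small and supported away from $\partial X$, and hence the maximal invariant set of $X$ is a basic set topologically equivalent to the suspension of the subshift of finite type with matrix $A$ (equivalently $N$, which is flow equivalent to $A$).

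The main obstacle I expect is step (2)–(3): arranging the finger-moves so that they \emph{simultaneously} achieve all the prescribed off-diagonal counts without creating spurious intersections among strips of other index pairs, and so that the composite can genuinely be written as an ambient isotopy of $V$ supported in $\inte U_0$ rather than merely a sequence of surgeries on $\mu$. The connectedness of $U$ (and the explicit rectangle-by-rectangle construction of $U^i$ and of $U_1\cup\hat R$) is what makes this feasible — each finger can be guided along $U$ to the correct one-handle — but one must check that the fingers for distinct pairs $(i,j)$ can be routed through disjoint sub-rectangles and that the resulting intersection pattern is exactly the entrywise difference $N-\overline N$, with the diagonal (hence the periodic-orbit count $k$ and the index of the basic set) unchanged. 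Once the bookkeeping of intersection numbers is done, the identification of the chain recurrent set with the suspension of $\sigma(A)$ follows from Bowen's description of basic sets and the flow equivalence of $N$ and $A$.
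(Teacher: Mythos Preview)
Your overall strategy---use the connectedness of $\overline U$ to route finger moves that push $\mu(H_i)$ across $H_j$, then assemble these into an ambient isotopy supported in $\inte U_0$---is exactly the paper's (and Franks') approach. But the way you propose to handle the ``spurious intersections'' obstacle is not what actually works, and two related misunderstandings creep in.

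First, the paper does \emph{not} attempt to route the arcs so as to avoid crossing other handles. The arcs $\gamma_{ij}$ are made pairwise disjoint by an isotopy push-off, but each $\gamma_{ij}$ may well pass transversally through some $H_l$ with $l\neq i,j$; the key observation is that every such crossing contributes an \emph{even} number of new intersections of $\mu(W^u_i)$ with $W^s_l$. These even errors are then absorbed by invoking Proposition~\ref{pr8} a second time: one replaces $N$ by a flow-equivalent matrix, congruent to it mod $2$, with every entry large enough that the final ``$ij$-wiggle'' adjustments (each adding $2r$ intersections) can hit the target exactly. Your plan to route fingers through ``disjoint sub-rectangles'' so that the intersection count comes out exactly $N-\overline N$ with no spurious crossings is a genuinely harder geometric statement that you have not justified, and it is not needed.

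Second, you restrict the finger moves to pairs $(i,j)$ with $i\neq j$ and speak of ``leaving the diagonal entries---i.e.\ the number of periodic orbits $k$---untouched'' and of obtaining ``no new recurrence beyond the already-present periodic orbits.'' Both are wrong. All entries of $N$, diagonal included, are large (Proposition~\ref{pr8} only forces the off-diagonal ones to be even), so the $(i,i)$ entries must also be raised from $(\overline N)_{ii}\in\{0,1\}$ to $N_{ii}$; and the resulting chain recurrent set is the full suspension of $\sigma(N)$, a one-dimensional hyperbolic basic set with infinitely many periodic orbits, not merely the original $k$ closed orbits. Your step~(4) should therefore verify hyperbolicity of the handle set under the perturbed return map $\mu_1$ (the Markov conditions on $W^u$ and $W^s$) and conclude via Bowen that the invariant set is conjugate to $\sigma(N)$, hence flow-equivalent to $\sigma(A)$.
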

\begin{proof}
We have that $\overline{U}$ is connected. Hence, for $p\in \inte(\mu(W^u_i(p_i)\cap (\partial M_i\times I))$ and $q\in\inte H_j$ such that $W^u_j(q)$ and $\mu(H_\alpha)$ are disjoint $\forall \alpha\in\left\{1,\dots,e^-\right\}$, there exists a curve $\gamma:[0,3]\rightarrow \inte\overline{U}$  such that $\gamma(1)=p$ and $\gamma(2)=q$. Also, the intersection of $\mu(H_i)$ and $\gamma([0,3])$ is connected and equal to $\mu(W^s_i(\mu^{-1}(p)))$ and its intersection with $H_j$ lies in $W^u_j(q)$. For the other $H_l$'s with $l\in\left\{1,\dots,n\right\}-\left\{i,j\right\}$ and $\gamma([0,3])\cap H_l\neq\emptyset$, $\gamma$ is transversal to $W^u_l(p_l)$. See Figure~\ref{figure19}. 
\begin{figure}[h!]
\centering{\includegraphics[width=11cm]{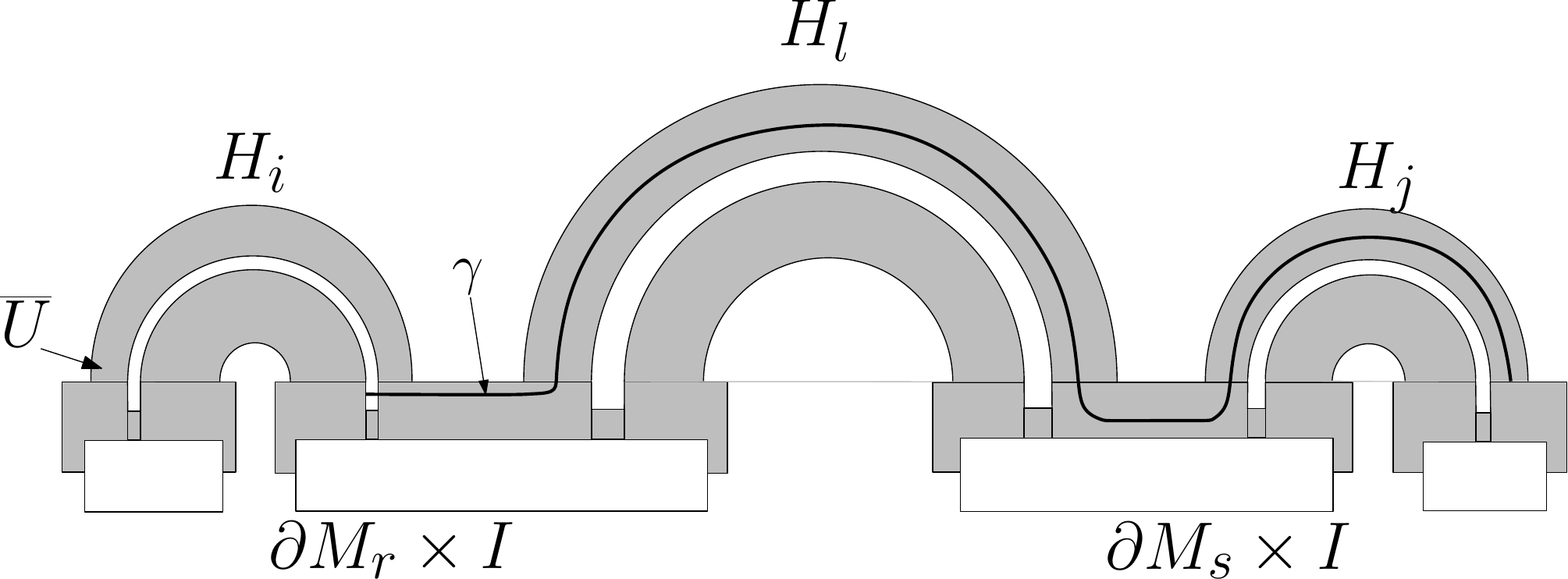}}
\caption{$H_l$ is a connecting handle.}\label{figure19}
\end{figure}

Thus, there is an isotopy supported in a tubular neighborhood of $\gamma([0,1])$ which pushes a small interval of $\mu(W^u_i(p_i))$ along the curve $\gamma$ until it intersects $W^s_j(p_j)$ in two points. It is possible that this process causes intersection with other $W^s_l(p_l)$ but always an even number of them. See Figure~\ref{figure20}.
\begin{figure}[h!]
\centering{\includegraphics[width=9cm]{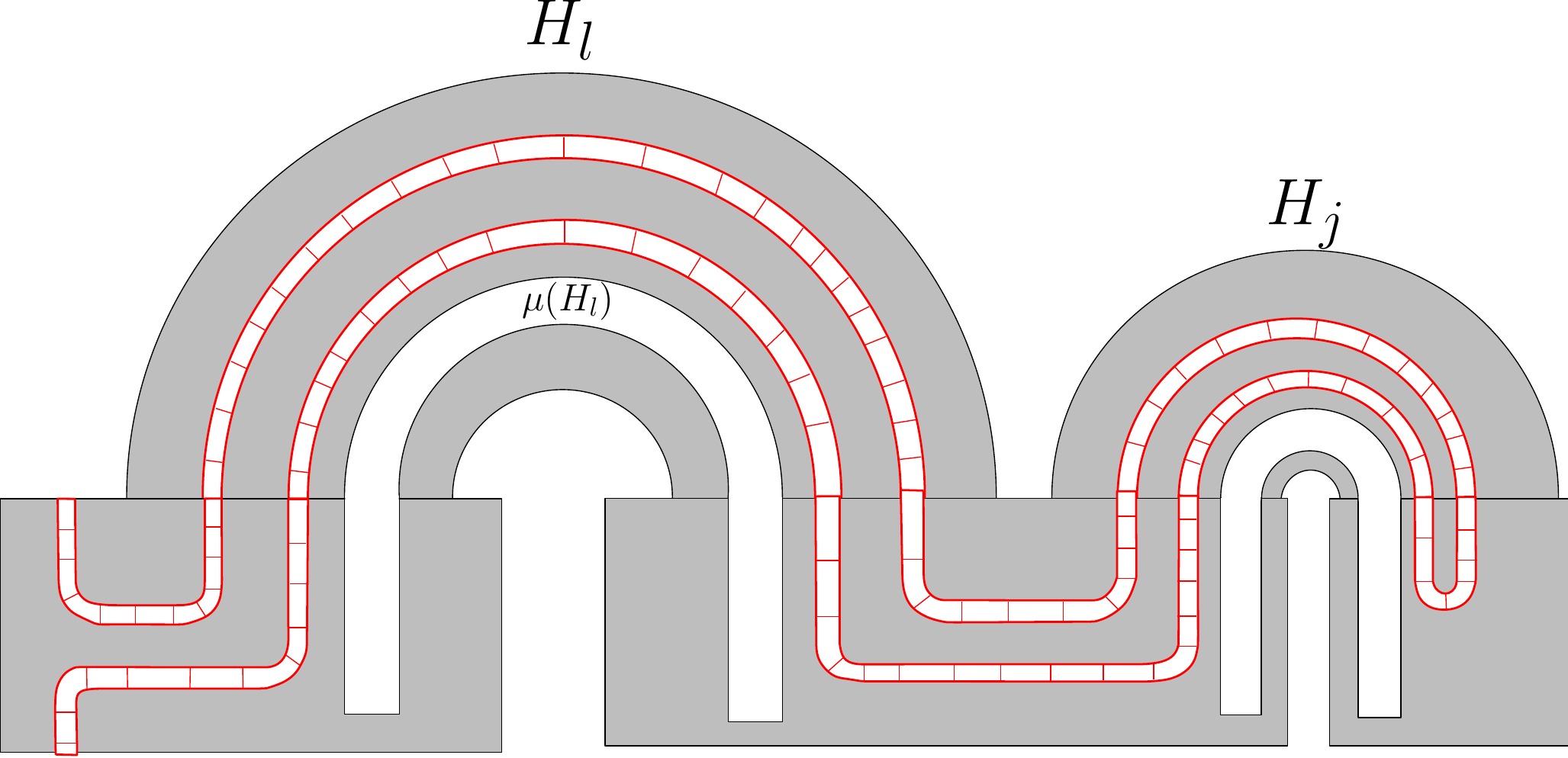}}
\caption{Deformation of $\phi_1(H_i)$.}\label{figure20}
\end{figure}

Note that this isotopy preserves the hyperbolicity of the one handles with respect to the first return map $\phi_1$. In others words, if $\mu(W^u_i(x))\cap W^u_j(y)\neq \emptyset$ then $W^u_j(y)\subset \mu(W^u_i(x))$ and similarly if $\mu(W^s_i(x))\cap W^s_j(y)\neq \emptyset$ then $\mu(W^s_i(x))\subset W^s_j(y)$ $\forall x\in H_i$ and $y\in H_j$.\\
Hence, to achieve the geometric intersection matrix $N$, which is flow equivalent to $A$, we argue as in \cite{franks1985nonsingular}. First one needs to find a pairwise disjoint family of embedded curves $\gamma_{ij}:[0,3]\rightarrow \overline{U}$ with the properties described above for $i,j\in\left\{1,\dots,n\right\}$. The existence of this family $\left\{\gamma_{ij}\right\}$ with these properties is easy to see except for the fact that they are  disjoint. Now, by using an isotopy supported on a neighborhood of $\gamma_{11}$ we push off all other curves. We do this for $\gamma_{12}'$ and in the same way for all the remaining curves.\\
By Proposition~\ref{pr8} we can, if necessary, replace the matrix $N$ for another matrix which is flow equivalent to it, congruent $\mo 2$ and has every entry as large as we want. In particular larger than the corresponding entry of $A$. We can suppose this has been done and for simplicity of notation continue to call this matrix $N$. Now for each $\gamma_{ij}$ we can increase the intersection points of $W^s_j(p_j)$ with the image of $W^u_i(p_i)$ if we push back the curve through $H_j$ again. See Figure~\ref{figure21}.
\begin{figure}[h!]
\centering{\includegraphics[width=9cm]{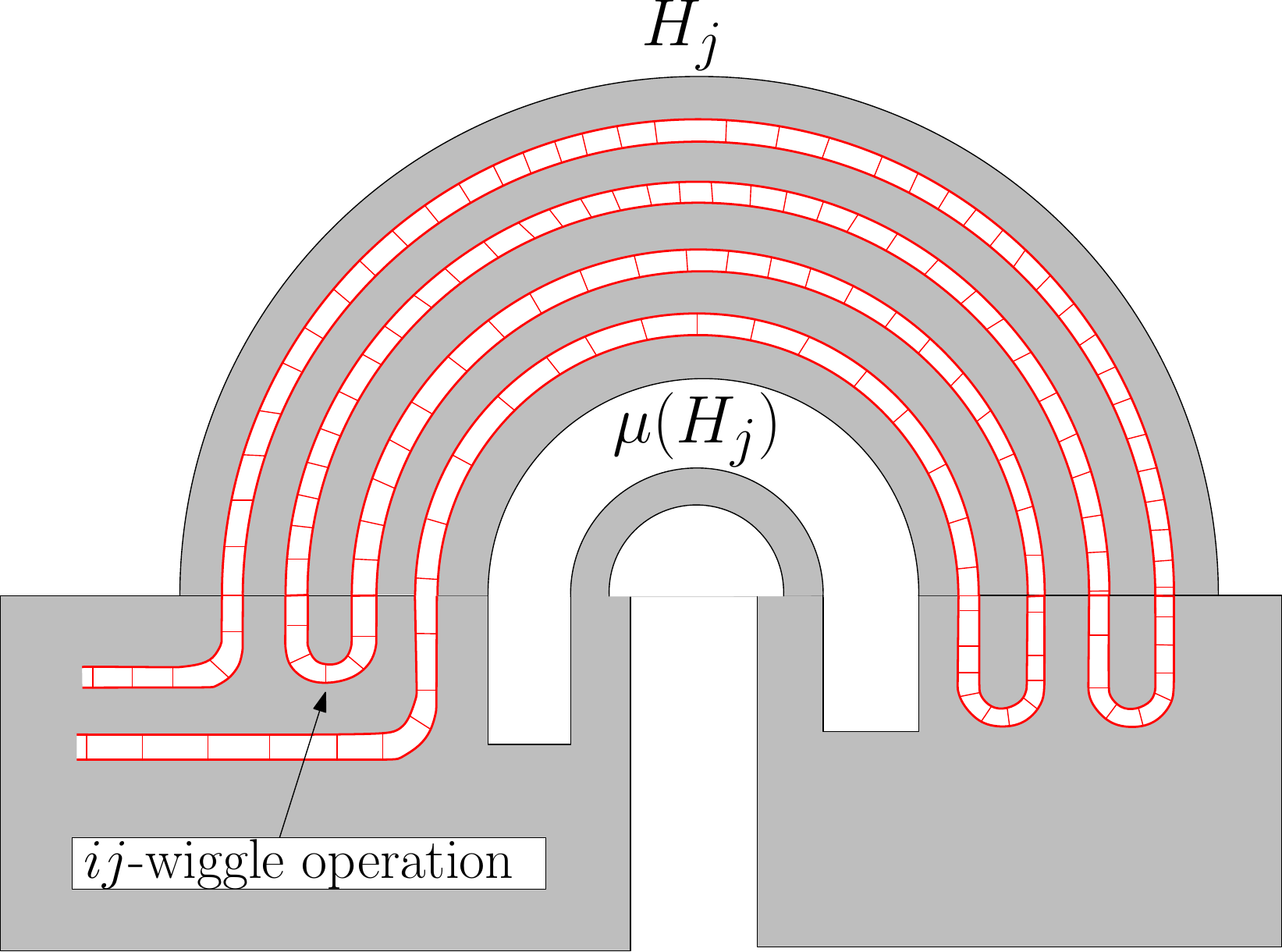}}
\caption{$ij-$wiggle operation.}\label{figure21}
\end{figure}
\\
This is referred to as the $ij-$wiggle operation. If this operation is repeated $r$ times, $2r+1$ intersection points are added. Do  this with $r=\frac{1}{2}[(A)_{ij}-(N)_{ij})]$ so that the $ij$ entry of the matrix $\overline{N}$ agrees with $(N)_{ij}$. Repeat this process in order to get to the matrix $N$. Now isotope $\mu$ so that the final map has the desired  property.
\end{proof}
\begin{proposition}\label{pr-suff} Let $A_{m\times m}$ be a non-negative integer matrix with $k=\dim \ke(\overline{(I-A)}F_2^m\rightarrow F_2^m$, where  $\overline{A}$ is the $\mo 2$ reduction of $A$. Suppose also that $e^+,e^-,g^+_j,g^-_i\in \N$, with $i=1,\dots,e^+$ and $j=1\dots,e^-$, are positive integers satisfying
\begin{enumerate}
\item $e^--e^+=\sum g^-_i-\sum g^+_j$,
\item $k-\sum g^-_i= e^+$ and
\item $k-\sum g^-_i= e^+$.
\end{enumerate}
Then there exists a Smale flow $\phi_t$ on $\ss$ with a basic block $X$ such that
\begin{enumerate}
\item The flow $\phi_t$ restricted to the basic set $\Lambda\subset X$ is topologically equivalent to the suspension of $\sigma(A)$, and
\item $\partial X^+(\partial X^-)$ has $e^+(e^-)$ components composed by surfaces of genus $g^+_i(g^-_j)$.
\item $(\ss)-X$ is homoemorphic to $H^{g^+_1}_1\sqcup\dots\sqcup H^{g^+_{e^+}}_{e^+}\sqcup H^{g^-_1}_1\sqcup\dots\sqcup H^{g^-_{e^-}}_{e^-}$ 
\end{enumerate}
\end{proposition}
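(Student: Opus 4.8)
The strategy is to fall back on the elementary model already constructed in Proposition~\ref{pr9} and then to reinstate the full dynamics of $\sigma(A)$ by an isotopy of the first return map supported in $\inte X$, so that every piece of topological data produced by Proposition~\ref{pr9} survives the modification. First I would invoke Proposition~\ref{pr8}: there is a matrix $N_{n\times n}$ flow equivalent to $A$ all of whose off-diagonal entries are even and with $\dim\ke(\overline{I-N})=\dim\ke(\overline{I-A})=k$ (when $A$ is already a diagonal matrix with $k$ ones and zeros elsewhere, take $N=A$ and there is nothing to do here). Then $\overline{N}$ is a diagonal matrix with exactly $k$ ones, so $\sigma(\overline{N})$ is precisely the elementary subshift treated in Proposition~\ref{pr9}; moreover conditions (2) and (3) are exactly the equalities $k-\sum g^-_i=e^+$ and $k-\sum g^+_j=e^-$ needed there, and, together with the Poincar\'e--Hopf relation (1), they make the handle counts consistent. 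Proposition~\ref{pr9} then produces a basic block $X\subset\ss$ for $\sigma(\overline{N})$ whose entering (exiting) boundary consists of $e^+\,(e^-)$ surfaces of genus $g^+_i\,(g^-_j)$ and for which $\ss-X$ is homeomorphic to $H^{g^+_1}_1\sqcup\dots\sqcup H^{g^+_{e^+}}_{e^+}\sqcup H^{g^-_1}_1\sqcup\dots\sqcup H^{g^-_{e^-}}_{e^-}$; recall this block has the form $X_1\cup R_1\cup(\text{nilpotent handles})$, where $R_1$ is the special round handle, and that Lemma~\ref{le18} and Corollary~\ref{co3} are what guarantee its boundary and its complement.

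Next I would keep $X$ fixed and change the flow only on $\inte X$. Following the set-up that precedes Lemma~\ref{le17}: inside each of the $k$ round handles $R_1,\dots,R_k$ of $X$ choose a one-handle $H_s$ with $H_s\times\mathbb{S}^1=R_s$, inside each of the $n-k$ nilpotent handles $N_t$ choose a one-handle $H_t$ with $H_t\times D^1=N_t$, and, using Proposition~\ref{pr10} to obtain smooth local return maps through the collarings of the handlebodies $\partial M_i$, assemble a connected surface $U\subset X$ transverse to the flow which contains all of these one-handles, together with a slightly larger surface $V\supset U$ on which the first return map $\mu\colon U\to V$ is smooth and $\overline{U}$ is connected. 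Then I would apply Lemma~\ref{le17}: there is an isotopy $\Psi_t\colon V\to V$, supported in the interior of a small neighborhood $U_0$ of $U$ and with $\Psi_0=\mathrm{id}$, such that the suspension flow of $\mu_1=\Psi_1\circ\mu$ has chain recurrent set topologically equivalent to the suspension of a subshift with geometric intersection matrix $N$, hence, by flow equivalence, topologically equivalent to the suspension of $\sigma(A)$. Since $\Psi_t$ and the passage to one-handles are supported in $\inte X$, the surfaces $\partial X^+,\partial X^-$ and the inward/outward transverse flow near $\partial X$ are untouched, so $X$ remains a basic block with the same boundary and the same complement in $\ss$.

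It then remains to read off the three conclusions and to close $X$ up to a global flow. Conclusion (1) is exactly Lemma~\ref{le17}: the basic set $\Lambda\subset X$ carries a flow topologically equivalent to the suspension of $\sigma(A)$. Conclusions (2) and (3) are inherited verbatim from Proposition~\ref{pr9}, since neither the choice of one-handles nor the isotopy $\Psi_t$ alters $X$, $\partial X$ or $\ss-X$. Finally, to get a global Smale flow on $\ss$ I would fill in the complementary handlebodies: on each entering piece $H^{g^+_i}_i$ put the time reversal of the handlebody Smale flow described just before Proposition~\ref{pr10} (transverse and outward on the boundary, with repelling periodic orbits and singularities, or a single source when $g^+_i=0$), and on each exiting piece $H^{g^-_j}_j$ that same flow directly (transverse and inward, with attracting periodic orbits and singularities); gluing these to $X$ along $\partial X$ yields a smooth flow $\phi_t$ on $\ss=X\cup(\ss-X)$ whose chain recurrent set is $\Lambda$ together with the periodic orbits and singularities in the handlebodies, hence hyperbolic of dimension $\le 1$, and which is transverse at the gluing surfaces. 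Thus $\phi_t$ is a Smale flow with $X$ the required basic block.

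The real content of the argument lives in the step imported from Lemma~\ref{le17}: producing a single connected transverse surface $U$ that meets every one-handle, and then isotoping $\mu$ --- via the $ij$-wiggle operation --- so that its geometric intersection matrix is exactly $N$ while every one-handle stays hyperbolic for the new return map, so that no spurious recurrence appears and the $k$ periodic orbits are not destroyed. This is precisely where the evenness of the off-diagonal entries of $N$ supplied by Proposition~\ref{pr8} is used (the wiggle operation controls intersection numbers only up to parity), and it is also where the topology of $\ss$ enters through the special round handle $R_1$, the analogous block being non-realizable in $\mathbb{S}^3$. Granting Lemma~\ref{le17}, the only thing still needing care here is bookkeeping --- checking that every surgery and every isotopy is genuinely supported in $\inte X$, away from $\partial X$, so that the genera of the boundary surfaces and the handlebody decomposition of $\ss-X$ furnished by Proposition~\ref{pr9} are preserved, and that the source/sink handlebody flows glue smoothly to the transverse boundary of $X$.
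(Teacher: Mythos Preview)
Your proposal is correct and follows essentially the same route as the paper: invoke Proposition~\ref{pr8} to pass to a flow-equivalent $N$ with even off-diagonal entries, apply Proposition~\ref{pr9} to $\overline N$ to obtain the block $X$ with the required boundary and complement, and then use Lemma~\ref{le17} to isotope the first return map inside $\inte X$ so that the chain recurrent set becomes the suspension of $\sigma(N)\sim\sigma(A)$ without disturbing $\partial X$ or $\ss-X$. Your write-up is in fact more explicit than the paper's in one respect---you spell out how to extend the flow over the complementary handlebodies $H^{g^\pm_i}_i$ using the model handlebody flows (and their time reversals) described before Proposition~\ref{pr10}, whereas the paper simply splices the new suspension vector field into a pre-existing ambient flow $\phi_t$ on $\ss$ and leaves the origin of that global flow implicit.
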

\begin{proof}

By Proposition \ref{pr8}, one knows that $A_{m\times m}$ is flow equivalent to $N_{n\times n}$. By Proposition~\ref{pr9}, we can build a basic block $X_1$ for $\overline{N}$,
which satisfies assertions $2$ and $3$ of this proposition. For the first assertion, by Lemma~\ref{le17}, one has that, for $U_0$ chosen sufficiently small, $\mu:U_0\rightarrow V$ is smooth and $\tau: U_0\rightarrow\re$ is the smallest $t>0$ such that $\phi_t(x)=\mu(x)$. Then the partial flow on
$$
Z=\left\{\phi_t(x)|x\in U_0,\, 0\leq t\leq \tau(x)\right\}.
$$
is the suspension flow for $\mu$. Now by the construction, the suspension flow for $\mu_1$ is the same as for $\mu$ since $\mu$ and $\mu_1$ are isotopic. On the other hand, since the isotopy is supported on the interior of $U_0$, one has that $\mu$ and $\mu_1$ agree near the boundary of $U_0$. Then near the boundary of $Z$ the suspension flow of $\mu_1$, $\varphi_t$ and the suspension flow of $\mu$, $\phi_t$ agree. Now, let $\eta_t$ be a flow on $\ss$ which is generated by the vector field which is tangent to the suspension flow of $\mu_1$ on $Z$ and tangent to $\phi_t$ elsewhere. Hence $\eta_t$ is a Smale flow which satisfies  this proposition.
\end{proof}

\textbf{Proof of the sufficiency of the conditions of Theorem~\ref{MainTheorem}.}

The sufficient conditions of  Theorem~\ref{MainTheorem} will be presented in two parts $\beta(L)=1$ and $\beta(L)=0$.\\

\textbf{Case} $\beta(L)=1$:\\

Let $L$  be an abstract Lyapunov graph with a cycle that satisfies the conditions (1), (2), (3a) and (4). Suppose $a$ is an edge in the cycle of $L$ with weight $g$. Consider the graphs $L_1$ and $L_2$ each with a dangling edge as shown in Figure~\ref{figure8}.
\begin{figure}[h!]
\centering\includegraphics[scale=0.5]{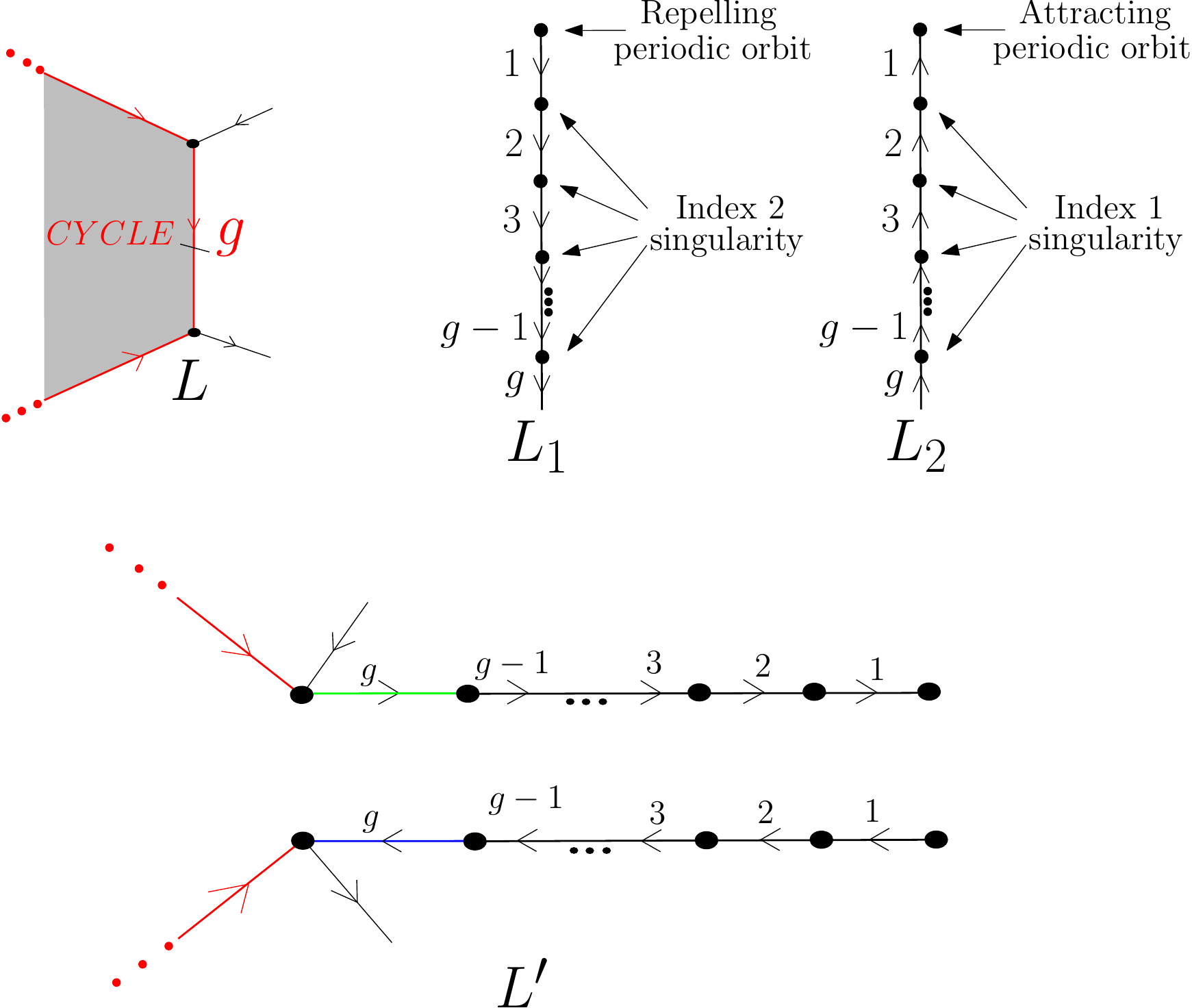}
\caption{The graphs $L_1$ and $L_2$ have a dangling edge.}\label{figure8}
\end{figure}

Now cut $L$ along $a$ and glue the graphs $L_1$ and $L_2$ by the dangling edges, as shown in Figure~\ref{figure8}. Then a new abstract Lyapunov graph $L'$ is obtained such that, $L'$ is a tree and each vertex satisfies the conditions in Theorem~\ref{teorem1}. Therefore, there exists a Smale flow $\psi_{t}$ on $S^{3}$ with Lyapunov graph $L'$ and such that $L_1$ and $L_2$ are associated to two handlebodies $H^g_1$ and $H^g_2$, whose boundaries are unlinked  in $\mathbb{S}^3$, as shown in Figure~\ref{figure4}.

\begin{figure}[h!]
\centering\includegraphics[scale=0.5]{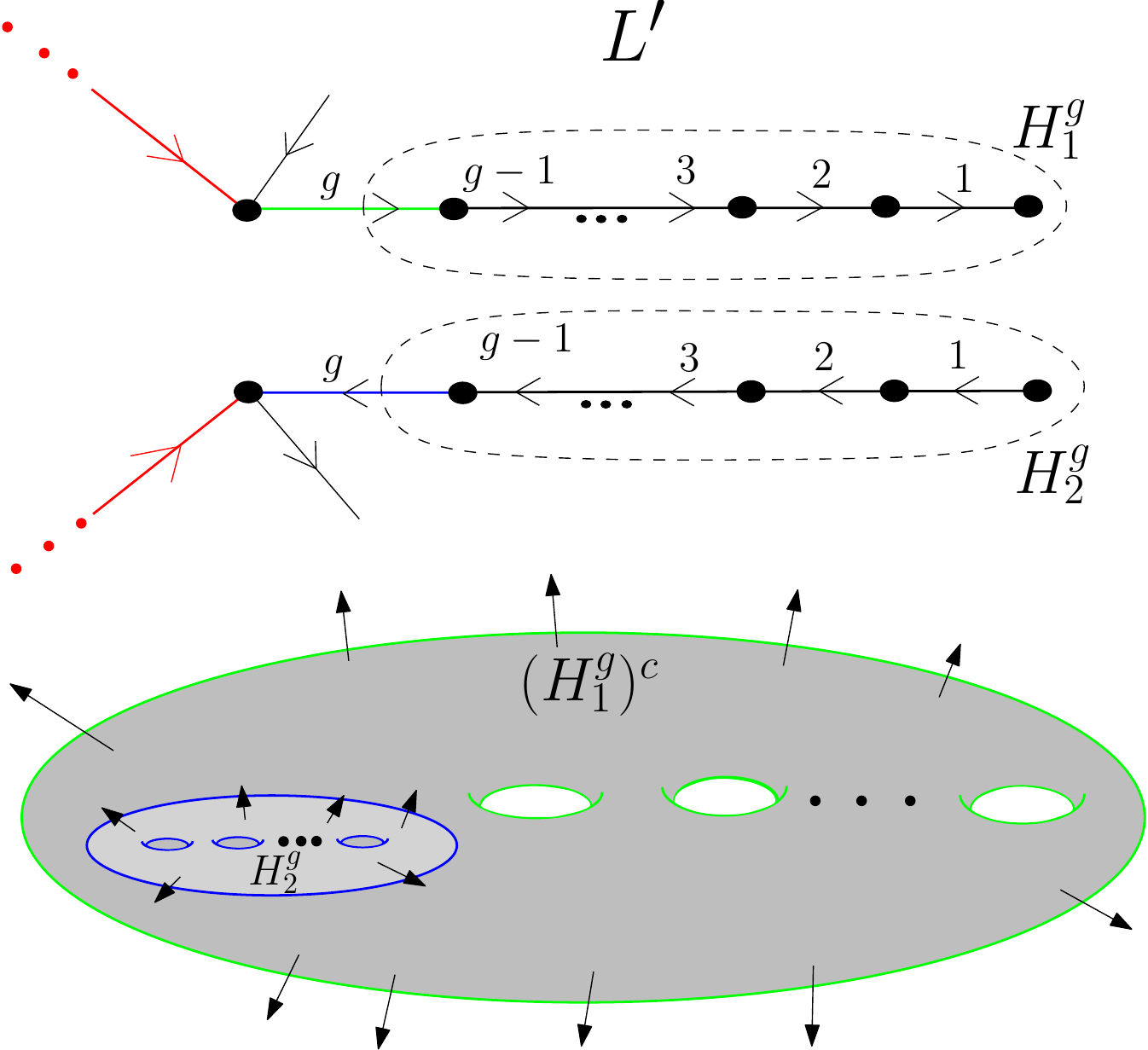}
\caption{$L'$ corresponds to a Smale flow on $\mathbb{S}^3$}.\label{figure4}
\end{figure}

 Now we cut the two neighborhoods $N(H^g_1)$ and $N(H^g_2)$ of $H^g_1$ and $H^g_2$ respectively. Then gluing $S^{3}-\left(N(H^g_1)\sqcup N(H^g_2)\right)$ along $\partial N(H^g_1)$ and $\partial N(H^g_2)$ suitably, we obtain a Smale flow $\varphi_t$ on $\ss$ with Lyapunov graph $L$.\\

\textbf{Case} $\beta(L)=0$: In this case we will consider two possibilities: (3b)i and (3b)ii.\\

Let $L$ be an abstract Lyapunov tree with satisfies (1), (2), (3b)ii and (4).\\ 

By the condition (3b)ii, there is an edge $a$ of $L$, such that the weight of $a$ is $g>0$. Now we cut $L$ along $a$ and glue the graphs $L_1$ and $L_2$ by the dangling edges as shown in Figure~\ref{figure25}. 

\begin{figure}[h!]
\centering\includegraphics[scale=0.6]{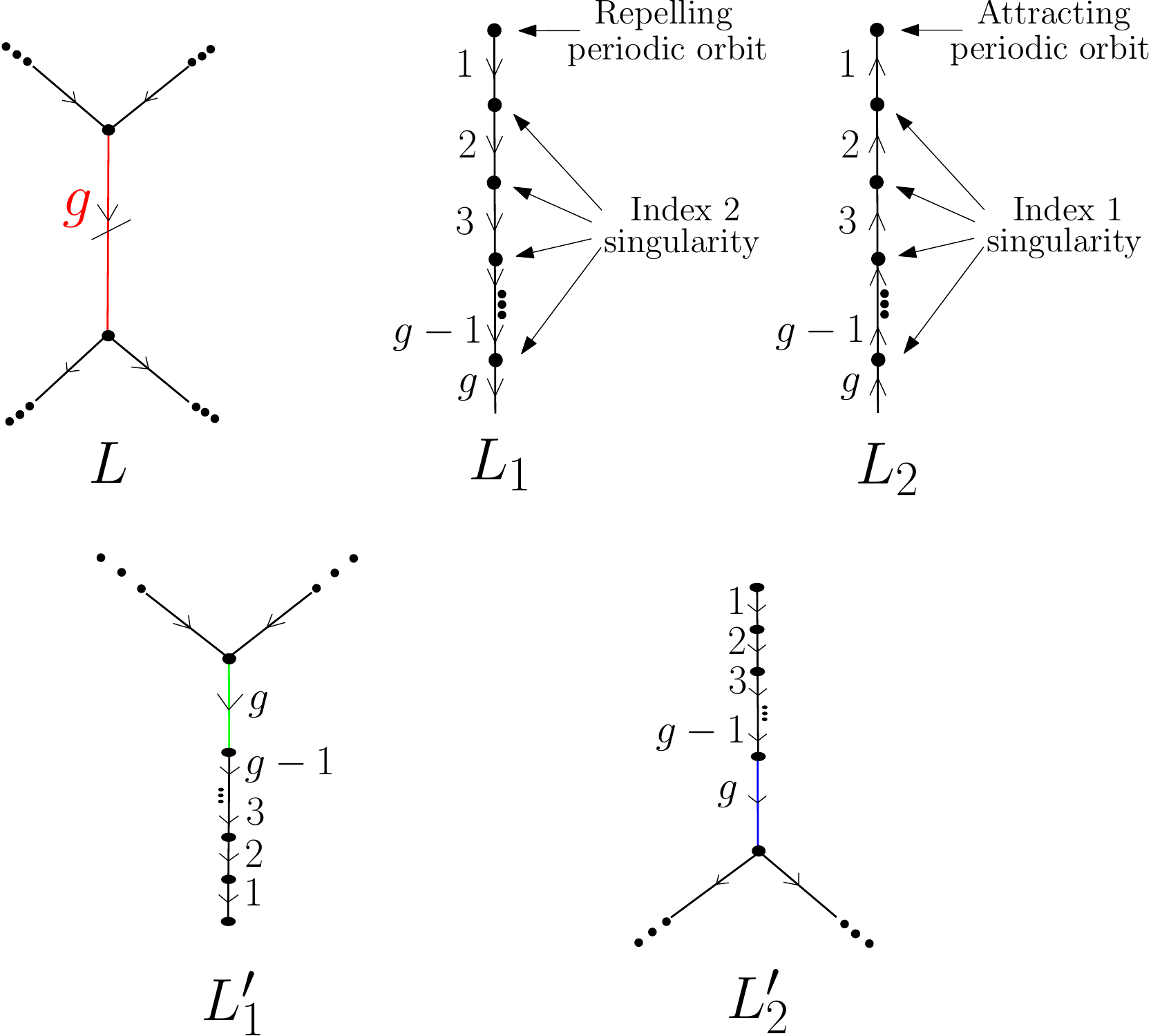}
\caption{The graphs $L_1$ and $L_2$ have dangling edge.}\label{figure25}
\end{figure}

The new graphs $L_1^\prime$ and $L_2^\prime$ are obtained. Since, $L_1^\prime$ and $L_2^\prime$ satisfies the conditions of Theorem~\ref{teorem1}, there exists Smale flows $\psi_t$  and $\phi_ t$ on $\mathbb{S}^3$, such that the graph  $L_1$ corresponds to a handlebody manifold $H^g_1$ and the graph  $L_2$ corresponds to a handlebody manifold $H^g_2$, as shown in Figure~\ref{figure26}. 

\begin{figure}[h!]
\centering\includegraphics[scale=0.6]{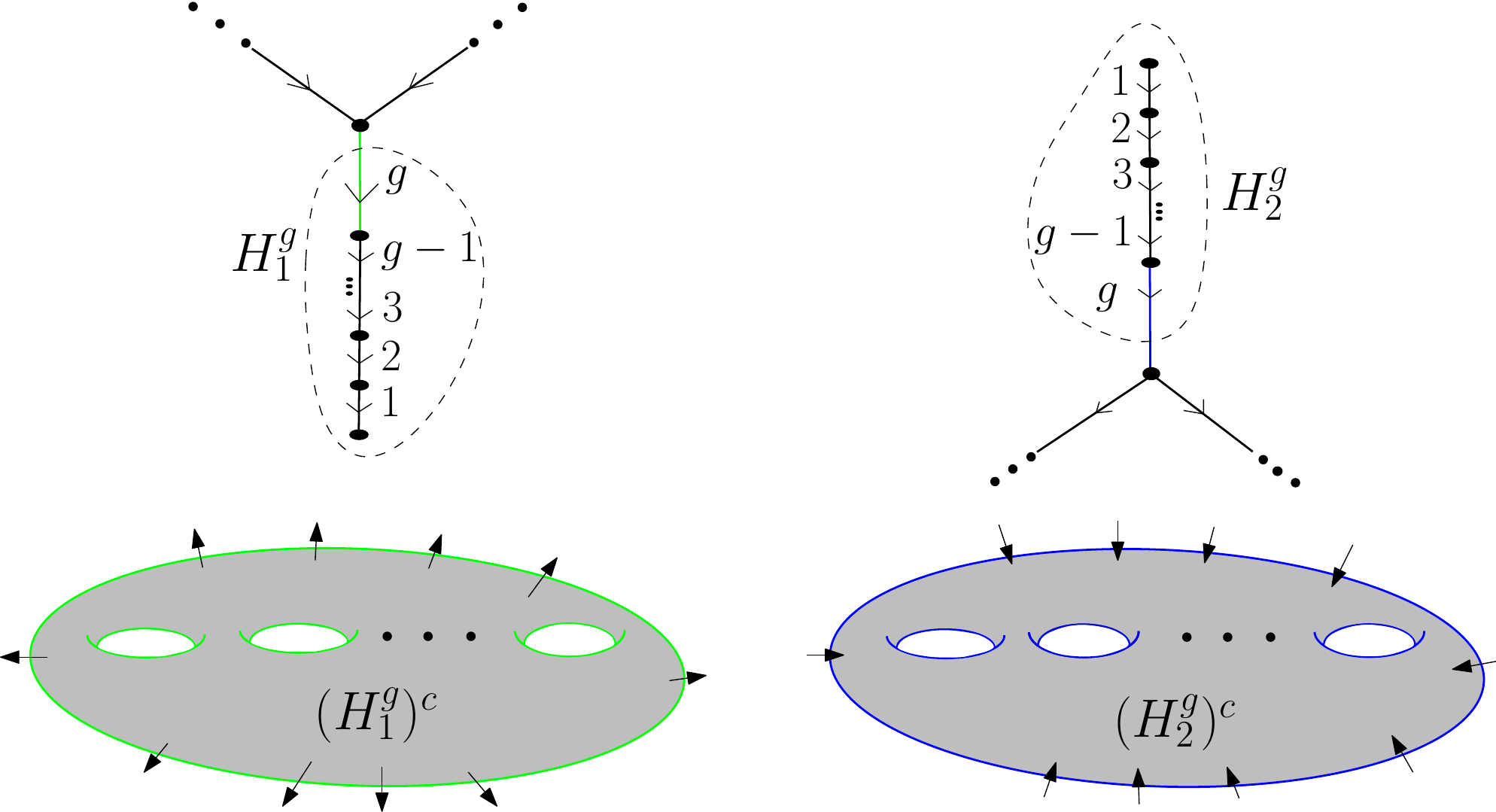}
\caption{The graphs $L'_1$ and $L'_2$ correspond to Smale flows on $\mathbb{S}^3$.}\label{figure26}
\end{figure}

This implies that $L_2$ corresponds to a handlebody manifold $H^g_2$ as well. 
Then we glue $((H^g_1)^c,\psi_t|_{(H^g_1)^c})$ and $((H^g_2)^c,\psi_t|_{(H^g_2)^c})$ suitably along their boundaries. We obtain a Smale flow $\xi_t$ on $\ss$ with Lyapunov graph $L$.\\

Let $L$ be an abstract Lyapunov tree which satisfies (1), (2), (3b)i and (4).\\

Let $v$ be the unique vertex that verifies the first condition of (3b)i. By Proposition~\ref{pr-suff}, there exists a Smale flow $\phi_t$ on $\ss$ and a basic block $X$ embedded in $\ss$, associated to the vertex $v$. Also, $\ss-X$ is homeomorphic to $H^{g^+_1}_1\sqcup\dots\sqcup H^{g^+_{e^+}}_{e^+}\sqcup H^{g^-_1}_1\sqcup\dots\sqcup H^{g^-_{e^-}}_{e^-}$. Now, if we cut $L$ along all incoming and outgoing edges incident to the vertex $v$, we obtain $e^++e^-$ subgraphs with dangling edges which can be denoted by $L^+_1,\dots,L^+_{e^+}$ and $L^-_1\dots,L^-_{e^-}$, as shown in Figure~\ref{figure27}.
\begin{figure}[h!]
\centering{\includegraphics[width=9cm]{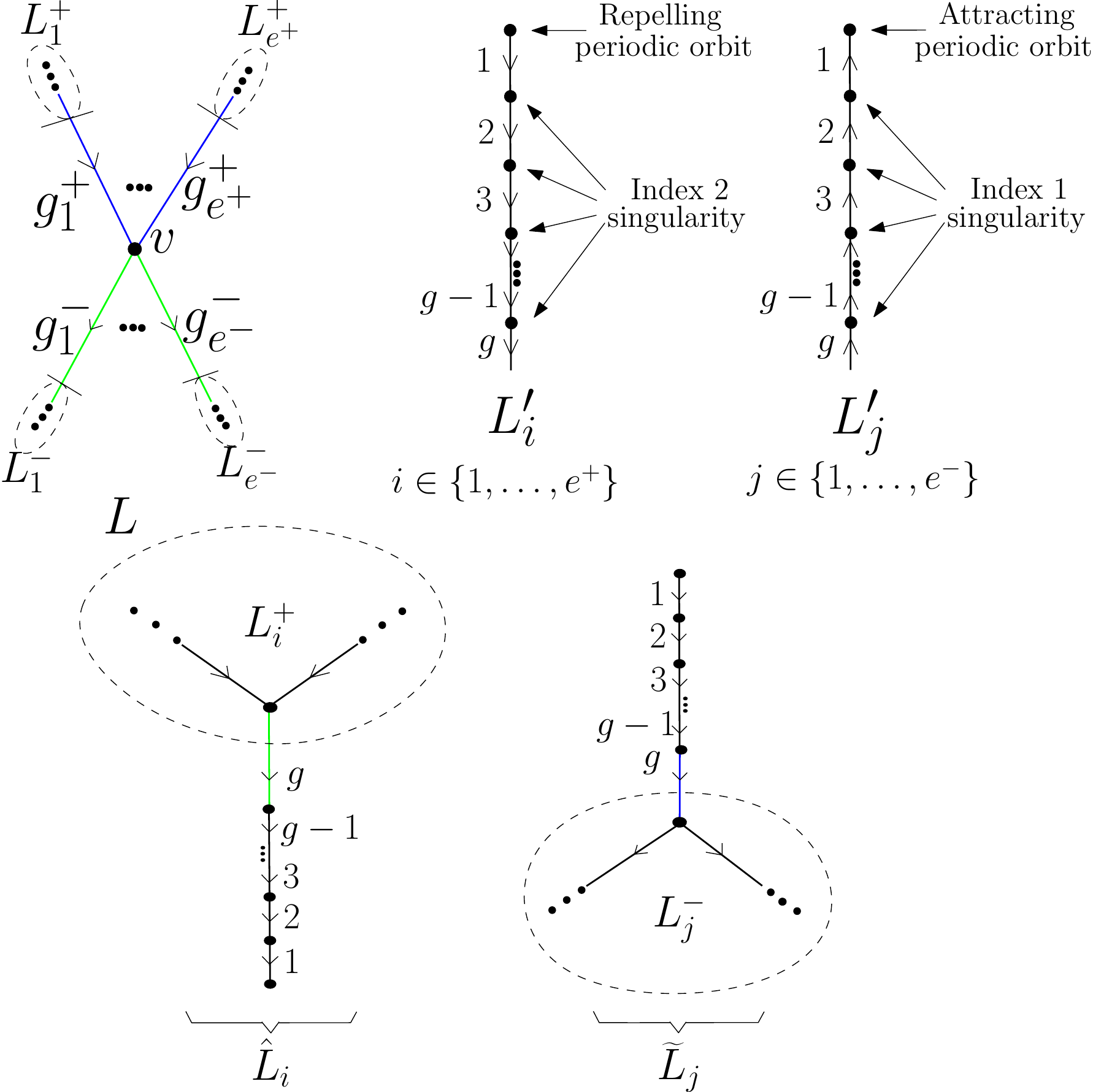}}
\caption{The graphs $L'_i$ and $L'_j$ have a dangling edge.}\label{figure27}
\end{figure}
 We use the graphs $L'_i$ and $L'_j$ with dangling edges to create the new graphs $\hat{L}_1,\dots,\hat{L}_{e^+}$ and $\widetilde{L}_1\dots,\widetilde{L}_{e^-}$. Then by Proposition~\ref{pr5} and Theorem~\ref{teorem1}, there exist Smale flows $\phi^i_t$ and $\varphi^j_t$ on $\mathbb{S}^3$ such that $L^+_i$ and $L^-_j$ correspond to handlebodies $H^+_i$ and $H^-_j$ respectively, as shown in Figure~\ref{figure28}.
\begin{figure}[h!]
\centering{\includegraphics[width=10cm]{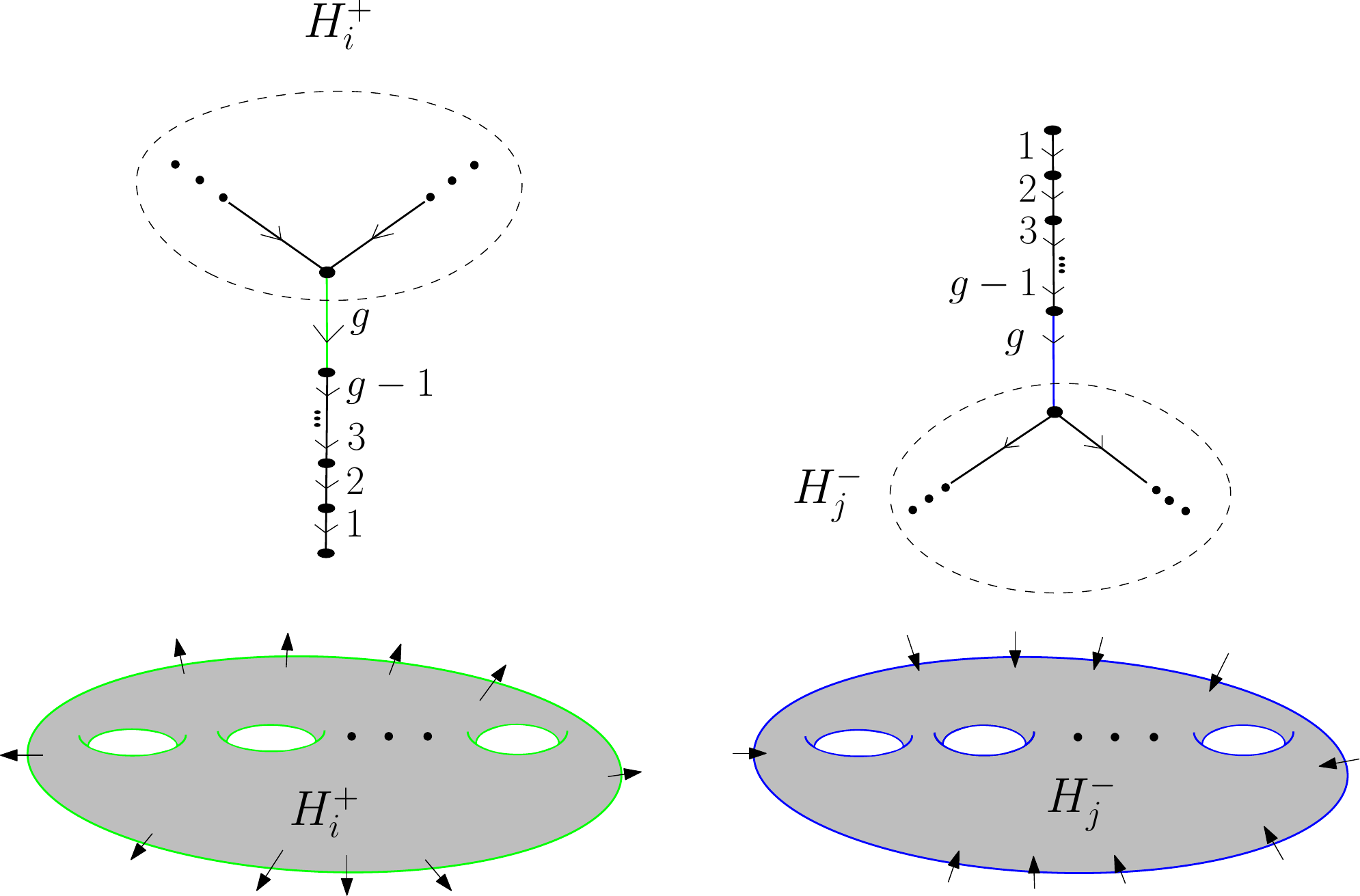}}
\caption{The graphs $\hat{L}_i$ and $\widetilde{L}_j$ correspond to Smale flows on $\mathbb{S}^3$.}\label{figure28}
\end{figure} 
  Then we have constructed Smale flows on $(X,\phi_t|_X)$, $(H^+_i,\phi^i_t|_{H^+_i})$ and $(H^-_j,\varphi^i_t|_{H^-_j})$ for $i\in\left\{1,\dots,e^+\right\}$ for $j\in\left\{1,\dots,e^-\right\}$. Finally, we glue these manifolds suitably, in order to obtain a Smale flow on $\ss$ with Lyapunov graph $L$. 

\cqd
  
\begin{center}{\sc acknowledgments}
\end{center}

The authors would like to thank the referee for reading the paper carefully and providing many helpful suggestions in organizing the layout of the main result and consequently improving its readability.

\vspace{0.3cm}

{\sc Department of Mathematics, Institute of Mathematics,
Statistics and Scientific
Computation, Unicamp, Campinas, S\~ao Paulo, Brazil

\vspace{0.3cm}

\sc Institute of Sciences Mathematics and of Computation,
University Of S\~ao Paulo, S\~ao Carlos, S\~ao Paulo, Brazil

}


\begin{thebibliography}{99}

\bibitem[1]{beguin2002flots} F. B{\'e}guin and Ch. Bonatti,
\emph{Flots de {S}male en dimension 3: pr{\'e}sentations finies de voisinages invariants d'ensembles selles}, Topology \textbf{41}(2002),no. 1, 579-602.  

\bibitem[2]{bonatti2000knots} B. Bonatti and Z. Grines,
\emph{Knots as topological invariants for gradient-like diffeomorphisms of the sphere $S^{3}$}, Journal of dynamical and control systems \textbf{6}(2000), no. 4, 579-602.  

\bibitem[3]{bowen1972} R. Bowen,
\emph{One-dimensional hyperbolic sets for flows}, Journal of Differential Equations\textbf{12}(1972), no. 1, 173-179. 

\bibitem[4]{bowen1977homology} R. Bowen and J. Frank, 
\emph{Homology for zero-dimensional nonwandering sets}, The Annals of Mathematics \textbf{106}(1977), no. 1, 73-92. 

\bibitem[5]{conley1978isolated} C. Conley,
\emph{Isolated invariant sets and the Morse index}, no. 38, AMS Bookstore, 1978.

\bibitem[6]{cruz1998cycle} R. Cruz and K. de Rezende,
\emph{Cycle rank of Lyapunov graphs and the genera of manifolds.} Proceedings of the American Mathematical Society, \textbf{126}(1998), no. 12, 3715-3720.

\bibitem[7]{de1987smale} K. de Rezende, 
\emph{Smale flows on the three-sphere}, Transactions of the American Mathematical Society \textbf{303}(1987), no. 1, 283-310.

\bibitem[8]{de1993gradient} K. de Rezende, 
\emph{Gradient-like flows on surfaces}, Ergodic Theory and Dynamical Systems, \textbf{13}(1993), no. 3, 557-580. MR1245829(91j:58146)
  
\bibitem[9]{franks1982} J. Frank,
\emph{Homology and dynamical systems}, no. 49, American Mathematical Soc., 1982.

\bibitem[10]{franks1985nonsingular} J. Frank, 
\emph{Non-singular Smale Flows on $S^{3}$}, Topology \textbf{24}(1985), no. 3, 265-282. 

\bibitem[11]{smale1967} S. Smale, 
\emph{Differentiable dynamical systems}, Bulletin of the American mathematical Society \textbf{73}(1967), no. 6, 747-817.  

\bibitem[12]{wilson1969} W. Wilson,
\emph{Smoothing derivatives of nonsingular Smale flows on $\mathbb{S}^1\times\mathbb{S}^2$}, Transactions of the American Mathematical Society (1969), 283-310. 

\bibitem[13]{yu2012} B. Yu,
\emph{Lyapunov graphs of nonsingular Smale flows on $S^{2}\times S^{1}$}, Transactions of the American Mathematical Society \textbf{365}(2012), no. 2, 767-783.

\vspace{0.3cm}

{\sc Department of Mathematics, Institute of Mathematics,
Statistics and Scientific
Computation, Unicamp, Campinas, S\~ao Paulo, Brazil

\vspace{0.3cm}

\sc Institute of Sciences Mathematics and of Computation,
University Of S\~ao Paulo, S\~ao Carlos, S\~ao Paulo, Brazil

}
\end{thebibliography}
\end{document}